\newtheorem{thm}{Theorem}[section]
\newtheorem{dfn}[thm]{Definition}
\newtheorem{lem}[thm]{Lemma}
\newtheorem{cor}[thm]{Corollary}
\newtheorem{prop}[thm]{Proposition}
\newtheorem{ass}[thm]{Assumption}
\theoremstyle{definition}
\newtheorem{rem}[thm]{Remark}
\newtheorem{construction}[thm]{Construction}
\newcommand{\qp}{\mathbb{Q}_p}
\newcommand{\cp}{\mathbb{C}_p}
\newcommand{\ocp}{\mathcal{O}_{\mathbb{C}_p}}
\newcommand{\zp}{\mathbb{Z}_p}
\newcommand{\ok}{\mathcal{O}_K}
\newcommand{\kbar}{\overline{K}}
\newcommand{\kinf}{K_{\infty}}
\newcommand{\okinf}{\mathcal{O}_{K_{\infty}}}
\newcommand{\karith}{K_{\mathrm{arith}}}
\newcommand{\kcycl}{K_{\mathrm{cycl}}}
\newcommand{\okcycl}{\mathcal{O}_{K_{\mathrm{cycl}}}}
\newcommand{\kcan}{K_{\mathrm{can}}}
\newcommand{\gk}{G_K}
\newcommand{\cc}[3]{\mathrm{H}_{\mathrm{cont}}^{#1}({#2},{#3})}
\newcommand{\coh}[3]{\mathrm{H}^{#1}({#2},{#3})}
\newcommand{\CH}[2]{\mathrm{H}(#1,#2)}
\newcommand{\CC}[2]{\mathrm{H}_{\mathrm{cont}}({#1},{#2})}
\newcommand{\ee}{\varepsilon}
\newcommand{\G}{\mathrm{Rep}_{K_{\infty}}\Gamma}
\newcommand{\IG}{\mathrm{Rep}^{\mathrm{irr}}_{K_{\infty}}\Gamma}
\newcommand{\GGG}{\mathrm{Rep}_K\Gamma}
\newcommand{\GA}{\mathrm{Rep}_{K_{\mathrm{arith}}}\Gamma^{\mathrm{arith}}}
\newcommand{\GZ}{\mathrm{Rep}^{\mathbb{Z}}_{K_{\infty}}\Gamma}
\newcommand{\GGZ}{\mathrm{Rep}^{\mathbb{Z}}_{K}\Gamma}
\newcommand{\GG}{\mathrm{Rep}_{K_{\infty}}\Gamma^{\mathrm{geom}}}
\newcommand{\Gcycl}{\mathrm{Rep}_{K_{\mathrm{cycl}}}{\Gamma^{\mathrm{cycl}}}}
\newcommand{\IGcycl}{\mathrm{Rep}^{\mathrm{irr}}_{K_{\mathrm{cycl}}}{\Gamma^{\mathrm{cycl}}}}
\newcommand{\GCK}{\mathrm{Rep}_K{\Gamma^{\mathrm{cycl}}}}
\newcommand{\GC}{\mathrm{Rep}_{\mathbb{C}_p}G_K}
\newcommand{\GCZ}{\mathrm{Rep}^{\mathbb{Z}}_{\mathbb{C}_p}G_K}
\newcommand{\GCZp}{\mathrm{Rep}^{\mathbb{Z}'}_{\mathbb{C}_p}G_K}
\newcommand{\LL}{\mathrm{Rep}\ \!\mathfrak{g}_{K_{\infty}}}
\newcommand{\LG}{\mathrm{Rep}\ \!\mathfrak{g}^{\mathrm{geom}}_{K_{\infty}}}
\newcommand{\LK}{\mathrm{Rep}\ \!\mathfrak{g}_K}
\newcommand{\IL}{\mathrm{Rep}^{\mathrm{irr}}\ \!\mathfrak{g}_{K_{\infty}}}
\newcommand{\LZ}{\mathrm{Rep}^{\mathbb{Z}}\ \!\mathfrak{g}_{K_{\infty}}}
\newcommand{\LZp}{\mathrm{Rep}^{\mathbb{Z}'}\ \!\mathfrak{g}_{K_{\infty}}}
\newcommand{\LLZ}{\mathrm{Rep}^{\mathbb{Z}}\ \!\mathfrak{g}_{K}}
\newcommand{\LLZp}{\mathrm{Rep}^{\mathbb{Z}'}\ \!\mathfrak{g}_{K}}
\newcommand{\ILA}{\mathrm{Rep}^{\mathrm{irr}}\ \!\mathfrak{g}^{\mathrm{arith}}_{K_{\infty}}}
\newcommand{\LAA}{\mathrm{Rep}\ \!\mathfrak{g}^{\mathrm{arith}}_{K_{\mathrm{arith}}}}
\newcommand{\LLCK}{\mathrm{Rep}\ \!\mathfrak{g}^{\mathrm{cycl}}_{K}}
\newcommand{\Lcycl}{\mathrm{Rep}\ \!{\mathfrak{g}^{\mathrm{cycl}}_{K_{\mathrm{cycl}}}}}
\newcommand{\RL}{\mathrm{Rep}\ \!\mathfrak{g}}
\newcommand{\R}{\Gamma}
\newcommand{\RG}{\Gamma^{\mathrm{geom}}}
\newcommand{\RA}{\Gamma^{\mathrm{arith}}}
\newcommand{\RC}{\Gamma^{\mathrm{cycl}}}
\newcommand{\Ll}{\mathfrak{g}_{K_{\infty}}}
\newcommand{\Lg}{\mathfrak{g}^{\mathrm{geom}}_{K_{\infty}}}
\newcommand{\Lc}{\mathfrak{g}^{\mathrm{cycl}}_{K_{\mathrm{cycl}}}}
\newcommand{\Lk}{\mathfrak{g}_K}
\newcommand{\LLl}{\mathfrak{g}}
\newcommand{\LLg}{\mathfrak{g}^{\mathrm{geom}}}
\newcommand{\LLa}{\mathfrak{g}^{\mathrm{arith}}}
\newcommand{\LLc}{\mathfrak{g}^{\mathrm{cycl}}}
\newcommand{\LgK}{\mathfrak{g}^{\mathrm{geom}}_K}
\newcommand{\LcK}{\mathfrak{g}^{\mathrm{cycl}}_K}
\newcommand{\ET}[2]{\mathrm{Ext}_{\mathcal{T}}^n(#1,#2)}
\newcommand{\PP}[1]{\partial (#1)}
\newcommand{\DS}[1]{D_{\mathrm{Sen}}(#1)}
\newcommand{\DSS}{D_{\mathrm{Sen}}}
\newcommand{\rr}[1]{\gamma_{#1}}
\newcommand{\xx}[1]{\mathfrak{X}_{#1}}
\newcommand{\VEC}[1]{\underline{\mathrm{Vec}}_{#1}}
\newcommand{\V}[1]{V_{[#1]}}
\newcommand{\HD}{\widehat{\Omega}_K^1}
\newcommand{\isom}[1]{{#1}/\negthickspace\sim}
\newenvironment{enuroman}{%
\begin{enumerate}%
}{\end{enumerate}}
\begin{document}

\title[A note on Sen's theory]{A note on Sen's theory in the imperfect residue field case}
\author{Shun Ohkubo}
\date{}
\address{Graduate School of Mathematical Sciences, University of Tokyo, 3-8-1 Komaba, Meguro-ku, Tokyo 153-8914, Japan}
\email{shuno@ms.u-tokyo.ac.jp}
\maketitle

\begin{abstract}
In Sen's theory in the imperfect residue field case, Brinon defined a functor from the category of $\cp$-representations to the category of linear representations of certain Lie algebra. We give a comparison theorem between the continuous Galois cohomology of $\cp$-representations and the Lie algebra cohomology of the associated representations. The key ingredients of the proof are Hyodo's calculation of Galois cohomology and the effaceability of Lie algebra cohomology for solvable Lie algebras.
\end{abstract}

\section*{Introduction}
Let $p$ be a prime and let $K$ be a CDVF of characteristic $(0,p)$ with residue field $k_K$ satisfying $[k_K:k_K^p]<\infty$. Let $\kbar$ be an algebraic closure of $K$ and let $\cp$ be the $p$-adic completion of $\kbar$. Let $t_1,\dotsc,t_d\in K$ be a lift of a $p$-basis of $k_K$ and let $\kinf=K(\mu_{p^{\infty}},t_1^{p^{-\infty}},\dotsc,t_d^{p^{-\infty}})$. Let $\gk=G_{\kbar/K}$ and $\R=G_{\kinf/K}$.

In \cite{Bri}, Brinon generalized Sen's theory to the imperfect residue field case. Let us recall it briefly. In his theory, he first established a canonical equivalence of categories
\[
\GC\to\G
\]
where $\GC$ is the category of $\cp$-representations of $\gk$ and $\G$ is the category of $\kinf$-representations of $\R$.
Then, he defined a functor
\[
\G\to\LL
\]
where $\LL$ is the category of $\kinf$-linear representations of $\LLl=\mathrm{Lie}(\R)$. Let $\DSS$ be the composite of these two functors. He also proved that, for a $\cp$-representation $V$, the canonical map
\[
\kinf\otimes_K\cc{0}{\gk}{V}\to\coh{0}{\Ll}{\DS{V}}
\]
is an isomorphism. Here, the LHS is the continuous Galois cohomology and the RHS is the Lie algebra cohomology.

The aim of this paper is to extend this isomorphism to that of $\delta$-functors:
\begin{thm}[Main Theorem]\label{thm:Main}
There exists a canonical isomorphism of $\delta$-functors
\[
\kinf\otimes_K\CC{\gk}{-}\cong\CH{\Ll}{\DS{-}}.
\]
\end{thm}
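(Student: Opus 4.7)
The strategy is to promote Brinon's degree-zero isomorphism to all degrees via a universal-$\delta$-functor argument. Both
\[
F^{\bullet}(V) := \kinf \otimes_K \cc{\bullet}{\gk}{V} \quad \text{and} \quad G^{\bullet}(V) := \coh{\bullet}{\Ll}{\DS{V}}
\]
are cohomological $\delta$-functors from $\GC$ to $\kinf$-vector spaces: for $F^{\bullet}$ because $\kinf/K$ is flat and continuous Galois cohomology yields long exact sequences, and for $G^{\bullet}$ because $\DSS$ is exact, being the composition of Brinon's equivalence $\GC \simeq \G$ with the Lie algebra differentiation functor $\G \to \LL$. The iso $F^0 \cong G^0$ is the content of Brinon's theorem recalled above. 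The plan is to show that both $F^{\bullet}$ and $G^{\bullet}$ are effaceable on $\GC$; once this is done, Grothendieck's universality criterion produces a unique extension of the degree-zero iso to an iso of $\delta$-functors.

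Effaceability of $G^{\bullet}$ is the first key ingredient from the abstract. The Lie algebra $\Ll = \mathrm{Lie}(\R)$ is solvable---in fact metabelian---since $\R$ is an extension of the abelian cyclotomic group $\mathrm{Gal}(K(\mu_{p^{\infty}})/K)$ by the abelian Kummer subgroup $\mathrm{Gal}(\kinf/K(\mu_{p^{\infty}}))$. For a solvable Lie algebra over a characteristic-zero field, Lie algebra cohomology is effaceable in the category of all modules via the coinduced construction $\mathrm{Hom}_K(U(\Ll), W)$; to produce effacements that genuinely arise from $\GC$, one proceeds by induction along the derived filtration of $\Ll$ using the Hochschild--Serre spectral sequence, reducing to one-dimensional steps whose effacement can be realised inside $\G$ (hence, via Brinon's equivalence, inside $\GC$).

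Effaceability of $F^{\bullet}$ is the second ingredient, and is where Hyodo's computation of $\cc{*}{\gk}{\cp(\chi)}$ for $\R$-characters $\chi$ enters. Combining Brinon's equivalence $\GC \simeq \G$ with the Hochschild--Serre spectral sequence for $1 \to \hk \to \gk \to \R \to 1$ reduces the computation of $\cc{*}{\gk}{V}$ to the $\R$-cohomology of the associated $\kinf$-representation of $V$ (after appropriate completion). Hyodo's explicit description of the higher cohomology classes in terms of K\"ahler differentials and the cyclotomic character then allows one to construct, for each $V$ and $n \geq 1$, an enlargement $V \hookrightarrow V'$ in $\GC$ killing the relevant classes: after triangularising the $\R$-action via Lie's theorem for the solvable $\Ll$, the problem reduces to characters, where the required effacements can be written down explicitly from Hyodo's formulas.

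The main obstacle is the Galois-side effacement: Hyodo's calculation describes the higher cohomology but does not immediately yield effacements, so careful bookkeeping is required to produce monomorphisms $V \hookrightarrow V'$ inside $\GC$ (and not merely in some auxiliary category such as $\Ghat$) whose cohomology vanishes. A related subtle point is the compatibility of these effacements with the completion-and-descent underlying Brinon's equivalence, which one verifies using the density of $\kinf$ in $\widehat{\kinf}$. Once both effacements are in hand, the two universalities combine with Brinon's degree-zero isomorphism to yield the theorem.
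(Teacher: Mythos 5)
Your overall architecture differs from the paper's in a way that leaves a genuine gap. The paper only ever proves effaceability of \emph{one} side, namely the Lie-algebra-side $\delta$-functor $\CH{\Ll}{\DS{-}}$ (via the decomposition $\GC\cong\GCZ\times\GCZp$, a chain of category equivalences reducing to $\LLZ$, and the Koszul--Leger lemma for solvable Lie algebras). Universality of that one effaceable $\delta$-functor yields a morphism $\iota:\CH{\Ll}{\DS{-}}\to\kinf\otimes_K\CC{\gk}{-}$ extending Brinon's degree-zero isomorphism, and the theorem is then proved by checking that $\iota$ is an isomorphism: in degree $1$ by an injectivity argument on Yoneda Ext groups (matching Hyodo's generators $\log\chi$ and $\ee^{s_j}$ with the explicit Lie-algebra cocycles $\delta_0,\delta_j$), and in higher degrees by cup-product compatibility together with the fact that both $\cc{q}{\gk}{\cp(n)}$ and $\coh{q}{\Lk}{K[\xx{0}]/(\xx{0}-n)}$ are generated by cup products of degree-one classes, plus the vanishing of both sides on the non-integer part. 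Effaceability of the Galois side is recorded only afterwards, as a \emph{consequence} of the Main Theorem.

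Your plan instead requires effaceability of $\kinf\otimes_K\CC{\gk}{-}$ on $\GC$ as an input, and this is precisely the step you do not supply. Saying that Hyodo's formulas ``allow one to construct'' enlargements $V\hookrightarrow V'$ in $\GC$ killing the higher cohomology, and that ``careful bookkeeping is required,'' defers the entire difficulty: Hyodo's theorem tells you that $\cc{q}{\gk}{\cp(q)}$ and $\cc{q}{\gk}{\cp(q-1)}$ are nonzero of dimension $\binom{d}{q}$, but it gives no mechanism for embedding $\cp(n)$ into a $\cp$-representation on which the map in degree $q$ vanishes; pushing out a single degree-one extension along its own inclusion handles individual classes in degree one, but for $q\ge 2$ and for arbitrary $V$ no such construction is written down, and I see no way to produce it short of already knowing the comparison with Lie algebra cohomology (which is how the paper ultimately obtains it). Unless you can prove Galois-side effaceability independently, your argument is circular at this point. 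The fix is to run the universality argument asymmetrically, as the paper does: efface only the Lie-algebra side, and then verify that the resulting morphism of $\delta$-functors is an isomorphism degree by degree, which requires the degree-one Ext-injectivity statement and the cup-product compatibility that your proposal omits.
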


The main part of the paper is from $\S \ref{sec:Gen}$ to $\S \ref{sec:Pro}$. In $\S \ref{sec:App}$, we point out errors in \cite{Bri} and show how to fix them. We decide to include it since results of \cite{Bri} are used in the paper.

\section*{Acknowledgements}
The author would like to acknowledge the continuing guidance and encouragement of my advisor Professor Atsushi Shiho. Several helpful discussions with Professor Tsuji Takeshi and Professor Olivier Brinon are gratefully acknowledged. The author was supported by Global COE Program of University of Tokyo.
\section*{Notation and convention}
Let $p$ be a prime. Unless a particular mention is stated, let $K$ be a CDVF of mixed characteristic $(0,p)$, $k_K$ be the residue field of $K$ and assume that $[k_K:k_K^p]=p^d<\infty$. Denote by $\ok,\pi_K$ the integer ring and a uniformizer of $K$. Fix an algebraic closure $\kbar$ of $K$, let $\cp$ be a $p$-adic completion of $\kbar$ and let $G_K=G_{\kbar/K}$.

For a profinite group $G$, denote by $\mathrm{H}_{\mathrm{cont}}(G,-)$ the continuous Galois cohomology of $G$. For a Lie algebra $\mathfrak{g}$, denote by $\mathrm{H}(\mathfrak{g},-)$ the Lie algebra cohomology of $\mathfrak{g}$. For an abelian category $\mathcal{A}$, denote by $\mathrm{Ext}_{\mathcal{A}}$ the Yoneda's Ext group in $\mathcal{A}$ (put $\mathrm{Ext}_{\mathcal{A}}^0=\mathrm{Hom}_{\mathcal{A}}$). Note that if $F:\mathcal{A}\to\mathcal{A}'$ is an additive, exact functor of abelian categories, then $F$ induces a morphism of $\delta$-functors $\mathrm{Ext}_{\mathcal{A}}(A,-)\to\mathrm{Ext}_{\mathcal{A}'}(F(A),F(-))$ for $A\in\mathcal{A}$.

For a commutative topological field $R$ and a profinite groups $G$, which continuously acts on $R$, an $R$-representation of $G$ is a finite dimensional $R$-vector spaces with continuous semilinear $G$-action. Let $\mathrm{Rep}_{R}G$ be the Tannakian category of $R$-representations of $G$. $V\in\mathrm{Rep}_{R}{G}$ is admissible if $V\cong R^{\dim_{R}{V}}$ in $\mathrm{Rep}_{R}{G}$, that is, $\dim_{R^G}{V^G}=\dim_{R}{V}$. We have a canonical isomorphism $\cc{0}{G}{V}\cong\mathrm{Ext}^{0}_{\mathrm{Rep}_RG}(R,V)$ for $V\in\mathrm{Rep}_RG$ and we also have a canonical identification $\cc{1}{G}{V}\cong\mathrm{Ext}^{1}_{\mathrm{Rep}_RG}(R,V)$ which sends a $1$-cocycle $s$ to the extension
\[
\xymatrix{0\ar[r]&V\ar[r]^(.4){\iota}&V\oplus R\ar[r]^(.6){\pi}&R\ar[r]&0,}
\]
where the $G$-action on $V\oplus R$ is given by $g(v,r)=(g(v)+s(r),g(r)).$

For a field $K$, let $\mathrm{Vec}_K\ (\mathrm{resp.}\ \VEC{K})$ be the category of $K$-vector spaces (resp. finite dimensional $K$-vector spaces). For a finite dimensional Lie algebra $\LLl$ over $K$, let $\RL$ be the Tannakian category of finite dimensional linear representations of $\LLl$ over $K$.

For a Tannakian category $\mathcal{T}$, denote a unit object by $1_{\mathcal{T}}$ and the dual of $X\in\mathcal{T}$ by $X^{\vee}$. For Tannakian categories $\mathcal{T}_1,\mathcal{T}_2$, a functor $F:\mathcal{T}_1\to\mathcal{T}_2$ is called a functor of Tannakian categories if $F$ is an additive, exact, $\otimes$-functor in usual sense and commutes with ${}^\vee$.

For an abelian category $\mathcal{A}$, denote by $\mathcal{A}^{\mathrm{irr}}$ the full subcategory of irreducible objects of $\mathcal{A}$. $\isom{\mathcal{A}}$ denotes the class of the isomorphism classes of the irreducible objects of $\mathcal{A}$. In this paper, we always assume that $\isom{\mathcal{A}}$ is a set. Let $Z(\mathcal{A})$ be the free $\mathbb{Z}$-module with the basis $\isom{\mathcal{A}}$. In the case that the objects of $\mathcal{A}$ have finite length, let $l:\mathcal{A}\to Z(\mathcal{A})$ be the additive map sending $A\in\mathcal{A}$ to the formal sum of its Jordan-H\"{o}lder factors and let $\# l:\mathcal{A}\to\mathbb{Z}$ be the usual length function.
\section{General lemmas and the Lie algebra cohomology}\label{sec:Gen}
The following lemmas will be used later and the readers can skip this section for a while.
\begin{lem}\label{lem:tan}
Let $\mathcal{T}$ be a Tannakian category. Then, there exists canonical isomorphisms \[\mathrm{Ext}_{\mathcal{T}}^n(X\otimes Y,Z)\cong\mathrm{Ext}_{\mathcal{T}}^n(Y,X^{\vee}\otimes Z),\quad n\ge 0\] for $X,Y,Z\in\mathcal{T}$.
\end{lem}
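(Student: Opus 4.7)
The plan is to lift the standard tensor--Hom adjunction in the rigid symmetric monoidal category $\mathcal{T}$ to the Yoneda $\mathrm{Ext}$ groups. For $n=0$ the claim is the familiar adjunction
\[
\mathrm{Hom}_{\mathcal{T}}(X \otimes Y, Z) \cong \mathrm{Hom}_{\mathcal{T}}(Y, X^\vee \otimes Z),
\]
built from the evaluation $\mathrm{ev}: X^\vee \otimes X \to 1_{\mathcal{T}}$ and coevaluation $\mathrm{coev}: 1_{\mathcal{T}} \to X \otimes X^\vee$. So it remains to treat $n \geq 1$.

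The crucial input is that, because $X$ is dualizable, the endofunctor $X \otimes (-)$ has $X^\vee \otimes (-)$ as both a left and a right adjoint; consequently both $X \otimes (-)$ and $X^\vee \otimes (-)$ preserve all kernels and cokernels and are exact on $\mathcal{T}$. They therefore act termwise on $n$-fold Yoneda extensions. Given $\xi \in \mathrm{Ext}_{\mathcal{T}}^n(X \otimes Y, Z)$ represented by
\[
0 \to Z \to E_{n-1} \to \cdots \to E_0 \to X \otimes Y \to 0,
\]
I would apply $X^\vee \otimes (-)$ termwise and then pull the resulting $n$-extension back along the unit $\eta_Y: Y \to X^\vee \otimes X \otimes Y$; this defines a map $\Phi_n : \mathrm{Ext}^n_{\mathcal{T}}(X \otimes Y, Z) \to \mathrm{Ext}^n_{\mathcal{T}}(Y, X^\vee \otimes Z)$. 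Symmetrically, tensoring a representative of a class in $\mathrm{Ext}^n_{\mathcal{T}}(Y, X^\vee \otimes Z)$ with $X$ and pushing forward along the counit $\epsilon_Z: X \otimes X^\vee \otimes Z \to Z$ defines the reverse map $\Psi_n$.

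The final step, where the essential bookkeeping lies, is to check that $\Phi_n$ and $\Psi_n$ are mutually inverse and natural in $X, Y, Z$. Mutual inverseness reduces to the triangle identities for the adjunction $X \otimes (-) \dashv X^\vee \otimes (-)$: the composite $\Psi_n \circ \Phi_n$ unfolds into a pullback-then-pushout along $\eta_Y$ and $\epsilon_Z$, and the triangle identity collapses this to the identity on the Yoneda class (and symmetrically for $\Phi_n \circ \Psi_n$). I expect no genuine obstacle here, since this is the general principle that an adjoint pair of exact functors between abelian categories induces a natural isomorphism on Yoneda Ext; the only delicate point is verifying well-definedness on equivalence classes, which follows from the functoriality of the tensor operation.
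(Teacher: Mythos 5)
Your proposal is correct and follows essentially the same route as the paper: the paper's map $\phi$ is your $\Phi_n$ (tensor the $n$-extension with $X^{\vee}$ termwise and pull back along the unit $Y\to X^{\vee}\otimes X\otimes Y$), and its inverse $\psi$ is your $\Psi_n$ (tensor with $X$ and push out along the counit $X\otimes X^{\vee}\otimes Z\to Z$). The only difference is that you make explicit the role of the triangle identities in verifying that the two maps are mutually inverse, a point the paper leaves as ``easy to check.''
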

\begin{proof}
We can assume $n>0$. We construct a canonical map $\phi:\ET{X\otimes Y}{Z}\to\ET{Y}{X^{\vee}\otimes Z}$ and an inverse $\psi$ as follows: For an exact sequence
\[\xymatrix{
\mathcal{E}:0\ar[r]&Z\ar[r]&W_{n-1}\ar[r]&\cdots\ar[r]&W_0\ar[r]&X\otimes Y\ar[r]&0,
}\]
define $\phi([\mathcal{E}]):=[\phi(\mathcal{E})]\in\ET{Y}{X^{\vee}\otimes Z}$ as the pull-back diagram
\[\xymatrix{
\phi(\mathcal{E}):0\ar[r]&X^{\vee}\otimes Z\ar[r]\ar@{=}[d]&W'_{n-1}\ar@{}[rd]|{\ulcorner}\ar[r]\ar[d]&\cdots\ar@{}[d]|{\cdots}\ar[r]\ar@{}[rd]|{\ulcorner}&W'_0\ar[r]\ar[d]\ar@{}[rd]|{\ulcorner}&Y\ar[r]\ar[d]&0\\
X^{\vee}\otimes\mathcal{E}:0\ar[r]&X^{\vee}\otimes Z\ar[r]&X^{\vee}\otimes W_{n-1}\ar[r]&\cdots\ar[r]&X^{\vee}\otimes W_0\ar[r]&X^{\vee}\otimes X\otimes Y\ar[r]&0
}\]
where the right vertical map is induced by the canonical map $1_{\mathcal{T}}\to X^{\vee}\otimes X.$ In the same way, for an exact sequence
\[\xymatrix{
\mathcal{E}':0\ar[r]&X^{\vee}\otimes Z\ar[r]&W'_{n-1}\ar[r]&\cdots\ar[r]&W'_0\ar[r]&Y\ar[r]&0,
}\]
define $\psi([\mathcal{E}']):=[\psi(\mathcal{E}')]\in\ET{X\otimes Y}{Z}$ as the push-out diagram
\[\xymatrix{
X\otimes\mathcal{E}':0\ar[r]&X\otimes X^{\vee}\otimes Z\ar[r]\ar[d]\ar@{}[rd]|{\lrcorner}&X\otimes W'_{n-1}\ar[r]\ar[d]\ar@{}[rd]|{\lrcorner}&\cdots\ar@{}[d]|{\cdots}\ar[r]\ar@{}[rd]|{\lrcorner}&X\otimes W'_0\ar[r]\ar[d]&X\otimes Y\ar[r]\ar@{=}[d]&0\\
\psi(\mathcal{E}'):0\ar[r]&Z\ar[r]&W_{n-1}\ar[r]&\cdots\ar[r]&W_0\ar[r]&X\otimes Y\ar[r]&0
}\]
where the left vertical map is induced by the canonical map $X\otimes X^{\vee}\to1_{\mathcal{T}}$. It is easy to check that $\phi$ and $\psi$ are well-defined and inverse to each other.
\end{proof}

\begin{lem}\label{lem:dec}
Let $\mathcal{A}$ be an abelian category whose objects have finite length. Let $Z$ be a subset of $\isom{\mathcal{A}}$ such that
\[
\mathrm{Ext}^i_{\mathcal{A}}(z,z')=0,\mathrm{Ext}^{i}_{\mathcal{A}}(z',z)=0\ \text{ for }i=0,1,\text{ for all }z\in Z,z'\in Z'
\]
with $Z'=(\isom{\mathcal{A}})\setminus Z$. Let $\mathcal{A}^{Z}$ $(\text{resp. }\mathcal{A}^{Z'})$ be the full subcategory of $\mathcal{A}$ whose objects are extensions of objects of $Z$ $(\text{resp. }Z')$. 
For $A\in\mathcal{A}$, choose a maximal subobject $A^Z\hookrightarrow A$ belonging to $\mathcal{A}^Z$ and let $p^Z(A)=A^Z$ and let $p^{Z'}(A)=A^{Z'}\in\mathcal{A}^{Z'}$ similarly.
Then, $\mathcal{A}^Z$ $(\text{resp. }\mathcal{A}^{Z'})$ is an exact, abelian subcategory of $\mathcal{A}$ and $p^{Z}:\mathcal{A}\to\mathcal{A}^{Z}$ $(\text{resp. }p^{Z'}:\mathcal{A}\to\mathcal{A}^{Z'})$ is an additive, exact functor of abelian categories and the functor
\[
(p^{Z},p^{Z'}):\mathcal{A}\to\mathcal{A}^{Z}\times\mathcal{A}^{Z'}
\]
induces an equivalence of abelian categories.
\end{lem}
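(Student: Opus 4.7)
The heart of the lemma is producing a canonical direct sum decomposition $A \cong A^Z \oplus A^{Z'}$ for every $A \in \mathcal{A}$; the other assertions follow formally. A straightforward d\'evissage first upgrades the hypothesis to $\mathrm{Ext}^i_{\mathcal{A}}(A, A') = \mathrm{Ext}^i_{\mathcal{A}}(A', A) = 0$ for $i = 0, 1$, $A \in \mathcal{A}^Z$, $A' \in \mathcal{A}^{Z'}$: induct on $\# l(A) + \# l(A')$, and for a simple subobject $S \subset A$ apply the long exact sequence attached to $0 \to S \to A \to A/S \to 0$ in the first argument (and then symmetrically for $A'$), the base case being the hypothesis itself.

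For the decomposition, I would induct on $\# l(A)$. Pick a simple $S \subset A$; after possibly swapping $Z \leftrightarrow Z'$, assume $S \in Z$. By induction write $A/S = \overline{B} \oplus \overline{B}'$ with $\overline{B} \in \mathcal{A}^Z$, $\overline{B}' \in \mathcal{A}^{Z'}$, and let $F, E \subset A$ be their preimages. Then $F \in \mathcal{A}^Z$ automatically (since $F/S = \overline{B}$ and $S \in Z$), whereas the extension $0 \to S \to E \to \overline{B}' \to 0$ splits by the $\mathrm{Ext}^1$-vanishing just proved, producing a subobject $D' \subset E \subset A$ with $D' \cong \overline{B}'$. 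An intersection argument yields $F \cap D' \in \mathcal{A}^Z \cap \mathcal{A}^{Z'} = 0$ (its Jordan--H\"older factors would have to lie in $Z \cap Z' = \emptyset$) and $F + D' = A$ (visible modulo $S$, since $\overline{B} + \overline{B}' = A/S$), so $A = F \oplus D'$.

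The remaining assertions are formal. Uniqueness of the decomposition and functoriality of $A \mapsto A^Z$ follow from the Hom-vanishing: any $f : A \to A'$ carries $A^Z$ into $(A')^Z$ because the composite $A^Z \to A' \twoheadrightarrow (A')^{Z'}$ lies in $\mathrm{Hom}(\mathcal{A}^Z, \mathcal{A}^{Z'}) = 0$, and the analogous statement for $A^{Z'}$ is symmetric. The subcategories $\mathcal{A}^Z, \mathcal{A}^{Z'}$ are closed under kernels and cokernels computed in $\mathcal{A}$ (Jordan--H\"older factors cannot escape $Z$ or $Z'$), so they are exact abelian subcategories of $\mathcal{A}$. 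The functor $(p^Z, p^{Z'})$ is essentially surjective by construction, and the Hom-decomposition $\mathrm{Hom}_{\mathcal{A}}(A, A') \cong \mathrm{Hom}(A^Z, (A')^Z) \oplus \mathrm{Hom}(A^{Z'}, (A')^{Z'})$ coming from Hom-vanishing makes it fully faithful, hence an equivalence of abelian categories; in particular $p^Z$ and $p^{Z'}$ are exact.

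The principal obstacle is the existence step, where one has to choose the correct pull-back to split: splitting the extension $0 \to S \to F \to \overline{B} \to 0$ would be useless because both $S$ and $\overline{B}$ sit in $\mathcal{A}^Z$, whereas splitting $0 \to S \to E \to \overline{B}' \to 0$ succeeds precisely because $S \in Z$ and $\overline{B}' \in \mathcal{A}^{Z'}$, invoking the upgraded $\mathrm{Ext}^1$-vanishing. Once this piece is correctly placed and intersected with $F$, everything after is formal bookkeeping in an abelian category.
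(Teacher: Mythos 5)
Your proof is correct and follows essentially the same strategy as the paper's: upgrade the $\mathrm{Ext}^{0,1}$-vanishing from simple objects to all of $\mathcal{A}^Z\times\mathcal{A}^{Z'}$ by d\'evissage, then obtain $A=A^Z\oplus A^{Z'}$ by induction on length via a splitting forced by that vanishing, with the remaining claims formal consequences. The only organizational difference is that you quotient by a simple subobject and reassemble, whereas the paper quotients by $A^Z$ itself and splits $0\to A^Z\to A\to (A/A^Z)^{Z'}\to 0$ in one step; both work, and you may wish to note explicitly that the summand $F$ you construct equals the chosen maximal subobject $A^Z$ (immediate from Hom-vanishing and maximality).
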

\begin{proof}
Obviously, $\mathcal{A}^Z,\mathcal{A}^{Z'}$ are exact, abelian subcategories of $\mathcal{A}$. Since $A^Z\hookrightarrow A$ is a maximum subobject satisfying the condition, any given morphism $f:A_1\to A_2$ induces a unique morphism $p^Z(f):A_1^Z\to A_2^Z$ which is compatible with the injections $A_1^Z\hookrightarrow A_1,A_2^Z\hookrightarrow A_2$. Indeed, the injection $f(A_1^Z)\hookrightarrow A_2$ uniquely factors thorough $A_2^Z\hookrightarrow A_2$, the composite $A_1^Z\to f(A_1^Z)\to A_2^Z$ is the desired morphism. This implies that $p^Z$ is an additive functor and $p^{Z'}$ is also an additive functor by the same argument. The rest of the assertion follows from
\[
\mathrm{Ext}^{i}_{\mathcal{A}}(z,z')=0,\mathrm{Ext}^{i}_{\mathcal{A}}(z',z)=0\ \text{for}\ z\in\mathcal{A}^Z,z'\in\mathcal{A}^{Z'}
\]
\[
A=A^Z\oplus A^{Z'}.
\]
Let us prove these: The first part is trivial. We prove the second part by induction on $r=\# l(A)$; $r=1$ is trivial. For general $r$, first, we have $A^{Z}\cap A^{Z'}=0$. Assume $A^{Z}\neq 0$. Then, we have $(A/A^Z)^Z=0$ and a split exact sequence
\[\xymatrix{
0\ar[r]&A^Z\ar[r]&A\ar@<1mm>[r]^(.35){\pi}&(A/A^Z)^{Z'}\ar@<1mm>[l]^(.65){s}\ar[r]&0
}\]
and we have $s((A/A^Z)^{Z'})\subset A^{Z'}$, i.e., $A=A^Z+A^{Z'}$. In the case $A^Z=0$, we have $A^{Z'}\neq 0$ and the same argument works.
\end{proof}

At last, we recall about Lie algebra cohomology. Let $K$ be a field of characteristic $0$ and $\LLl$ be a finite dimensional Lie algebra over $K$. Denote the universal enveloping algebra of $\LLl$ by $U\LLl$ and regard a representation of $\LLl$ as a left $U\LLl$-module. Lie algebra cohomology is the $\delta$-functor $\mathrm{H}(\LLl,-):=\mathrm{Ext}_{U\LLl}(K,-):\RL\to\VEC{K}$. 
Note that Lie algebra cohomology commutes with a change of base fields.
\begin{dfn}[{\cite[Corollary~7.7.3]{Wei}}]\label{dfn:CE}
For $V\in\RL$, the Chevalley-Eilenberg complex $\mathrm{Hom}_{K}(\wedge^{\bullet}\LLl,V)$ is the complex
\[\xymatrix{
0\ar[r]&\mathrm{Hom}_{K}(\wedge^0{\LLl},V)\ar[r]^{d^0}&\mathrm{Hom}_{K}(\wedge^{1}{\LLl},V)\ar[r]^{d^1}&\mathrm{Hom}_{K}(\wedge^{2}{\LLl},V)\ar[r]^(.7){d^2}&\dotsb
}\]
where $d^q:\mathrm{Hom}_{K}(\wedge^q{\LLl},V)\to\mathrm{Hom}_{K}(\wedge^{q+1}{\LLl},V)$ is given by the following formula$:$

For $f\in\mathrm{Hom}_{K}(\wedge^q{\LLl},V)$,
\begin{align*}
d^q\!f(\xx{1}\wedge\dotsb\wedge\xx{q+1})=&\sum_{1\le i\le q+1}{(-1)^{i+1}\xx{i}f(\xx{1}\wedge\dotsb{\widehat{\mathfrak{X}}}_i\dotsb\wedge\xx{q+1})}\\
&+\sum_{1\le i<j\le q+1}{(-1)^{i+j}f([\xx{i},\xx{j}]\wedge\xx{1}\wedge\dotsb{\widehat{\mathfrak{X}}}_i\dotsb{\widehat{\mathfrak{X}}}_j\dotsb\wedge\xx{q+1})}.
\end{align*}
\end{dfn}

\begin{lem}[{\cite[Corollary~7.7.3]{Wei}}]\label{lem:CE}
For $V\in\RL$, we have a canonical isomorphism
\[
\coh{q}{\LLl}{V}\cong\mathrm{H}^{q}(\mathrm{Hom}_{K}{(\wedge^{\bullet}{\LLl},V))}.
\]
\end{lem}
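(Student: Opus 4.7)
The plan is to compute $\coh{q}{\LLl}{V}=\mathrm{Ext}^{q}_{U\LLl}(K,V)$ by exhibiting an explicit projective resolution of the trivial module $K$ and identifying the resulting complex of morphisms with the Chevalley-Eilenberg complex.

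First I would construct the \emph{Koszul (Chevalley-Eilenberg) resolution}
\[
\cdots\to U\LLl\otimes_{K}\wedge^{q}\LLl\to U\LLl\otimes_{K}\wedge^{q-1}\LLl\to\cdots\to U\LLl\otimes_{K}\wedge^{0}\LLl\to K\to 0,
\]
where each term is a free (hence projective) left $U\LLl$-module, the augmentation $U\LLl\to K$ is the counit, and the boundary $\partial_{q}:U\LLl\otimes_{K}\wedge^{q}\LLl\to U\LLl\otimes_{K}\wedge^{q-1}\LLl$ is the $K$-linear map
\begin{align*}
\partial_{q}(u\otimes \xx{1}\wedge\cdots\wedge\xx{q})
&=\sum_{i=1}^{q}(-1)^{i+1}u\xx{i}\otimes\xx{1}\wedge\cdots\widehat{\mathfrak{X}}_{i}\cdots\wedge\xx{q}\\
&\quad +\sum_{i<j}(-1)^{i+j}u\otimes[\xx{i},\xx{j}]\wedge\xx{1}\wedge\cdots\widehat{\mathfrak{X}}_{i}\cdots\widehat{\mathfrak{X}}_{j}\cdots\wedge\xx{q}.
\end{align*}
A direct calculation (expanding $\partial^{2}$ and using the Jacobi identity together with the commutator relations in $U\LLl$) shows $\partial^{2}=0$.

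The key step, which I expect to be the main technical point, is proving exactness of this complex. The clean way is to use the Poincar\'e-Birkhoff-Witt theorem: fix an ordered basis of $\LLl$ and put the natural filtration on $U\LLl\otimes_{K}\wedge^{\bullet}\LLl$ coming from the PBW filtration on $U\LLl$. Passing to the associated graded, the commutator term in $\partial$ drops out and one is left with the classical Koszul complex of the symmetric algebra $S(\LLl)$ on the exterior algebra $\wedge^{\bullet}\LLl$, which is exact in positive degrees with $H^{0}=K$ by the standard Koszul acyclicity. Since the filtration is exhaustive and bounded below, exactness lifts to the original complex.

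Once the resolution is established, I would compute $\mathrm{Ext}^{q}_{U\LLl}(K,V)$ by applying $\mathrm{Hom}_{U\LLl}(-,V)$. The standard adjunction
\[
\mathrm{Hom}_{U\LLl}(U\LLl\otimes_{K}\wedge^{q}\LLl,V)\cong\mathrm{Hom}_{K}(\wedge^{q}\LLl,V)
\]
identifies the resulting cochain complex with the one in Definition~\ref{dfn:CE}: a direct translation of the boundary $\partial_{q+1}$ via this adjunction yields exactly the formula for $d^{q}$ stated there, where the first summand comes from the $u\xx{i}$ term (the $U\LLl$-action becomes the action on $V$) and the second summand from the bracket term. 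Taking cohomology gives the claimed isomorphism.
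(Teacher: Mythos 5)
Your argument is correct and is precisely the standard proof the paper is citing: the paper gives no proof of this lemma, referring instead to Weibel, Corollary~7.7.3, whose proof is exactly your Chevalley--Eilenberg/Koszul resolution of $K$ (with exactness via the PBW filtration and Koszul acyclicity of the associated graded) followed by the $\mathrm{Hom}_{U\LLl}(U\LLl\otimes_K\wedge^{q}\LLl,V)\cong\mathrm{Hom}_K(\wedge^{q}\LLl,V)$ adjunction. Nothing to add.
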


\begin{dfn}[{\cite[Definition~7.4.3]{Wei}}]\label{dfn:der}
For $V\in\RL$, a $(K$-$)$derivation $\delta:\LLl\to V$ is a $K$-linear map such that $\delta([\mathfrak{X},\mathfrak{X}'])=\mathfrak{X}\delta({\mathfrak{X}'})-\mathfrak{X}'\delta({\mathfrak{X}})$ for $\mathfrak{X},\mathfrak{X}'\in\LLl$. Denote by $\mathrm{Der}(\LLl,V)$ the space of derivations $\LLl\to V$. Note that $\mathrm{Der}(\LLl,V)$ coincides with the space of $1$-cocycles of the Chevalley-Eilenberg complex $\mathrm{Hom}_K(\wedge^{\bullet}\LLl,V)$.
\end{dfn}



\begin{lem}[Koszul-Leger, {\cite[A special case of Lemma~3]{Hoch}}]\label{lem:KL}
If $\LLl$ is solvable, then the $\delta$-functor \[\mathrm{H}(\LLl,-):\RL\to\VEC{K}\] is effaceable.
\end{lem}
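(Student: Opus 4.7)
The plan is to verify the defining property of effaceability: for every $V\in\RL$ and every $n\ge 1$, to produce an embedding $V\hookrightarrow W$ in $\RL$ for which the induced map $\coh{n}{\LLl}{V}\to\coh{n}{\LLl}{W}$ vanishes. I would proceed by induction on $d=\dim_K\LLl$; the base case $d=0$ is immediate since then $\coh{n}{\LLl}{-}=0$ for all $n\ge 1$.

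For the inductive step, use that a nonzero solvable Lie algebra over a field of characteristic zero always contains a codimension-one ideal $\mathfrak{h}\subset\LLl$ (for instance, any hyperplane containing $[\LLl,\LLl]$). The Hochschild--Serre spectral sequence
\[
E_2^{p,q}=\coh{p}{\LLl/\mathfrak{h}}{\coh{q}{\mathfrak{h}}{V}}\Rightarrow\coh{p+q}{\LLl}{V}
\]
collapses, because $\LLl/\mathfrak{h}$ is one-dimensional abelian, into short exact sequences
\[
0\to\coh{1}{\LLl/\mathfrak{h}}{\coh{n-1}{\mathfrak{h}}{V}}\to\coh{n}{\LLl}{V}\to\coh{0}{\LLl/\mathfrak{h}}{\coh{n}{\mathfrak{h}}{V}}\to 0.
\]
Hence effacing a class in $\coh{n}{\LLl}{V}$ reduces to effacing pieces controlled by $\coh{n-1}{\mathfrak{h}}{V}$ and $\coh{n}{\mathfrak{h}}{V}$, for which the inductive hypothesis already supplies effacing embeddings $V\hookrightarrow W'$ inside $\mathrm{Rep}\ \mathfrak{h}$.

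The main obstacle I anticipate is that the embedding $W'$ produced by induction is a priori only $\mathfrak{h}$-equivariant, whereas what is needed is a \emph{finite-dimensional} enlargement equivariant for the whole of $\LLl$. The motivating model is the abelian case $\LLl=Kx$ with $V=K$ trivial: the single nontrivial class in $\coh{1}{\LLl}{K}$ is killed by the two-dimensional Jordan block on which $x$ acts nilpotently as $\bigl(\begin{smallmatrix}0&1\\0&0\end{smallmatrix}\bigr)$. The plan in the general step is to assemble such local Jordan constructions by adjoining a finite chain of $x$-translates of $W'$, using that the adjoint action of any chosen $x\in\LLl\setminus\mathfrak{h}$ preserves $\mathfrak{h}$ and, via Lie's theorem applied to $\LLl$, has eigenvalues on the relevant cohomology pieces that can be controlled to guarantee the chain terminates. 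This assembly step—exactly the special case of Hochschild's Lemma~3 being cited—is the technical heart of the argument; once it is carried out, the short exact sequence above closes the induction and yields effaceability of $\CH{\LLl}{-}$.
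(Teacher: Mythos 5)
Your skeleton---induction on $\dim_K\LLl$ via a codimension-one ideal $\mathfrak{h}\supseteq[\LLl,\LLl]$ and the two-column Hochschild--Serre spectral sequence, which does degenerate into the short exact sequences you write---is the standard reduction, and the step of effacing both $\coh{n-1}{\mathfrak{h}}{V}$ and $\coh{n}{\mathfrak{h}}{V}$ with a single embedding (by composing two effacing maps) is unproblematic. Note that the paper itself gives no proof of this lemma: it is quoted directly from Hochschild's Lemma~3, so the only comparison to be made is with that reference.

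The difficulty is that your argument stops exactly where the content of the cited lemma begins. The inductive hypothesis hands you an embedding $V\hookrightarrow W'$ that is only $\mathfrak{h}$-equivariant, and you correctly identify that promoting it to a \emph{finite-dimensional} $\LLl$-equivariant embedding, while keeping the effacing property after restriction to $\mathfrak{h}$, is ``the technical heart''---but you then only gesture at it (``a finite chain of $x$-translates of $W'$'' with ``eigenvalues \dots that can be controlled''). This is not routine assembly: it is precisely the point where solvability must enter in an essential, quantitative way. A sanity check: the statement is false for $\mathfrak{sl}_2$, since $\coh{3}{\mathfrak{sl}_2}{K}\neq 0$ while complete reducibility forces every finite-dimensional embedding of the trivial module to split, so the nonzero class can never be effaced; hence any correct proof must use more than the existence of a codimension-one ideal. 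Concretely, you would still need to (a) construct the finite-dimensional $\LLl$-module realizing your ``chain of translates'' (e.g.\ a suitable truncation of the coinduced module $\mathrm{Hom}_{U\mathfrak{h}}(U\LLl,W')$, using that $U\LLl$ is free over $U\mathfrak{h}$ with basis the powers of $x$), (b) verify that $V$ embeds $\LLl$-equivariantly into it and that the restriction to $\mathfrak{h}$ still kills the relevant classes, and (c) prove the termination claim via Lie's theorem in characteristic zero. As written, the decisive step is asserted rather than proved, so the proposal is an outline with a genuine gap.
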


\section{$p$-adic extensions and Lie algebras}\label{sec:pad}
In this section, we fix the notation concerning $p$-adic extensions and the Lie algebras associated to them which we use later.

Let $\overline{t_1},\dotsc,\overline{t_d}\in k_K$ be a $p$-basis of $k_K$ and choose a lift $t_1,\dotsc,t_d\in\ok$ of $\overline{t_1},\dotsc,\overline{t_d}$. Let $\kinf=K(\mu_{p^{\infty}},t_1^{p^{-\infty}},\dotsc,t_d^{p^{-\infty}})$ and $H=G_{\kbar/\kinf}$ and $\R=G_{\kinf/K}$.

Let $\chi:G_K\to\zp^{\times}$ be the cyclotomic character and let $\kcycl/K$ be the cyclotomic $\zp$-extension. For $n\in\mathbb{Z}$, denote the $n$-th Tate twist by $(n)$. Let $U'=\chi(\gk),U=(1+2p\zp)\cap U'\trianglelefteq_{\mathrm{o}}\zp^{\times},T=U'\cap (\zp^{\times})_{\mathrm{tors}}.$ Let $c\in\mathbb{N}_{>0}$ such that $U=1+2p^c\zp$.

Let $\ee=(\zeta_p,\zeta_{p^2},\dotsb)$ be a generator of $\zp(1)$ and $\karith=K(\mu_{p^{\infty}})$. Let $\RG=G_{\kinf/\karith}$ and $\RA=G_{\karith/K},\RC=G_{\kcycl/K}$. Regard $\RA$ as a closed subgroup of $\R$ by the canonical injection $G_{\karith/K}\cong G_{\kinf/K(t_1^{p^{-\infty}},\dotsc,t_d^{p^{-\infty}})}\hookrightarrow\R$, which is a section of $\R\to\RA$. Then, we have an isomorphism of topological groups $\R\cong\RA\ltimes\RG$. Let $\rr{j}\in\RG$ for $1\le j\le d$ such that $\rr{j}(t_j^{p^{-n}})=\zeta_{p^n}t_j^{p^{-n}}$ for all $n\in\mathbb{N}$. We have isomorphisms $\RG\to\zp^d;\rr{j}\mapsto 1$ and $\chi:\RA\to U'$. Hence $\R\cong U'\ltimes\zp^d$, in particular, $\R$ has a $p$-adic Lie group structure. Let $\LLl=\mathrm{Lie}(\R),\LLg=\mathrm{Lie}(\RG),\LLa=\mathrm{Lie}(\RA),\LLc=\mathrm{Lie}(\RC)$. By the identification
\[
U'\ltimes\zp^d=
\begin{pmatrix}
U'&\zp&\cdots &\zp\\
 &1& &\\
 &&\ddots&\\
&&&1\\
\end{pmatrix}
\subset GL_{d+1}(\qp),
\]
we have
\[
\mathfrak{g}\cong\qp\ltimes\qp^d=
\begin{pmatrix}
\qp&\cdots&\qp \\
 && \\
 &\mbox{\smash{\huge $0$}}& 
\end{pmatrix}
\subset\mathfrak{gl}_{d+1}(\qp)
\]
where $\qp$ acts on $\qp^d$ by the scalar multiplication and $\LLg\cong\qp^d,\LLa\cong\LLc\cong\qp$.

Let $\mathrm{exp}:\mathfrak{g}\to\R$ be the exponential map whose domain is $2p^c\zp\ltimes\zp^d\subset\mathfrak{g}$ and image is $U\ltimes\zp^d\subset\R$ and let $\mathrm{log}:\R\to\mathfrak{g}$ be the log map. Let $\xx{0}=(1,0),\xx{j}=(0,e_j)\in \mathfrak{g}\cong\qp\ltimes\qp^d$ for $1\le j\le d$ with the $j$-th basic vector $e_j=(0,\dotsc,1,\dotsc,0)\in\qp^d$ and let $\rr{0}=\exp{(2p^c\xx{0})},\rr{j}=\exp{(\xx{j})}\in\R$ for $1\le j\le d$. Note that $\{\xx{i}\}_{0\le i\le d}$ is a basis of $\mathfrak{g}$ and we have $[\xx{0},\xx{j}]=\xx{j},[\xx{j},\xx{k}]=0$ for $1\le j,k\le d$.

\section{Hyodo's calculation and Sen's theory}\label{sec:Hyo}
Let $\HD:=K\otimes\varprojlim_n{\Omega^1_{\ok/\mathbb{Z}}/p^n\Omega^1_{\ok/\mathbb{Z}}}$. For $\sigma\in\gk$, let $s_j(\sigma)\in\zp$ such that $\sigma(t_j^{p^{-n}})/t_j^{p^{-n}}=\zeta_{p^n}^{s_j(\sigma)}$ for all $n\in\mathbb{N}$. Note that the continuous map $\ee^{s_j}:\gk\to\zp(1);\sigma\mapsto\ee^{s_j(\sigma)}$ is a $1$-cocycle.
\begin{thm}[{\cite[Theorem~1, Remark~3]{Hyo}}]\label{thm:hyo}
 ${ }$
\begin{enuroman}
\item For $n\in\mathbb{Z}\setminus\{0\},\cc{0}{\gk}{\cp(n)}=0$ and $\cc{0}{\gk}{\cp}\cong K$.

\item For $q>0,n\in\mathbb{Z}$, we have
\[
\dim_{K}{\cc{q}{\gk}{\cp(n)}}=
\begin{cases}
\binom{d}{n}&n=q,q-1\\
\ 0&otherwise.
\end{cases}
\]

\item $\cc{1}{\gk}{\cp}$ is generated by $\log{\chi}$ and $\cc{1}{\gk}{\cp(1)}$ is generated by $\ee^{s_j}$ for $1\le j\le d$.

\item For $q>0$, we have canonical isomorphisms
\begin{align*}
\wedge^q\cc{1}{\gk}{\cp(1)}&\stackrel{\mathrm{cup}}{\to}\cc{q}{\gk}{\cp(q)}\\
\cc{1}{\gk}{\cp}\otimes(\wedge^{q-1}\cc{1}{\gk}{\cp(1)})&\stackrel{\mathrm{cup}}{\to}\cc{q}{\gk}{\cp(q-1)}.
\end{align*}
\end{enuroman}
\end{thm}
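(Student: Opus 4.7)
The plan is a two-step reduction via the Hochschild--Serre spectral sequence for $1 \to \hk \to \gk \to \R \to 1$, with $\hk = G_{\kbar/\kinf}$. First I would establish the almost étale descent isomorphism
\[
\cc{q}{\R}{\widehat{\kinf}(n)} \xrightarrow{\;\sim\;} \cc{q}{\gk}{\cp(n)}
\]
for all $q, n$, and then compute the $\R$-cohomology explicitly using $\R \cong \RA \ltimes \RG$ with $\RG \cong \zp^d$.

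For the descent step, the Ax--Sen--Tate theorem gives $\cp^{\hk} = \widehat{\kinf}$, which handles (i) at $n = 0$; the vanishing $\cc{0}{\gk}{\cp(n)} = 0$ for $n \neq 0$ follows since an invariant generator would $\gk$-equivariantly trivialize $\cp(n)$, contradicting Sen's theorem. For higher cohomology the key input is that $\kbar/\kinf$ is almost étale: because $\kinf$ adjoins all $p$-power roots of unity \emph{and} all $p$-power roots of the $p$-basis lifts $t_j$, Faltings' almost purity (or Hyodo's direct ramification estimate) shows $\cc{q}{\hk}{\ocp/p^m}$ is killed by bounded $p$-torsion for $q > 0$. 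Passing to the limit then kills $\cc{q}{\hk}{\cp(n)}$ and identifies the inflation map as an isomorphism.

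To compute the $\R$-cohomology I would apply Hochschild--Serre once more, now to $1 \to \RG \to \R \to \RA \to 1$. Since $\RG \cong \zp^d$ is a free pro-$p$ abelian group, its continuous cohomology on a $\widehat{\kinf}$-Banach representation is computed by the Koszul complex on the topological generators $\rr{1}, \dotsc, \rr{d}$. As $\RG$ acts trivially on $\mu_{p^\infty}$, one has $\widehat{\kinf}(n) \cong \widehat{\kinf}$ as $\RG$-modules; decomposing $\widehat{\kinf}$ into the $\RG$-eigenspaces on the topological basis of monomials $t_1^{a_1}\cdots t_d^{a_d}$ with $a_i \in \mathbb{Z}[1/p]$ and observing that $\rr{j}-1$ is invertible on each non-trivial eigenspace, one obtains a canonical isomorphism $\cc{q}{\RG}{\widehat{\kinf}(n)} \cong \wedge^q(\widehat{\karith})^d \otimes \widehat{\karith}(n)$, on which $\RA$ acts through $\chi^{n-q}$ (the extra $\chi^{-q}$ arising from the adjoint action $\sigma \rr{j} \sigma^{-1} = \rr{j}^{\chi(\sigma)}$ of $\RA$ on $\RG$).

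Finally, Tate's classical calculation on the rank-one $p$-adic Lie group $\RA \subset \zp^\times$ gives $\cc{q'}{\RA}{\widehat{\karith}(m)} = K$ exactly when $(q', m) \in \{(0,0),(1,0)\}$ and vanishes otherwise. Combining this with the $\RG$-computation via the (dimension-reason degenerating) Hochschild--Serre spectral sequence yields the claimed dimension $\binom{d}{n}$ in (ii) for $n = q$ (from $q' = 0$) and for $n = q-1$ (from $q' = 1$). The explicit generators in (iii) come out as the Koszul cocycles $\rr{j} \mapsto \ee$ (identified with $\ee^{s_j}$ via the Kummer pairing) and the Tate generator $\log \chi$ of $\cc{1}{\RA}{\widehat{\karith}}$; (iv) is immediate from the exterior-algebra structure of Koszul cohomology together with compatibility of cup products with the Hochschild--Serre filtration. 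The principal obstacle is the descent step: in the imperfect residue field setting the extra parameters $t_j$ render the ramification analysis of $\cc{q}{\hk}{\ocp/p^m}$ substantially more delicate than Tate's original perfect-residue argument, and this is precisely where Faltings' almost purity is essential.
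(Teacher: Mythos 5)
Your outline is essentially a reconstruction of Hyodo's original proof, whereas the paper does not reprove this theorem at all: it simply cites \cite[Theorem~1, Remark~3]{Hyo} for (i), (ii), the first half of (iii) and (iv), and only supplies an argument for the one assertion not explicit in that reference, namely that $\ee^{s_1},\dotsc,\ee^{s_d}$ generate $\cc{1}{\gk}{\cp(1)}$. For that it invokes Scholl's result that $\mathrm{dlog}\,t_1,\dotsc,\mathrm{dlog}\,t_d$ form a $K$-basis of $\HD$ together with Hyodo's explicit isomorphism $\HD\cong\cc{1}{\gk}{\cp(1)}$ sending $\mathrm{dlog}\,t_j$ to $\ee^{s_j}$ --- a two-line argument that replaces your Koszul-cocycle identification. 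Your route (descent along $\hk$, then Hochschild--Serre for $1\to\RG\to\R\to\RA\to 1$, Koszul complex for $\RG\cong\zp^d$ with the twist $\chi^{n-q}$ from the adjoint action, then Tate's rank-one computation for $\RA$) is the correct architecture and your degree/dimension bookkeeping checks out; it buys a self-contained proof, at the cost of the two genuinely hard inputs. You correctly flag the first (almost-\'etaleness of $\kbar/\kinf$, i.e.\ the vanishing of $\cc{q}{\hk}{\cp(n)}$ for $q>0$, which is Hyodo's main technical contribution), but you understate the second: the boundedness of the normalized traces for $\kcycl/K$ and for the Kummer directions is \emph{not} Tate's classical setting, because with imperfect residue field the extensions $K(\mu_{p^n})/K$ can have inseparable residue extensions and the usual different estimates fail as stated --- this is exactly the gap in \cite{Bri} that the Appendix of this paper repairs, and the fix (Lemma~\ref{lem:tr}(i)) is to restrict the trace from the perfect-residue-field completion $\mathbb{K}$ of $K(t_1^{p^{-\infty}},\dotsc,t_d^{p^{-\infty}})$. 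Also, your justification of $\cc{0}{\gk}{\cp(n)}=0$ for $n\neq 0$ by appeal to ``Sen's theorem'' is superfluous and slightly circular; it falls out of the same $\RG$-then-$\RA$ computation.
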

\begin{proof}
We have only to prove the latter assertion in (iii). By \cite[Proposition~3.4.3(ii)]{Sch}, $\mathrm{d}\mathrm{log}t_j:=\mathrm{d}t_j/t_j\in\HD$ for $1\le j\le d$ is a basis of $\HD$ over $K$ and the image of $\mathrm{d}\mathrm{log}t_j$ by the canonical isomorphism $\HD\cong\cc{1}{\gk}{\cp(1)}$ is $\ee^{s_j}$ by \cite[$\S 4$]{Hyo}.
\end{proof}

\begin{thm}[{\cite[Th\'{e}or\`{e}me~1, Th\'{e}or\`{e}me~2]{Bri}}]\label{thm:sen}
${ }$
\begin{enuroman}
\item $\GC\to\mathrm{Rep}_{\widehat{\kinf}}{\R};V\mapsto V^{H}$ induces an equivalence of Tannakian categories with a quasi-inverse $V\mapsto \cp\otimes V$.

\item For $V\in\G$, let $V^{\mathrm{f}}$ be the union of $\R$-stable, finite dimensional $K$-vector subspaces of $V$. Then, $V^{\mathrm{f}}\in\G$ and $\mathrm{Rep}_{\widehat{\kinf}}{\R}\to\G;V\mapsto V^{\mathrm{f}}$ induces an equivalence of Tannakian categories with a quasi-inverse $V\mapsto \widehat{\kinf}\otimes V$.
\end{enuroman}
\end{thm}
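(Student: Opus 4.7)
The plan is to follow Sen's original two-step descent, adapted to the imperfect residue field setting: first descend from $\cp$ to $\widehat{\kinf}$ by Galois descent along $H = G_{\kbar/\kinf}$, then descend further from $\widehat{\kinf}$ to $\kinf$ using the $p$-adic Lie group structure on $\R$.

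For (i), the key inputs are the Tate--Hyodo computations, chief among them $\cp^H = \widehat{\kinf}$ together with the construction of Tate's normalised trace $\cp \to \widehat{\kinf}$, which provides a continuous $\widehat{\kinf}$-linear retraction of the inclusion whose restriction to the complement has controlled operator norm. Given $V \in \GC$, choose a $\cp$-basis so as to present the $H$-action as a continuous $1$-cocycle $c : H \to GL_n(\cp)$. Using a quantitative Hilbert~90 extracted from the trace estimates, conjugate $c$ inductively by matrices congruent to $1$ modulo higher and higher powers of $p$; in the limit this produces an honest $H$-fixed basis, whence $\dim_{\widehat{\kinf}} V^H = \dim_{\cp} V$ and the canonical map $\cp \otimes_{\widehat{\kinf}} V^H \to V$ is an isomorphism. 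Exactness of $V \mapsto V^H$ reduces to the vanishing of $\cc{1}{H}{V}$, proved by the same approximation argument. Compatibility with $\otimes$ and $(-)^{\vee}$ is formal, because $H$-invariants commute with the Tannakian operations on admissible representations.

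For (ii), let $V \in \Ghat$ of dimension $n$. The functor $V \mapsto V^{\mathrm{f}}$ visibly lands in $\G$; the real content is to prove $\widehat{\kinf} \otimes_{\kinf} V^{\mathrm{f}} \xrightarrow{\sim} V$. The plan is to exploit the decomposition $\R \cong \RA \ltimes \RG$ with $\RG \cong \zp^d$. On a sufficiently small open subgroup of $\RG$ the $p$-adic logarithm of the action converges and yields $d$ commuting $\widehat{\kinf}$-linear operators on $V$; since $\RG$ acts through analytic automorphisms of a one-parameter nature along each coordinate, an averaging argument over a shrinking basis of neighbourhoods, combined with the cohomological smallness provided by step (i), shows that the $\RG$-finite vectors are dense in $V$ and span a $\kinf$-structure of full rank. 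The remaining descent across the one-dimensional $p$-adic Lie group $\RA$ is essentially the classical Sen decompletion and is handled by the same technique.

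The main obstacle, in both parts, is the quantitative piece: one needs estimates on the normalised trace $\cp \to \widehat{\kinf}$ and on the convergence radii of $\exp/\log$ for the action of $\R$ that are sharp and uniform enough for the inductive trivialisations on $GL_n$ to converge. In the imperfect residue field case the extra generators $t_j^{p^{-\infty}}$ contribute additional defect terms that must be absorbed into the estimates, but once these are established, as Brinon does following Hyodo and Tate, the Tannakian statements follow formally from the fact that both $(-)^H$ and $(-)^{\mathrm{f}}$ commute with $\otimes$, $(-)^\vee$, and preserve the unit object on admissible objects.
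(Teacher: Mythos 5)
Your outline reproduces the strategy of Brinon's original proof: Tate's normalised traces and a quantitative Hilbert~90 for the descent from $\cp$ to $\widehat{\kinf}$, then a variable-by-variable decompletion from $\widehat{\kinf}$ to $\kinf$ using $\R\cong\RA\ltimes\RG$. That is the right skeleton, and it is in fact the proof the paper is citing: Theorem~\ref{thm:sen} is stated with a reference to \cite{Bri} and is not reproved in the body of the paper.

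However, the Appendix (\S\ref{sec:App}) exists precisely because this argument, as you have sketched it, has two genuine gaps in the general imperfect residue field case, and your proposal inherits both. First, the trace estimates you invoke (the ``controlled operator norm'' and the ``additional defect terms'' from the $t_j^{p^{-\infty}}$) rest on Brinon's Lemme~1 on the ramification of the towers $K_m^{(i)}$; its proof tacitly assumes that the residue extension at each stage is generated by the image of $t_i^{p^{-(m+1)}}$, which fails when $K(\mu_{p^{m+1}})/K(\mu_{p^m})$ has inseparable residue extension. The repair is to run the construction relative to $\mathbb{K}$, the $p$-adic completion of $K(t_1^{p^{-\infty}},\dotsc,t_d^{p^{-\infty}})$, which is a CDVF with perfect residue field. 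Second, and more seriously for part~(ii), the clean identification $\R\cong U'\ltimes\zp^d$ and the final descent across $\RA$ are only available under Assumption~\ref{ass} (that $\zeta\in K$ and that $K$ is unramified over its canonical subfield $\kcan$); in general the torsion subgroup obstructing this is not even normal, so one cannot simply ``handle $\RA$ by the classical Sen decompletion.'' The general case of (ii) is recovered not by sharpening the convergence estimates but by a formal descent: one shows that the decompletion property (the property $\ast$ of the appendix) passes along finite extensions at either end of the tower, and then reduces to the good case via a Cohen subfield $K_0$ containing $t_1,\dotsc,t_d$ and a Galois closure, with Epp's theorem in the background. Your averaging argument over shrinking neighbourhoods does not substitute for this reduction; without it the sketch only establishes the theorem under the additional hypotheses of Assumption~\ref{ass}.
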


\begin{dfn}[{\cite[$\S\S 3.2.$]{Bri}}]\label{def:dif}
For $V\in\G\ (\text{resp. }\GG,\GA,\Gcycl,\GGG)$, the differential representation $\PP{V}\in\LL\ (\text{resp. }\LG,\LAA,\Lcycl,\LK)$ of $V$ is the vector space $V$ with an action of $\mathfrak{X}\in\mathfrak{g}$ on $v\in V$ defined by
\[
\mathfrak{X}(v)=\lim_{t\to 0}{\frac{\exp{(t\mathfrak{X})}-1}{t}(v)},\quad t\in\zp,v_p(t)\gg 0.
\]
For $V\in\GC$, put $\DS{V}=\PP{(V^{H})^{\mathrm{f}}}$.
\end{dfn}

\begin{lem}[{\cite[Th\'{e}or\`{e}me~5]{Bri}}]\label{lem:sen}
\begin{enuroman}
\item The functors $\partial$ in Definition~\ref{def:dif} are well-defined and functors of Tannakian categories.

\item For $v\in V\in\G\ (\text{resp. }\GG,\GA,\Gcycl,\GGG)$, there exists an open subgroup $\Gamma_{v}\trianglelefteq_{\mathrm{o}}\R\ (\text{resp. }\Gamma^{\mathrm{geom}}_{v}\trianglelefteq_{\mathrm{o}}\RG,\Gamma^{\mathrm{arith}}_{v}\trianglelefteq_{\mathrm{o}}\RA,\Gamma^{\mathrm{cycl}}_{v}\trianglelefteq_{\mathrm{o}}\RC,\Gamma_{v}\trianglelefteq_{\mathrm{o}}\R)$ such that
\[
\gamma(v)=\exp{(\log{\gamma})(v)}
\]
for $\gamma\in\Gamma_{v}\ (\text{resp. }\Gamma^{\mathrm{geom}}_{v},\Gamma^{\mathrm{geom}}_{v},\Gamma^{\mathrm{cycl}}_{v},\Gamma_{v})$.

\item For $V\in\G\ (\text{resp. }\GG,\GA,\Gcycl,\GGG)$, $\coh{0}{\Ll}{\PP{V}}$ is\linebreak $\R\ (\text{resp. }\RG,\RA,\RC,\R)$-stable, that is, $\coh{0}{\Ll}{\PP{V}}\in\G\ (\text{resp. }\GG,\GA,\linebreak\Gcycl,\GGG)$.

\item There exists canonical isomorphisms
\begin{align*}
\kinf\otimes_{K}\mathrm{Hom}_{\G}(V_1,V_2) & \cong\mathrm{Hom}_{\LL}(\PP{V_1},\PP{V_2})\ \text{for}\ V_1,V_2\in\G\\
\kinf\otimes_{\karith}\mathrm{Hom}_{\GG}(V_1,V_2)&\cong\mathrm{Hom}_{\LG}(\PP{V_1},\PP{V_2})\ \text{for}\ V_1,V_2\in\GG\\
\karith\otimes_{K}\mathrm{Hom}_{\GA}(V_1,V_2)&\cong\mathrm{Hom}_{\LAA}(\PP{V_1},\PP{V_2})\ \text{for}\ V_1,V_2\in\GA\\
\kcycl\otimes_{K}\mathrm{Hom}_{\Gcycl}(V_1,V_2)&\cong\mathrm{Hom}_{\Lcycl}(\PP{V_1},\PP{V_2})\ \text{for}\ V_1,V_2\in\Gcycl.
\end{align*}

\item For $V_1,V_2\in\G\ (\text{resp. }\GG,\GA,\Gcycl)$,
\[
V_1\cong V_2\Leftrightarrow\PP{V_1}\cong\PP{V_2}.
\]
\end{enuroman}
\end{lem}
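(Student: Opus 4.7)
My plan is to establish the five parts in dependency order, treating the main case $\G\to\LL$ explicitly; the arguments for the four remaining categories are parallel. For (i) and (ii), I would fix $V\in\G$, choose a $K_\infty$-basis, and regard the $\R$-action as a continuous cocycle $\rho\colon\R\to GL_n(K_\infty)$. By continuity, for each $v\in V$ there is an open subgroup $\Gamma_{v}\trianglelefteq_{\mathrm{o}}\R$ on which $\rho(\gamma)-1$ is small enough that the matrix logarithm
\[
\log\rho(\gamma):=\sum_{k\ge 1}(-1)^{k+1}(\rho(\gamma)-1)^{k}/k
\]
converges in $\mathrm{End}_{K_\infty}(V)$ and $\rho(\gamma)=\exp\log\rho(\gamma)$. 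Setting $\mathfrak{X}(v):=(\log\rho(\exp\mathfrak{X}))(v)$ for $\mathfrak{X}\in\log\Gamma_{v}$ and extending $K$-linearly to all of $\mathfrak{g}$ produces an action matching the limit in Definition~\ref{def:dif}; the Jacobi identity and compatibility with $\otimes$ and $^{\vee}$ come from the Baker--Campbell--Hausdorff formula applied to $\rho$, proving (i), while (ii) is the tautology $\rho(\gamma)=\exp\log\rho(\gamma)$ restricted to $\Gamma_{v}$. For (iii) I would differentiate the conjugation identity $\gamma\exp(t\mathfrak{X})\gamma^{-1}=\exp(t\,\mathrm{Ad}(\gamma)(\mathfrak{X}))$ via (ii) to get $\mathfrak{X}\circ\gamma=\gamma\circ\mathrm{Ad}(\gamma^{-1})(\mathfrak{X})$ on $V$; then for $v\in\coh{0}{\mathfrak{g}}{\partial V}$ and $\gamma\in\R$, $\mathfrak{X}\in\mathfrak{g}$, we have $\mathfrak{X}(\gamma v)=\gamma(\mathrm{Ad}(\gamma^{-1})(\mathfrak{X})(v))=0$.

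For (iv), Lemma~\ref{lem:tan} together with $\underline{\mathrm{Hom}}(V_{1},V_{2})=V_{1}^{\vee}\otimes V_{2}$ reduces the Hom comparison to
\[
K_{\infty}\otimes_{K}V^{\R}\xrightarrow{\sim}\coh{0}{\mathfrak{g}}{\partial V}\quad\text{for every}\ V\in\G.
\]
Let $W:=\coh{0}{\mathfrak{g}}{\partial V}$, which lies in $\G$ by (iii), and fix a $K_\infty$-basis $v_{1},\ldots,v_{n}$ of $W$. Applying (ii) to each $v_{i}$ and intersecting yields an open normal subgroup $\Gamma_{W}\trianglelefteq_{\mathrm{o}}\R$ fixing every $v_{i}$ (since $\exp(\log\gamma)(v_i)=v_i$ when $(\log\gamma)(v_i)=0$); putting $L:=K_{\infty}^{\Gamma_{W}}$, the existence of a $\Gamma_{W}$-fixed $K_\infty$-basis forces $W\cong K_{\infty}\otimes_{L}W^{\Gamma_{W}}$, and Hilbert 90 applied to the finite Galois extension $L/K$ descends this further to $W\cong K_{\infty}\otimes_{K}W^{\R}$. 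The identity $W^{\R}=V^{\R}$ is immediate since (ii) places $V^{\R}$ inside $W$. Part (v) is a Zariski density argument: by (iv), any isomorphism $\partial V_{1}\xrightarrow{\sim}\partial V_{2}$ is a $K_\infty$-linear combination of $\R$-equivariant maps $V_{1}\to V_{2}$, and the isomorphism locus is a non-empty Zariski open subset of the affine $K$-space $\mathrm{Hom}_{\G}(V_{1},V_{2})$, which has $K$-rational points because $K$ is infinite.

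The main obstacle is the analytic content underlying (i) and (ii): one must establish that a continuous semilinear $\R$-representation on a finite-dimensional $K_\infty$-space is locally given by $\exp$ of a Lie algebra action after restriction to an open subgroup --- a Sen/Lazard-style result, here in the semilinear setting over the non-complete field $K_\infty$. Once this is in hand, the descent in (iv) is routine, the crucial input being that finite-dimensionality of $W$ upgrades the pointwise local triviality of (ii) to an open subgroup fixing a full basis of $W$ simultaneously, which is what makes Hilbert 90 applicable.
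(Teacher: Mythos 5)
Your proposal is correct and follows essentially the same route as the paper: parts (i)--(ii) are deferred, as in the paper, to Sen's convergence/BCH argument for one-parameter subgroups; (iii) is the adjoint-action form of the paper's explicit commutation relations; (iv) is exactly the paper's reduction via Lemma~\ref{lem:tan} to $K_{\infty}\otimes_K V^{\R}\cong\coh{0}{\Ll}{\PP{V}}$ followed by potential admissibility plus Hilbert~90; and (v) is Fontaine's Zariski-density argument, which the paper cites. The only caveat is that your definition $\mathfrak{X}(v):=(\log\rho(\exp\mathfrak{X}))(v)$ via the matrix logarithm of the cocycle silently uses that the $\mathfrak{g}$-action on coefficients in $K_{\infty}$ is trivial and that $\log\rho$ is additive on $\log\Gamma_v$ --- precisely the Sen-type input you correctly flag as the remaining analytic content.
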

\begin{proof}
(i) By restricting to one-parameter subgroups (over $\zp$), the same proof of \cite[Theorem~4]{Sen} proves the convergence of the action of $\mathfrak{X}$. And the linearity of this action follows from the fact that the Lie algebras act on unit objects $(\kinf,\karith,\dotsc)$ as derivations over $K$, hence trivial, and the usual Leibniz-rule. And the action of Lie algebras actually give representations is an easy consequence from the Campbell-Hausdorff formula.

(ii) The same proof of \cite[Theorem~4]{Sen} works.

(iii) By the commutativity of $\RG,\RA,\RC$, we can assume $V\in\G,\GGG$. Then, the assertion follows from the following relations which is easy to compute (cf.\cite[Proposition~7]{Bri}):
For $\gamma\in G_{\kinf/K(t_1^{p^{-\infty}},\dotsc,t_d^{p^{-\infty}})}$,
\[
\begin{cases}
\xx{0}\ \text{commutes with}\ \gamma\\
\xx{0}\rr{j}=\rr{j}(\xx{0}+\xx{j})\ \text{for}\ 1\le j\le d,
\end{cases}
\begin{cases}
\xx{j}\gamma=\chi(\gamma)^{-1}\gamma\xx{j}\ \text{for}\ 1\le j\le d\\
\xx{j}\ \text{commutes with}\ \rr{k}\  \text{for}\ 1\le j,k\le d.
\end{cases}
\]

(iv) (\cite[Proposition~2.6(i)]{Fon1}) Let us prove the first assertion. Since $\G$ and $\LL$ are Tannakian, we can assume $V_1=K_{\infty}$ by Lemma~\ref{lem:tan}. Then, the assertion is equivalent to show the isomorphism $K_{\infty}\otimes V_2^{\R}\cong\PP{V_2}^{\Ll}$. To prove this, by replacing $V_2$ by $\PP{V_2}^{\Ll}$, which is an object of $\G$ by (iii), we can assume that $\Ll$ acts trivially on $\PP{V_2}$. By the definition of the functor $\partial$, this implies that the action of $\R$ on $V_2$ is potentially admissible. By Hilbert 90, $\R$ acts admissibly on $V_2$. The rest of the assertions are proved by the same argument.

(v) The same proof of \cite[Proposition~2.6(ii), Lemme~2.7]{Fon1} works.
\end{proof}

\section{Classification of irreducible representations}\label{sec:Cla}
\begin{lem}[Lie's theorem]\label{lem:Lie}
There exists canonical equivalences of categories
\begin{align*}
\IL & \cong \ILA \\
\IG & \cong \IGcycl.
\end{align*}
\end{lem}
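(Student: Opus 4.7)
My plan is to derive both equivalences from a single trace computation that plays the role of Lie's theorem in this semilinear setting. The first equivalence $\IL\cong\ILA$ rests on the observation that the relations $[\xx{0},\xx{j}]=\xx{j}$ and $[\xx{j},\xx{k}]=0$ from $\S\ref{sec:pad}$ give, by induction, $[\xx{0},\xx{j}^{n}]=n\xx{j}^{n}$ in $U\mathfrak{g}$ for every $n\ge 1$, hence $\xx{j}^{n}=(1/n)[\xx{0},\xx{j}^{n}]$. For any $V\in\RL$, taking traces yields $\mathrm{tr}(\xx{j}^{n}|_{V})=0$ for all $n\ge 1$, so $\xx{j}$ acts nilpotently on $V$. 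Because the $\xx{j}$'s commute pairwise, their common kernel $W=\bigcap_{j=1}^{d}\ker(\xx{j}|_{V})$ is nonzero, and the same relation $[\xx{0},\xx{j}]=\xx{j}$ shows $W$ is $\xx{0}$-stable. For $V\in\IL$, irreducibility forces $W=V$, so $\mathfrak{g}^{\mathrm{geom}}$ acts by $0$. Restriction along $\mathfrak{g}^{\mathrm{arith}}\hookrightarrow\mathfrak{g}$ then defines a functor $\IL\to\ILA$, whose quasi-inverse extends a $\mathfrak{g}^{\mathrm{arith}}$-representation to $\mathfrak{g}$ by letting $\mathfrak{g}^{\mathrm{geom}}$ act by $0$; both functors preserve irreducibility because the $\mathfrak{g}$- and $\mathfrak{g}^{\mathrm{arith}}$-subobjects of such an extension coincide.

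For the second equivalence $\IG\cong\IGcycl$, I pass through the differential functor $\partial$ of Definition~\ref{def:dif}. Given $V\in\IG$, the trace computation above applied to $\partial V$ shows that each $\xx{j}$ acts nilpotently on $\partial V$, so by Lemma~\ref{lem:sen}(ii) an open subgroup of $\Gamma^{\mathrm{geom}}$ acts unipotently on $V$. Using the Galois extension $K_{\infty}/K_{\mathrm{arith}}$ with group $\Gamma^{\mathrm{geom}}$, a semilinear Hilbert-90-type descent then lifts $V$ to a unique $V_{\mathrm{arith}}\in\mathrm{Rep}_{K_{\mathrm{arith}}}\Gamma^{\mathrm{arith}}$ with $V\cong K_{\infty}\otimes_{K_{\mathrm{arith}}}V_{\mathrm{arith}}$, transferring irreducibility. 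Finally, $K_{\mathrm{arith}}/K_{\mathrm{cycl}}$ is a finite Galois extension whose group is the torsion subgroup $T=(\mathbb{Z}_{p}^{\times})_{\mathrm{tors}}$, which sits canonically as a direct factor in $\Gamma^{\mathrm{arith}}\cong T\times\Gamma^{\mathrm{cycl}}$, so ordinary finite Galois descent $V_{\mathrm{arith}}\mapsto V_{\mathrm{arith}}^{T}$, with quasi-inverse $W\mapsto K_{\mathrm{arith}}\otimes_{K_{\mathrm{cycl}}}W$, yields the desired equivalence with $\IGcycl$.

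The main obstacle I anticipate is the descent from $K_{\infty}$ to $K_{\mathrm{arith}}$: the unipotency of the $\Gamma^{\mathrm{geom}}$-action is only guaranteed after restriction to an open subgroup depending on $V$, so one must argue that the residual finite-order action can be absorbed (or that $V$ is $\Gamma^{\mathrm{geom}}$-admissible in the sense of Sen) and verify that the entire descent is functorial in $V$ and compatible with the residual $\Gamma^{\mathrm{arith}}$-action; the second, finite-Galois descent step is comparatively routine.
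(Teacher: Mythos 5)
Your first equivalence is fine: the trace identity $\mathrm{tr}(\xx{j}^{n})=\tfrac1n\mathrm{tr}([\xx{0},\xx{j}^{n}])=0$ plus the common-kernel argument is just a hands-on proof of the special case of Lie's theorem that the paper invokes wholesale ($\Ll$ solvable with $[\Ll,\Ll]=\Lg$, so $\Lg$ acts nilpotently and kills any irreducible). So that half matches the paper in substance.

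The second equivalence has a genuine gap at the descent from $\kinf$ to $\karith$, and it is exactly the point you flag as ``the main obstacle'' without resolving it. Knowing that some open subgroup $\Gamma_{v}\le\RG$ acts by $\exp$ of nilpotent operators does \emph{not} give a ``Hilbert-90-type descent'': $\kinf/\karith$ is an infinite extension with group $\RG\cong\zp^{d}$, and unipotent semilinear representations of $\RG$ over $\kinf$ need not be trivializable --- splitting a unipotent extension of $\kinf$ by $\kinf$ is governed by $\cc{1}{\RG}{\kinf}$, and the $H^{1}$ of the geometric tower does not vanish (cf.\ Corollary~\ref{cor:hyo}(ii); this nonvanishing is the whole point of Hyodo's computation). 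Moreover $V|_{\RG}$ is not irreducible a priori, so you cannot argue that an irreducible unipotent object is trivial. The correct bridge, and the one the paper uses, is Lemma~\ref{lem:sen}(iv) applied with $V_{1}=\kinf$: it gives $\kinf\otimes_{\karith}V^{\RG}\cong\PP{V}^{\Lg}$, and the right-hand side is nonzero precisely by the nilpotence you have already established; since $\kinf\otimes V^{\RG}$ is an $\R$-stable subobject of $V$ (Lemma~\ref{lem:sen}(iii)), irreducibility of $V$ in $\G$ forces $\kinf\otimes V^{\RG}=V$, which is the admissibility you need. With that substitution your argument closes; the final finite descent $\karith/\kcycl$ via $T=(U')_{\mathrm{tors}}$ and ordinary Hilbert~90 is correct and is what the paper does.
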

\begin{proof}
Let $D\in\LL$. Since $\Ll$ is solvable and $[\Ll,\Ll]=\Lg$, $\Lg$ acts nilpotently on $D$ by Lie's theorem. If $D\in\IL$, $\Lg$ kills $D$, which implies the first assertion.

Let $V\in\G$. Then, by applying Lemma~\ref{lem:sen}(iv) with $V_1=\kinf$, we have $\kinf\otimes V^{\RG}=\PP{V}^{\Lg}$ and the RHS is non-zero. Hence, if $V\in\IG$, then $\kinf\otimes V^{\RG}=V$. So, the assertion follows from the canonical equivalence $\GA\cong\Gcycl$ (Hilbert~90).
\end{proof}

\begin{construction}[{\cite[p.41]{Fon1}}]\label{con:irr}
Denote by $\kbar/\gk$ the quotient of $\kbar$ by the canonical action of $\gk$. Let $\alpha=\{x_1,\dotsc,x_n\}\in\kbar/\gk$. Put $P_{\alpha}(X)=\Pi_{1\le i\le n}{(X-x_i)}\in K[X]$ and $K_{\alpha}=K[X]/P_{\alpha}(X)$ and let $\mathcal{O}_{K_{\alpha}}$ be its integer ring. Let $\RC_{\alpha}$ be the maximum subgroup of $\RC$ such that $v_p(\overline{X}\log{\chi(\gamma)})>1/(p-1)$ for all $\gamma\in\RC_{\alpha}$ where $\overline{X}$ is the image of $X$ in $K_{\alpha}$. Let $\chi_{\alpha}:\Gamma_{\alpha}^{\mathrm{cycl}}\to K_{\alpha}^{\times};\gamma\mapsto\exp{(\overline{X}\log{\chi({\gamma})})}$ be a character and $\rho_{\alpha}$ be the finite free $\mathcal{O}_{K_{\alpha}}$-module with the continuous linear action of $\RC$ defined by $\mathcal{O}_{K_{\alpha}}[\RC]\otimes_{\mathcal{O}_{K_{\alpha}}[\RC_{\alpha}]} \mathcal{O}_{K_{\alpha}}$ with the action on $\mathcal{O}_{K_{\alpha}}$ given by $\chi_{\alpha}$. Let $\V{\alpha}$ be an irreducible subrepresentation of $\kcycl\otimes_{\ok}\rho_{\alpha}\in\Gcycl$.
\end{construction}

\begin{lem}[{\cite[Th\'{e}or\`{e}me~2.14]{Fon1}}]\label{lem:fon}
${ }$
\begin{enuroman}
\item $\kcycl\otimes_{\ok}\rho_{\alpha}$ is isomorphic to a direct sum of copies of $\V{\alpha}$.

\item $\PP{\V{\alpha}}$ is semisimple in $\Lcycl$.

\item Regard $\isom{\Lcycl}$ as $\kbar/G_{\kcycl}$ by associating $\beta\in\kbar/G_{\kcycl}$ to $\kcycl[\xx{0}]/\Pi_{x\in\beta}(\xx{0}-x)$ and let $\pi:\kbar/G_{\kcycl}\to\kbar/\gk$ be the canonical projection. Then, we have $l(\PP{\V{\alpha}})=r\sum_{\beta\in\pi^{-1}(\alpha)}{\beta}\in Z(\Lcycl)$ with $0<r\le [\RC:\Gamma_{\alpha}^{\mathrm{cycl}}]$. 

\item The map $\kbar/\gk\to\isom{\Gcycl};\alpha\mapsto \V{\alpha}$ is bijective. In particular, $\kbar/\gk\to\isom{\G};\alpha\mapsto\kinf\otimes\V{\alpha},\kbar/\gk\to\isom{\GC};\alpha\mapsto\cp\otimes\V{\alpha}$ are bijective.
\end{enuroman}
\end{lem}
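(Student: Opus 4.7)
The plan is to deduce the classification from an explicit computation of $\PP{\kcycl\otimes_{\ok}\rho_\alpha}$, exploiting that $\Lc\cong\qp$ is one-dimensional abelian, so that finite-dimensional representations over $\kcycl$ are classified by the conjugacy class of the single operator $\xx{0}$. The main obstacle will be part~(i): controlling how $\gk/G_{\kcycl}$ permutes the $\xx{0}$-eigenvalues and the irreducible $\Gcycl$-summands compatibly.

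I would first compute $\PP{\rho_\alpha}$. By construction $\chi_\alpha(\gamma)=\exp(\overline{X}\log\chi(\gamma))$, so $d\chi_\alpha$ sends $\xx{0}$ to $\overline{X}\in K_\alpha$, which satisfies $P_\alpha(\overline{X})=0$. Writing $\rho_\alpha$ as the induced module from the finite-index subgroup $\RC_\alpha$ and using that $\RC$ is abelian so $\exp(t\xx{0})$ commutes with any coset representative, $\xx{0}$ acts on each summand of the induced decomposition by $\overline{X}$. After base change via
\[
\kcycl\otimes_K K_\alpha\cong\prod_{\beta\in\pi^{-1}(\alpha)}\kcycl[X]/\textstyle\prod_{x\in\beta}(X-x),
\]
the operator $\xx{0}$ on $\kcycl\otimes_{\ok}\rho_\alpha$ becomes diagonalisable (since $P_\alpha$ is separable) with eigenvalue multiset $[\RC:\RC_\alpha]\cdot\bigsqcup_{\beta\in\pi^{-1}(\alpha)}\beta$. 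This yields semisimplicity of $\PP{\kcycl\otimes\rho_\alpha}$, hence of its summand $\PP{\V{\alpha}}$, proving (ii); combined with (i) it also pins down $l(\PP{\V{\alpha}})$ in (iii), because any $\RC$-subrepresentation has an $\xx{0}$-spectrum stable under $G_{\kcycl}$ (hence a union of $\beta\in\pi^{-1}(\alpha)$), and the bound $r\le[\RC:\RC_\alpha]$ follows by counting.

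Next, for (i), I would combine this semisimplicity with Lemma~\ref{lem:sen}(v) via Galois descent. Decomposing $\kcycl\otimes\rho_\alpha=\bigoplus_i W_i$ into irreducibles of $\Gcycl$, each $\PP{W_i}$ is a sub-$\Lc$-rep of the diagonalised module; since $\gk$ acts transitively on $\alpha$ and preserves $\kcycl\otimes\rho_\alpha$ up to isomorphism, all the $\PP{W_i}$ must share the same multiset of $\xx{0}$-eigenvalues, and Lemma~\ref{lem:sen}(v) then forces the $W_i$ to be mutually isomorphic to $\V{\alpha}$. This is the technical heart of the argument and requires Fontaine's careful descent along $\gk/G_{\kcycl}$.

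Finally, for (iv), injectivity is immediate: by (iii) the set of $\beta\in\kbar/G_{\kcycl}$ appearing in $l(\PP{\V{\alpha}})$ is $\pi^{-1}(\alpha)$, whose $\pi$-image recovers $\alpha$. For surjectivity, given $V\in\IGcycl$, apply Lie's theorem to the solvable $\Lc$ to obtain an $\xx{0}$-eigenvector over an algebraic closure with eigenvalue $x\in\kbar$, and let $\alpha$ be the $\gk$-orbit of $x$; the universal property of induction from the corresponding $\RC_\alpha$-stable line produces a nonzero morphism $\rho_\alpha\to V$ in $\Gcycl$, forcing $V\cong\V{\alpha}$ by irreducibility and (i). The bijections for $\G$ and $\GC$ follow from Lemma~\ref{lem:Lie} and Theorem~\ref{thm:sen}(i).
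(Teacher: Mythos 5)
Your overall route is the same as the paper's (and Fontaine's): compute $\PP{\kcycl\otimes_{\ok}\rho_{\alpha}}$ explicitly as a direct sum of copies of the semisimple module $\kcycl[\xx{0}]/P_{\alpha}(\xx{0})$, read off (ii) and (iii), and descend to $\Gcycl$ via Lemma~\ref{lem:sen}(iv),(v). The computation of the $\xx{0}$-action on the induced module and the use of separability of $P_{\alpha}$ are fine.

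The gap is at what you yourself flag as the technical heart. The input that drives both (iii) and the isotypicity in (i) is that for \emph{every} $W\in\Gcycl$ the characteristic polynomial of $\xx{0}$ on $W$ has coefficients in $K$ (this is \cite[Proposition~2.5]{Fon1}, and it is the first sentence of the paper's proof); it holds because $\RC$ is abelian, so the semilinear action of each $\gamma\in\RC$ commutes with $\xx{0}$, hence fixes the characteristic polynomial, whose coefficients therefore lie in $\kcycl^{\RC}=K$. Your substitutes do not deliver this. Stability of the $\xx{0}$-spectrum under $G_{\kcycl}$ is automatic for a $\kcycl$-linear operator and only shows that $l(\PP{W})$ is supported on $\pi^{-1}(\alpha)$ --- not that every $\beta\in\pi^{-1}(\alpha)$ occurs, let alone with a common multiplicity $r$. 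Likewise, the remark that ``$\gk$ acts transitively on $\alpha$ and preserves $\kcycl\otimes\rho_{\alpha}$ up to isomorphism'' concerns the whole module and places no constraint on an individual irreducible summand $W_i$, which is already $\RC$-stable on its own; so the claim that all $\PP{W_i}$ have the same eigenvalue multiset is unjustified, and with it the application of Lemma~\ref{lem:sen}(v) in (i) and the formula in (iii) (which you moreover derive from (i), making the two parts lean on each other). A secondary point: you decompose $\kcycl\otimes\rho_{\alpha}$ into irreducibles of $\Gcycl$ before knowing it is semisimple there; this needs to be descended from the semisimplicity of its image under $\partial$ using Lemma~\ref{lem:sen}(iv), by the same $K$-versus-$\kcycl$ linear-algebra argument as in Lemma~\ref{lem:inj}. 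Once the $K$-rationality of the characteristic polynomial is in place, each $l(\PP{W_i})$ is a positive multiple of $\sum_{\beta\in\pi^{-1}(\alpha)}\beta$, any two $\PP{W_i},\PP{W_j}$ share an irreducible factor, and $\mathrm{Hom}_{\Gcycl}(W_i,W_j)\neq 0$ by Lemma~\ref{lem:sen}(iv) plus Schur's lemma, which is the cleanest way to finish (i).
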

\begin{proof}
As in \cite[Proposition~2.5]{Fon1}, the characteristic polynomial of $\xx{0}$ on $V\in\Gcycl$ is $K$-coefficient. In particular, $\l(V)\in Z(\Lcycl)$ is fixed by the action of $\RC$. By a simple calculation, $\PP{\kcycl\otimes\rho_{\alpha}}$ is isomorphic to a direct sum of (at most $[\RC:\Gamma_{\alpha}^{\mathrm{cycl}}]$) copies of the semisimple object $\kcycl[\xx{0}]/P_{\alpha}(\xx{0})$. (ii) and (iii) follow from these observations. (iv) follows from (iii) and Lemma~\ref{lem:sen}(iv). Since $\PP{\kcycl\otimes\rho_{\alpha}}$ is isomorphic to a direct sum of copies of $\PP{\V{\alpha}}$, we have (i) by Lemma~\ref{lem:sen}(v).
\end{proof}

\begin{rem}\label{rem:fon}
\begin{enuroman}
\item Regard $\mathbb{Z}$ as a subset of $\kbar/\gk$. Then $\karith\otimes_{\kcycl}\V{n}\cong\karith(n)$ in $\GA$ for $n\in\mathbb{Z}$.

\item $\V{\alpha}^{\vee}\cong \V{-\alpha},\V{\alpha}\otimes \V{n}\cong \V{\alpha+n}$ for $\alpha\in\kbar/\gk,n\in\mathbb{Z}$.

\item $\PP{\V{\alpha}}$ is isomorphic to a direct sum of copies of $\kcycl[\xx{0}]/P_{\alpha}(\xx{0})$ as a $\Lc$-module.
\end{enuroman}
\end{rem}

\begin{dfn}\label{def:z}
Recall the notation of Lemma~\ref{lem:dec}. By the map $\mathbb{Z}\to\isom{\G};n\mapsto\kinf(n)$ \linebreak $(\text{resp. }\mathbb{Z}\to\isom{\GGG};n\mapsto\chi_n,\mathbb{Z}\to\isom{\LK};n\mapsto K[\xx{0}]/(\xx{0}-n))$, we regard $\mathbb{Z}$ as a subset of $\isom{\G}\ (\text{resp. }\isom{\GGG},\isom{\LK})$. Note that $\GZ\ (\text{resp. }\GGZ,\LLZ)$ is an exact, Tannakian subcategory of $\G\ (\text{resp. }\GGG,\LK)$. Also, we define $\GCZ$ in a similar way.
\end{dfn}

\section{Decomposition of categories}\label{sec:Dec}
\begin{lem}\label{lem:inj}
The canonical map \[\partial:\mathrm{Ext}^1_{\G}(V_1,V_2)\to\mathrm{Ext}^1_{\LL}(\PP{V_1},\PP{V_2})\] is injective for $V_1,V_2\in\G.$
\end{lem}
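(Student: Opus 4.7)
The plan is to use Lemma~\ref{lem:tan} to reduce to the case $V_1=\kinf$, the unit object of $\G$. Since $\partial$ is a functor of Tannakian categories by Lemma~\ref{lem:sen}(i), it commutes with $\otimes$ and $(-)^{\vee}$, so the Yoneda-style identification $\mathrm{Ext}^1_{\G}(V_1,V_2)\cong\mathrm{Ext}^1_{\G}(\kinf,V_1^{\vee}\otimes V_2)$ provided by Lemma~\ref{lem:tan} is compatible with $\partial$ on both sides. It will therefore suffice to prove the injectivity of $\partial:\mathrm{Ext}^1_{\G}(\kinf,W)\to\mathrm{Ext}^1_{\LL}(\kinf,\PP{W})$ for an arbitrary $W\in\G$.

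Next I plan to reinterpret vanishing of an extension class in terms of $H^0$. Such a class is represented by a short exact sequence $\mathcal{E}:0\to W\to E\to\kinf\to 0$ in $\G$, and $\mathcal{E}$ splits if and only if the $K$-linear map $E^{\R}\to\kinf^{\R}=K$ obtained by taking $\R$-invariants is surjective; since its image is a $K$-subspace of the one-dimensional $K$-vector space $K$, this is in turn equivalent to $1$ lying in the image. Similarly, the Lie-algebra extension $\partial(\mathcal{E}):0\to\PP{W}\to\PP{E}\to\kinf\to 0$ splits in $\LL$ if and only if $\PP{E}^{\Ll}\to\kinf^{\Ll}=\kinf$ is surjective; here one uses that $\Ll$ acts trivially on the unit $\kinf$, as observed in the proof of Lemma~\ref{lem:sen}(i).

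The comparison between these two surjectivity conditions will come from Lemma~\ref{lem:sen}(iv) applied with $V_1=\kinf$, which yields, for every $V\in\G$, a natural isomorphism
\[
\kinf\otimes_K V^{\R}\stackrel{\sim}{\to}\PP{V}^{\Ll},\quad c\otimes v\mapsto cv.
\]
Applying this naturally to the projection $E\to\kinf$ (so that the codomain becomes $\kinf\otimes_K K=\kinf$), the map $\PP{E}^{\Ll}\to\kinf$ is identified with the base change $\mathrm{id}_{\kinf}\otimes(E^{\R}\to K)$. Since $\kinf$ is faithfully flat over $K$, this base change is surjective if and only if $E^{\R}\to K$ is surjective. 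Hence $\partial(\mathcal{E})=0$ in $\mathrm{Ext}^1_{\LL}$ will force $\mathcal{E}=0$ in $\mathrm{Ext}^1_{\G}$, establishing injectivity.

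No serious obstacle is expected: once Lemma~\ref{lem:sen}(iv) is granted, the argument is essentially formal. The only point deserving attention is the naturality in $V$ of the isomorphism in that lemma, but this is transparent from the explicit formula $c\otimes v\mapsto cv$ and the functoriality of $\partial$.
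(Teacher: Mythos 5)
Your argument is correct and rests on the same two pillars as the paper's own proof: the base-change isomorphism of Lemma~\ref{lem:sen}(iv) and the fact that solvability of a $K$-linear condition is unaffected by the scalar extension $\kinf/K$. The paper skips your preliminary reduction to $V_1=\kinf$ and instead encodes the existence of a section of $\pi$ as a linear system $P\mathbf{x}={}^t(1,0,\dotsc,0)$ with $P$ a matrix over $K$, but this is only a cosmetic difference from your $H^0$-surjectivity formulation.
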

\begin{proof}
Let $\mathcal{E}$ be an exact sequence
\[
\xymatrix{0\ar[r]&V_2\ar[r]&V\ar[r]^{\pi}&V_1\ar[r]&0.}
\]
Choose basis $\{\alpha_i\}_{1\le i\le n}$ of $\mathrm{Hom}_{\G}(V_1,V)$ and $\{\beta_i\}_{1\le i\le m}$ of $\mathrm{End}_{\G}(V_1)$ with $\beta_1=\mathrm{id}_{V_1}.$ Let $P\in \mathrm{M}_{mn}(K)$ be the coefficient matrix of $\{\pi\circ\alpha_i\}_{1\le i\le n}$ with respect to $\{\beta_i\}_{1\le i\le m}.$ Then, we have 
\begin{align*}
\partial ([\mathcal{E}])=0&\Leftrightarrow\partial(\pi)\text{ has a section in } \LL\Leftrightarrow P{\bf x}={}^t(1,0,\cdots,0)\text{ for some }{\bf x}\in\kinf^n\\
&\Leftrightarrow P{\bf x}={}^t(1,0,\cdots,0)\text{ for some }{\bf x}\in K^n\Leftrightarrow\pi\text{ has a section in }\G \Leftrightarrow [\mathcal{E}]=0.
\end{align*}
\end{proof}

\begin{lem}[cf. Theorem~\ref{thm:hyo}]\label{lem:caL}
Let $\delta_0:\Lk\to K$ be the $K$-linear map defined by $(\xx{0},\xx{1},\dotsc,\xx{d})\mapsto (1,0,\dotsc,0)$ and for $1\le j\le d$, let $\delta_j:\Lk\to K[\xx{0}]/(\xx{0}-1)$ be the $K$-derivation defined by $(\xx{0},\dotsc,\xx{j},\dotsc,\xx{d})\mapsto (0,\dotsc,1,\dotsc,0)$ $(\text{cf. Definition}~\ref{dfn:der})$.
\begin{enuroman}
\item For $\alpha\in(\kbar/\gk)\setminus\{0\}$, $\coh{0}{\Lk}{K[\xx{0}]/P_{\alpha}(\xx{0})}=0$ and $\coh{0}{\Lk}{K}\cong K$.

\item For $q>0,\alpha\in\kbar/\gk$, we have
\[
\dim_{K}{\coh{q}{\Lk}{K[\xx{0}]/P_{\alpha}(\xx{0})}}=
\begin{cases}
\binom{d}{\alpha}&\alpha=q,q-1\\
\ 0&otherwise .
\end{cases}
\]

\item $\coh{1}{\Lk}{K}$ is generated by $\delta_0$ and $\coh{1}{\Lk}{K[\xx{0}]/(\xx{0}-1)}$ is generated by $\delta_1,\dotsc,\delta_d$.

\item For $q>0$, we have canonical isomorphisms
\begin{align*}
\wedge^q\coh{1}{\Lk}{K[\xx{0}]/(\xx{0}-1)}&\stackrel{\mathrm{cup}}{\to}\coh{q}{\Lk}{K[\xx{0}]/(\xx{0}-q)}\\
\coh{1}{\Lk}{K}\otimes(\wedge^{q-1}\coh{1}{\Lk}{K[\xx{0}]/(\xx{0}-1)})&\stackrel{\mathrm{cup}}{\to}\coh{q}{\Lk}{K[\xx{0}]/(\xx{0}-(q-1))}.
\end{align*}
\end{enuroman}
\end{lem}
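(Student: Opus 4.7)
The plan is to compute everything directly from the Chevalley--Eilenberg complex of Definition~\ref{dfn:CE} and Lemma~\ref{lem:CE} applied to the basis $\xx{0},\xx{1},\dotsc,\xx{d}$ of $\Lk$. Write $M_\alpha := K[\xx{0}]/P_\alpha(\xx{0})$; on this module $\xx{0}$ acts by multiplication while $\xx{j}$ acts as zero for $1\le j\le d$. Note also that $\Lg := \bigoplus_{j=1}^{d} K\cdot\xx{j}$ is an abelian ideal of $\Lk$ and $\Lk = K\cdot\xx{0}\oplus\Lg$ as $K$-vector spaces.

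First I would use the corresponding decomposition of cochains
\[
M_\alpha\otimes_K\wedge^\bullet\Lk^* \;=\; \bigl(M_\alpha\otimes_K\wedge^\bullet\Lg^*\bigr)\;\oplus\;e_0^*\wedge\bigl(M_\alpha\otimes_K\wedge^\bullet\Lg^*\bigr),
\]
where $\{e_i^*\}_{0\le i\le d}$ is dual to $\{\xx{i}\}$. A direct calculation with Definition~\ref{dfn:CE}, using that the only nontrivial brackets are $[\xx{0},\xx{j}]=\xx{j}$, shows that the differential kills the second summand and sends $m\otimes e_I^*$ (with $I\subset\{1,\dotsc,d\}$, $|I|=q$) to $(\xx{0}-q)m\otimes e_0^*\wedge e_I^*$. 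Consequently
\[
\coh{q}{\Lk}{M_\alpha} \;\cong\; \ker\bigl(\xx{0}-q\bigm| M_\alpha\otimes\wedge^q\Lg^*\bigr)\;\oplus\;\mathrm{coker}\bigl(\xx{0}-(q-1)\bigm| M_\alpha\otimes\wedge^{q-1}\Lg^*\bigr).
\]

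Next I would read off (i) and (ii). Since $P_\alpha\in K[X]$ has simple roots forming the $\gk$-orbit $\alpha\subset\kbar$, the operator $\xx{0}-c$ on $M_\alpha$ has nonzero kernel (equivalently, cokernel) iff $c\in K$ is a root of $P_\alpha$, which forces $\alpha=\{c\}$; in that case both the kernel and the cokernel are one-dimensional. Multiplying by $\dim_K\wedge^q\Lg^*=\binom{d}{q}$ yields the dimension formula. Specializing to $q=1$ and inspecting the relevant one-cocycles identifies $\delta_0$ with a generator of the cokernel summand and $\delta_1,\dotsc,\delta_d$ with a basis of the kernel summand, giving (iii). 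For (iv), the cup product on the Chevalley--Eilenberg complex is wedge in the exterior algebra $\wedge^\bullet\Lk^*$, so each cocycle $\delta_{i_1}\cup\dotsb\cup\delta_{i_q}$ (with $1\le i_1<\dotsb<i_q\le d$) corresponds to $1\otimes e_{i_1}^*\wedge\dotsb\wedge e_{i_q}^*$ and furnishes the basis of the kernel summand for $M_q$; similarly $\delta_0\cup\delta_{i_1}\cup\dotsb\cup\delta_{i_{q-1}}$ yields the basis $1\otimes e_0^*\wedge e_{i_1}^*\wedge\dotsb\wedge e_{i_{q-1}}^*$ of the cokernel summand for $M_{q-1}$. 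Both cup product maps are therefore isomorphisms by dimension count.

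The main obstacle is sign bookkeeping: establishing the normal form $m\otimes e_I^*\mapsto(\xx{0}-q)m\otimes e_0^*\wedge e_I^*$ for the differential requires tracking the signs from the first sum in Definition~\ref{dfn:CE} (only the $\xx{i_k}=\xx{0}$ term survives, contributing $\xx{0}m$) and from the bracket terms $(-1)^{i+j}f([\xx{i_k},\xx{i_l}]\wedge\dotsb)$ with $\xx{i_k}=\xx{0}$, where reordering the resulting wedge contributes an additional sign so that $q$ identical contributions accumulate to $-qm$. Once this normal form is in place, all four statements follow mechanically.
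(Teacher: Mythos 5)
Your computation of the Chevalley--Eilenberg differential (only the $\xx{0}$-term of the first sum and the $q$ bracket terms $[\xx{0},\xx{i_k}]=\xx{i_k}$ survive, yielding $(\xx{0}-q)$ on the $\wedge^q\Lg^*$-part and zero on the $e_0^*\wedge\wedge^{q-1}\Lg^*$-part) is exactly the identity the paper's proof records, and reading off (i)--(iv) as kernels and cokernels of $\xx{0}-q$ with the explicit cocycles $e_I^*$ and $e_0^*\wedge e_I^*$ is the same argument. The proposal is correct and follows essentially the same route as the paper, merely packaging the computation as an explicit direct-sum decomposition of the complex.
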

\begin{proof}
Recall the notation of Definition~\ref{dfn:CE}. Let $q\ge 0$ and let $D\in\LK$ which is killed by $\LgK$. Then, for $\delta\in\mathrm{Hom}_{K}(\wedge^q\Lk,D)$, we have
\begin{gather*}
d^q\!\delta(\xx{0}\wedge\xx{i_1}\wedge\dotsb\wedge\xx{i_q})=(\xx{0}-q)\delta(\xx{i_1}\wedge\dotsb\wedge\xx{i_q})\\
d^q\!\delta(\xx{i_1}\wedge\dotsb\wedge\xx{i_{q+1}})=0
\end{gather*}
for $1\le i_1,\dotsc,i_{q+1}\le d$. Hence, together with Lemma~\ref{lem:CE}, (i) follows immediately and we have \linebreak$\coh{q}{\Lk}{K[\xx{0}]/P_{\alpha}(\xx{0})}=0$ for $\alpha\neq q,q-1$ with $q>0$. Moreover, for $q>0$, $\{\delta_{i_1}\cup\dotsb\cup\delta_{i_q}\}_{1\le i_1<\dotsb <i_q\le d}$ is a basis of $\coh{q}{\Lk}{K[\xx{0}]/(\xx{0}-q)}$ and $\{\delta_0\cup\delta_{i_1}\cup\dotsb\cup\delta_{i_{q-1}}\}_{1\le i_1<\dotsb <i_{q-1}\le d}$ is a basis of $\coh{q}{\Lk}{K[\xx{0}]/(\xx{0}-(q-1))}$, which implies the rest of the assertion.
\end{proof}

\begin{cor}\label{cor:calirred}
Let $\mathbb{Z}'=(\kbar/\gk)\setminus\mathbb{Z}$ and let $(\alpha,\beta)\in\mathbb{Z}\times\mathbb{Z}'$ or $\mathbb{Z}'\times\mathbb{Z}$. Then
\begin{gather*}
\mathrm{Ext}^{i}_{\G}(\kinf\otimes\V{\alpha},\kinf\otimes\V{\beta})=0\\
\mathrm{Ext}^{i}_{\LL}(\kinf[\xx{0}]/P_{\alpha}(\xx{0}),\kinf[\xx{0}]/P_{\beta}(\xx{0}))=0
\end{gather*}
for $i=0,1$.
\end{cor}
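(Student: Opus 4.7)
The plan is to prove the two vanishings separately, handling the Lie algebra statement first and then deducing the Galois one from it via Lemma~\ref{lem:inj}. The crucial feature of the hypothesis is that $\alpha\ne\beta$ in $\kbar/\gk$ and, more importantly, that the set of differences $\{-x+y:x\in\alpha,y\in\beta\}$ consists entirely of non-integer elements of $\kbar$.

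For the Lie algebra Exts, I would apply Lemma~\ref{lem:tan} to rewrite
\[
\mathrm{Ext}^i_{\LL}\bigl(\kinf[\xx{0}]/P_\alpha(\xx{0}),\kinf[\xx{0}]/P_\beta(\xx{0})\bigr)\cong\coh{i}{\Ll}{M},
\]
where $M=(\kinf[\xx{0}]/P_\alpha(\xx{0}))^\vee\otimes_{\kinf}\kinf[\xx{0}]/P_\beta(\xx{0})$. Since $\xx{1},\dotsc,\xx{d}$ annihilate both tensor factors, they kill $M$, so the $\Ll$-action on $M$ factors through the line $\kinf\xx{0}$. A direct eigenvalue analysis over $\kbar$ (using that the factor with an integer parameter is one-dimensional) identifies $M\cong\kinf[\xx{0}]/P_\gamma(\xx{0})$ for the single Galois orbit $\gamma=\beta-n$ (if $\alpha=\{n\}$) or $\gamma=n-\alpha$ (if $\beta=\{n\}$); in either case $\gamma\in\mathbb{Z}'$, so in particular $\gamma\notin\{0,1\}$. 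Base-changing Lemma~\ref{lem:caL}(i),(ii) from $K$ to $\kinf$ (using that Lie algebra cohomology commutes with scalar extension) then yields $\coh{0}{\Ll}{M}=\coh{1}{\Ll}{M}=0$.

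For the Galois Exts: the case $i=0$ is immediate, since $\kinf\otimes\V{\alpha}$ and $\kinf\otimes\V{\beta}$ are non-isomorphic irreducibles by Lemma~\ref{lem:fon}(iv). For $i=1$, Remark~\ref{rem:fon}(iii) together with Lemma~\ref{lem:Lie} (which forces $\Lg$ to act trivially on any irreducible of $\LL$) identifies $\PP{\kinf\otimes\V{\alpha}}$ with a direct sum of copies of $\kinf[\xx{0}]/P_\alpha(\xx{0})$, viewed as an $\Ll$-module with trivial $\Lg$-action, and similarly for $\beta$. Hence the Lie algebra computation above gives $\mathrm{Ext}^1_{\LL}(\PP{\kinf\otimes\V{\alpha}},\PP{\kinf\otimes\V{\beta}})=0$, and Lemma~\ref{lem:inj} transfers this to the vanishing of $\mathrm{Ext}^1_{\G}(\kinf\otimes\V{\alpha},\kinf\otimes\V{\beta})$. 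The delicate point I anticipate is confirming that $M$ is genuinely a single $\kinf[\xx{0}]/P_\gamma(\xx{0})$ rather than a more complicated module: a priori the eigenvalues of $\xx{0}$ on $M$ could split into several Galois orbits, but the integer hypothesis on one of $\alpha,\beta$ rigidifies them into a single orbit lying in $\mathbb{Z}'$, which is precisely the shape required for Lemma~\ref{lem:caL} to apply.
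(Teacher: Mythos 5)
Your argument is correct and is essentially the paper's own proof: both reduce via Lemma~\ref{lem:tan} to the Lie algebra cohomology of the single module $\kinf[\xx{0}]/P_{\beta-\alpha}(\xx{0})$ with $\beta-\alpha\in\mathbb{Z}'$, kill it by Lemma~\ref{lem:caL}(i),(ii), and transfer the vanishing to the Galois side through Remark~\ref{rem:fon}(iii) and Lemma~\ref{lem:inj}. Your extra care about why the internal Hom is a single Galois orbit (because one factor is one-dimensional) is exactly the point implicit in the paper's isomorphism $(K[\xx{0}]/P_{\alpha}(\xx{0}))^{\vee}\otimes K[\xx{0}]/P_{\beta}(\xx{0})\cong K[\xx{0}]/P_{\beta-\alpha}(\xx{0})$.
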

\begin{proof}
Since $(K[\xx{0}]/P_{\alpha}(\xx{0}))^{\vee}\otimes{K[\xx{0}]/P_{\beta}(\xx{0})}\cong K[\xx{0}]/P_{\beta-\alpha}(\xx{0})$, the assertion follows from Lemma~\ref{lem:tan}, Remark~\ref{rem:fon}(iii), Lemma~\ref{lem:inj} and Lemma~\ref{lem:caL}(i),(ii).
\end{proof}

Note that $\kbar/\gk\to\isom{\LK};\alpha\mapsto K[\xx{0}]/P_{\alpha}(\xx{0})$ is a bijection by Lie's theorem as in Lemma~\ref{lem:Lie}. The following corollary is an easy consequence of Lemma~\ref{lem:dec} and Lemma~\ref{lem:fon}(iv), Corollary~\ref{cor:calirred}.

\begin{cor}\label{cor:dec}
Let $p^{\mathbb{Z}},p^{\mathbb{Z}'}$ be as in Lemma~\ref{lem:dec}. We have canonical equivalences of abelian categories
\begin{align*}
(p^{\mathbb{Z}},p^{\mathbb{Z}'}):\GC&\cong\GCZ\times\GCZp\\
(p^{\mathbb{Z}},p^{\mathbb{Z}'}):\LK&\cong\LLZ\times\LLZp.
\end{align*}
\end{cor}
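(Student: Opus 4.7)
The plan is to invoke Lemma~\ref{lem:dec} twice, once with $\mathcal{A}=\GC$ and once with $\mathcal{A}=\LK$, taking in each case $Z=\mathbb{Z}\subset\kbar/\gk$ under the bijections $\isom{\GC}\cong\kbar/\gk$ from Lemma~\ref{lem:fon}(iv) and $\isom{\LK}\cong\kbar/\gk$ from Lie's theorem (as noted just before the statement). All objects in question are finite-dimensional, hence of finite length, so the only substantive hypothesis of Lemma~\ref{lem:dec} left to check is the vanishing of $\mathrm{Ext}^i$ for $i=0,1$ between an irreducible indexed by $\mathbb{Z}$ and one indexed by $\mathbb{Z}'=(\kbar/\gk)\setminus\mathbb{Z}$. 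Corollary~\ref{cor:calirred} supplies exactly these vanishings, but inside $\G$ and $\LL$ rather than inside $\GC$ and $\LK$, so the task reduces to transferring them.

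For $\GC$ the transfer is immediate: Brinon's equivalence in Theorem~\ref{thm:sen} realizes $\GC\to\G$, $V\mapsto(V^H)^{\mathrm{f}}$, as an equivalence of abelian categories that sends $\cp\otimes\V{\alpha}$ to $\kinf\otimes\V{\alpha}$ for every $\alpha\in\kbar/\gk$, and an equivalence of abelian categories preserves Yoneda $\mathrm{Ext}^i$; so the vanishings in $\G$ furnished by Corollary~\ref{cor:calirred} carry over to $\GC$ unchanged.

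For $\LK$ I would exploit that Lie algebra cohomology commutes with extension of the ground field, which is visible from the Chevalley--Eilenberg complex (Definition~\ref{dfn:CE}). Combined with the standard identification $\mathrm{Ext}^i_{\LK}(V_1,V_2)\cong\coh{i}{\Lk}{V_1^\vee\otimes V_2}$ (and the analogous one over $\kinf$), this yields
\[
\kinf\otimes_K\mathrm{Ext}^i_{\LK}(V_1,V_2)\cong\mathrm{Ext}^i_{\LL}(\kinf\otimes V_1,\kinf\otimes V_2)
\]
for all $V_1,V_2\in\LK$. Specializing to $V_i=K[\xx{0}]/P_{\alpha_i}(\xx{0})$ and using faithful flatness of $\kinf/K$, the vanishing supplied by Corollary~\ref{cor:calirred} on the right hand side forces the vanishing on the left.

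Once both vanishings are in hand, Lemma~\ref{lem:dec} produces the two claimed equivalences simultaneously. There is no real obstacle in the argument; the only step requiring genuine care is the double transfer of $\mathrm{Ext}$-vanishing just described, and even that is essentially bookkeeping.
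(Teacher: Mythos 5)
Your proposal is correct and follows exactly the route the paper intends: the paper's own proof is the one-line remark that the corollary follows from Lemma~\ref{lem:dec}, Lemma~\ref{lem:fon}(iv), and Corollary~\ref{cor:calirred}, and you have simply filled in the bookkeeping of transferring the $\mathrm{Ext}$-vanishing from $\G$ and $\LL$ to $\GC$ and $\LK$ (via Theorem~\ref{thm:sen} and base change of the Chevalley--Eilenberg complex, respectively). No gaps.
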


Our aim of the rest of this section is to prove that $\GZ$ is equivalent to $\LLZ$ as a Tannakian category.
\begin{prop}\label{prop:H1}
For $V\in\GGZ$, the canonical map \[\cc{1}{\R}{V}\to\cc{1}{\R}{V_{\kinf}}\] is an isomorphism.
\end{prop}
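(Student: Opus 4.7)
The plan is to proceed by dévissage to the case $V = K(n) = \chi_n$ with $n \in \mathbb{Z}$, and then to compute both sides by Hochschild-Serre and match via explicit cocycles.

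First I would observe that every $V \in \GGZ$ is a successive extension of characters $\chi_n$, $n \in \mathbb{Z}$, and that the base change $V \mapsto V_\kinf$ is exact. Each short exact sequence in $\GGZ$ therefore yields a commutative ladder of long exact sequences of continuous $\R$-cohomology. By induction on the length of $V$ combined with the five lemma, it is enough to prove the stronger statement that $\cc{i}{\R}{K(n)} \to \cc{i}{\R}{\kinf(n)}$ is an isomorphism in every degree $i \ge 0$ for every $n \in \mathbb{Z}$; treating all degrees simultaneously avoids the usual boundary issues in the five lemma.

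Next, the Hochschild-Serre spectral sequence for $1 \to \RG \to \R \to \RA \to 1$ applies on both sides. Since $\chi$ is trivial on $\RG \cong \zp^d$, on the left one has $\mathrm{H}^*(\RG, K(n)) \cong \bigwedge^* \mathrm{Hom}_{\mathrm{cont}}(\RG, K) \otimes_K K(n)$ with $\RA$ acting on the $i$-th exterior power via $\chi^{-i}$. Since $\RA \cong U'$ has virtual $p$-cohomological dimension one and $\mathrm{H}^j(\RA, K(m))$ is nonzero precisely at $(j,m) \in \{(0,0),(1,0)\}$, one obtains $\dim_K \cc{q}{\R}{K(n)} = \binom{d}{n}$ for $n \in \{q, q-1\}$ and zero otherwise, matching Lemma~\ref{lem:caL}(ii). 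On the right, the same analysis with $\kinf(n)$ coefficients, combined with the direct Koszul computation $\mathrm{H}^*(\RG, \kinf) \cong \karith \otimes_K \bigwedge^* \mathrm{Hom}_{\mathrm{cont}}(\RG, K)$ (using the monomial decomposition $\kinf = \bigoplus_\alpha \karith \cdot t^\alpha$, on the components of which $\rr{j} - 1$ is invertible whenever $\alpha \ne 0$) and the Tate-Sen identification $\mathrm{H}^*(\RA, \karith(m)) \cong \mathrm{H}^*(\RA, K(m))$, yields the same dimensions, in agreement with Hyodo's Theorem~\ref{thm:hyo}.

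To promote numerical equality to the statement that the natural map is an isomorphism, I would match explicit generators in degree one and handle higher degrees via cup products (Theorem~\ref{thm:hyo}(iv) and Lemma~\ref{lem:caL}(iv)). For $n = 0$, the class $\log\chi$ generates $\cc{1}{\R}{K}$ and inflates under $\R \to \gk$ and $\kinf \hookrightarrow \cp$ to the Hyodo generator of $\cc{1}{\gk}{\cp}$, so its image in $\cc{1}{\R}{\kinf}$ is nonzero. For $n = 1$, the cocycles $\ee^{s_j}$ ($j = 1, \dotsc, d$) factor through $\R$, span $\cc{1}{\R}{K(1)}$, and inflate to the Hyodo basis of $\cc{1}{\gk}{\cp(1)}$ in Theorem~\ref{thm:hyo}(iii); they therefore remain linearly independent in $\cc{1}{\R}{\kinf(1)}$. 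For $n \notin \{0, 1\}$ both sides vanish.

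The hard part will be the Tate-Sen identification $\mathrm{H}^*(\RA, \karith(m)) \cong \mathrm{H}^*(\RA, K(m))$ for the \emph{uncompleted} cyclotomic extension: the classical Tate-Sen statement is phrased for the completion $\widehat{\karith}$, and promoting it to the uncompleted $\karith$ requires verifying that the decomposition $\karith = K \oplus \ker R$ (where $R$ is the normalized trace) restricts from $\widehat{\karith}$ and that $\ker R$ remains cohomologically trivial for $\RA$ — a continuity and density argument that constitutes the heart of the technical work.
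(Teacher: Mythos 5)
Your overall strategy is genuinely different from the paper's, and as written it has a real gap on the target side of the map. The source side of your computation ($\cc{q}{\R}{K(n)}$ via Hochschild--Serre for $1\to\RG\to\R\to\RA\to 1$, with finite-dimensional coefficients) is fine. The problem is $\cc{q}{\R}{\kinf(n)}$: here the coefficients carry the valuation topology and are neither discrete nor complete, so the ``monomial decomposition'' step is not a formal Koszul computation. Continuous cohomology of $\RG\cong\zp^d$ does not commute with the colimit $\kinf=\varinjlim_m \karith(t_1^{p^{-m}},\dotsc,t_d^{p^{-m}})$ (a continuous cochain has compact, but not finite-level, image), and the vanishing of the non-invariant isotypic part requires controlling inverses of $\rr{j}-1$ uniformly --- exactly the normalized-trace estimates of Lemma~\ref{lem:Tr}, which the paper only establishes for the completed/torsion modules. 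You correctly identify the analogous decompletion problem for $\cc{*}{\RA}{\karith(m)}$ as ``the heart of the technical work,'' but you leave it unresolved, and it is not a routine density argument. Moreover, your d\'evissage in \emph{all} degrees forces you to prove far more than the proposition asserts: in degrees $q\ge 2$ a dimension count does not show the map is an isomorphism, so you would also need the cup-product structure of $\cc{q}{\R}{\kinf(q)}$ and $\cc{q}{\R}{\kinf(q-1)}$, i.e.\ an uncompleted analogue of Theorem~\ref{thm:hyo}(iv), which is nowhere established. In short, your route amounts to re-proving Hyodo's theorem with uncompleted coefficients, and that is precisely the hard content you have not supplied.

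The paper avoids all of this by staying in degree $\le 1$, where the decompletion is free: by Theorem~\ref{thm:sen} the categories $\GC$, $\Ghat$ and $\G$ are equivalent, so $\cc{1}{\R}{\kinf(n)}\cong\mathrm{Ext}^1_{\G}(\kinf,\kinf(n))\cong\mathrm{Ext}^1_{\GC}(\cp,\cp(n))\cong\cc{1}{\gk}{\cp(n)}$, and Hyodo's completed computation can be quoted directly. The isomorphism itself is then obtained by an entirely different mechanism: the square comparing $\mathrm{Ext}^1_{\GGZ}(K,V)\to\mathrm{Ext}^1_{\G}(\kinf,V_{\kinf})$ with $\mathrm{Ext}^1_{\LLZ}(K,\PP{V})\to\mathrm{Ext}^1_{\LL}(\kinf,\PP{V}_{\kinf})$, in which the top arrow is an isomorphism because $\partial:\GGZ\to\LLZ$ is an equivalence (Proposition~\ref{prop:equ}), the right arrow is base change for Lie algebra cohomology, and the bottom arrow is injective by Lemma~\ref{lem:extone} (whose proof does use your explicit generators $\log\chi\mapsto\delta_0$, $\ee^{s_j}\mapsto\delta_j$, but matched against the Chevalley--Eilenberg computation of Lemma~\ref{lem:caL} rather than against a second group-cohomology computation). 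If you want to salvage your approach, either import the degree-$1$ decompletion from Theorem~\ref{thm:sen} as the paper does, or restrict the d\'evissage to $\mathrm{Ext}^1$ and supply genuine proofs of the uncompleted Tate--Sen and $\RG$-vanishing statements in degrees $\le 2$.
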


We prepare for the proof of this proposition:
\begin{construction}
We define a functor of Tannakian categories $\mathbb{V}:\LLZ\to\GGZ$ as follows. For $D\in\LLZ$, $\mathbb{V}(D)$ be the vector space $D$ with an action of $\R$ given by, for $v\in D$, $\gamma(v)=\exp{(\log{\gamma})}(v)$. We will see that $\mathbb{V}$ is well-defined: We have only to prove that the RHS converges.

Let $\xx{0}=\xx{0}^{\mathrm{ss}}+\xx{0}^{\mathrm{nil}}$ be the Jordan decomposition of $\xx{0}$ on $D$. Take an $\xx{0}$-stable flag of $D$. By taking a basis associated to this flag, regard $\mathfrak{gl}(D)$ as $\mathrm{M}_{r}(K)$ with $r=\dim_{K}{D}$ as a $K$-vector space and $v$ be the valuation $v(X)=\inf_{1\le i,j\le r}{v_p(x_{i,j})}$ for $X=(x_{i,j})_{1\le i,j\le r}\in \mathrm{M}_r(K)$. Note that $v(\xx{0}^{\mathrm{ss}})\ge 0$.

Let $\log{\R}:=\mathrm{Im}(\log:\R\to\LLl)\cong2p^c\zp\ltimes\zp^d\subset\qp\ltimes\qp^d$. To check the convergence, we will see that, for $\mathfrak{X}\in\log{\R}$, $\lim{v((n!)^{-1}\mathfrak{X}^n)}=\infty$. Let $\mathfrak{X}=i_0\xx{0}+i_1\xx{1}+\dotsb+i_d\xx{d}$ with $i_0\in2p^c\zp,i_1\dotsc,i_d\in\zp$. Take $M\gg 0$ such that $(\xx{0}^{\mathrm{nil}})^M=0,(\xx{j})^M=0$ for $1\le j\le d$. We can see that, for $n\gg 0$, a product of length $n$ consisting of $i_0\xx{0},\dotsc,i_d\xx{d}$ is written in the form, for some $k>n-dM$,
\begin{multline*}
(\text{a product of length }n-k\text{  consisting of } i_1\xx{1},\dotsc,i_d\xx{d})\\ \times (\text{a product of length }k \text{  consisting of the elements of a form }i_0\xx{0}+i_0a,\ a\in\mathbb{Z}).
\end{multline*} 
This follows from the relation $\xx{0}\xx{j}=\xx{j}(\xx{0}+1)$ for $1\le j\le d$ and the commutativity of $\xx{1},\dotsc,\xx{d}$. And, for $k\in\mathbb{N}$, $\text{a product of length }k\text{ consists of }i_0\xx{0}+i_0a,a\in\mathbb{Z}$ is written as a linear combination over $\mathbb{Z}$ of the elements $\{i_0^k(\xx{0}^{\mathrm{nil}})^{m}(\xx{0}^{\mathrm{ss}})^{k-m}|0\le m<M\}$. Hence, we have
\[
v\left(\frac{1}{n!}\mathfrak{X}^n\right)=v\left(\frac{1}{n!}(i_0\xx{0}+\dotsb+i_d\xx{d})^n\right)\ge C+(n-dM)v_p(2p^c)-v_p(n!)
\]
with the constant $C:=\inf_{0\le j_0,\dotsc,j_d< M}{v(\xx{1}^{j_1}\dotsb\xx{d}^{j_d}(\xx{0}^{\mathrm{nil}})^{j_0})}$, which implies the assertion.
\end{construction}

\begin{prop}\label{prop:equ}
$\partial:\GGZ\to\LLZ$ gives an equivalence of Tannakian categories with a quasi-inverse $\mathbb{V}$.
\end{prop}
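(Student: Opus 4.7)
The plan is to exhibit $\mathbb{V}$ as a quasi-inverse to $\partial$ by constructing natural isomorphisms $\partial \circ \mathbb{V} \cong \mathrm{id}_{\LLZ}$ and $\mathbb{V} \circ \partial \cong \mathrm{id}_{\GGZ}$, and by checking Tannakian compatibility on both sides.

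First I would complete the construction of $\mathbb{V}$. The convergence estimate above provides a continuous $K$-linear action of the pro-$p$ open subgroup $U \ltimes \zp^d \subset \R$ on $D$, which is a group homomorphism by the Baker--Campbell--Hausdorff formula on this analytic open subgroup. To extend to $\R = T \ltimes (U \ltimes \zp^d)$, I would exploit that $D \in \LLZ$ forces the eigenvalues of $\xx{0}^{\mathrm{ss}}$ to be integers: then $\tau \mapsto \chi(\tau)^{\xx{0}^{\mathrm{ss}}}$ defines a continuous action of $T$, and the semidirect-product compatibility $\tau \exp(\mathfrak{X})\tau^{-1} = \exp(\mathrm{Ad}(\tau)\mathfrak{X})$ is checked directly from $[\xx{0},\xx{j}] = \xx{j}$. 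The Jordan--H\"{o}lder factors of $\mathbb{V}(D)$ are the characters $\chi_n$ indexed by the integer eigenvalues of $\xx{0}^{\mathrm{ss}}$, placing $\mathbb{V}(D)$ in $\GGZ$; tensor and dual compatibility of $\mathbb{V}$ follow from $\exp(\mathfrak{X}\otimes 1 + 1\otimes \mathfrak{X}) = \exp(\mathfrak{X})\otimes\exp(\mathfrak{X})$ and analogous identities.

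The round-trip $\partial\circ\mathbb{V} \cong \mathrm{id}_{\LLZ}$ is immediate from Definition~\ref{def:dif}, since differentiating $\gamma \mapsto \exp(\log\gamma)$ at the identity returns the original Lie-algebra structure on $D$.

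The main step is $\mathbb{V} \circ \partial \cong \mathrm{id}_{\GGZ}$. For $V \in \GGZ$, I want $\mathrm{id}_V \colon V \to \mathbb{V}(\partial V)$ to be $\R$-equivariant. Applying Lemma~\ref{lem:sen}(ii) to a finite $K$-basis of $V$ produces an open subgroup $\Gamma' \trianglelefteq_{\mathrm{o}} \R$ on which the two actions of $\R$ agree. Since both are continuous homomorphisms $\R \to GL_K(V)$ with the same differential at $e$, the local diffeomorphism property of $\exp$ propagates the agreement to the whole pro-$p$ subgroup $U \ltimes \zp^d$. For the torsion complement $T$, take $\gamma_0 = \exp(2p^c \xx{0}) \in U$; since $T$ commutes with $\gamma_0$ in $U' = T \times U$, the generalised eigenspace decomposition $V = \bigoplus_n V_{[n]}$ of $V$ under $\gamma_0$ is $T$-stable. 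On each $V_{[n]}$ every Jordan--H\"{o}lder factor must be $\chi_n$ (because $\chi(\gamma_0)$ is not a root of unity), so by semisimplicity of $K$-representations of the finite group $T$ the action of $\tau \in T$ on $V_{[n]}$ is the scalar $\chi(\tau)^n$; this matches the action on $\mathbb{V}(\partial V)$ since $V_{[n]}$ coincides with the $n$-eigenspace of $\xx{0}^{\mathrm{ss}}$.

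The principal obstacle is this final matching of $T$-actions, which crucially uses the integer-eigenvalue structure of $\LLZ$; outside this subcategory, $\chi(\tau)^{\xx{0}^{\mathrm{ss}}}$ is not defined and the functor $\mathbb{V}$ cannot even be formulated on the torsion part of $\R$.
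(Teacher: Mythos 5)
Your strategy is genuinely different from the paper's: you try to exhibit the natural isomorphism $\mathbb{V}\circ\partial\cong\mathrm{id}_{\GGZ}$ directly, whereas the paper argues categorically --- full faithfulness of $\mathbb{V}$ is deduced from the faithfulness of the restriction functor $\GGZ\to\mathrm{Rep}^{\mathbb{Z}}_K\Gamma^0$ to the image $\Gamma^0$ of $\exp$ (on which $\mathbb{V}$ restricts to an equivalence), and essential surjectivity is reduced by d\'evissage to the surjectivity of $\mathrm{Ext}^1_{\LLZ}(D_1,D_2)\to\mathrm{Ext}^1_{\GGZ}(\mathbb{V}(D_1),\mathbb{V}(D_2))$, which follows from inflation--restriction and $\mathrm{Hom}(\R/\Gamma^0,K)=0$. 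Your handling of $\partial\circ\mathbb{V}=\mathrm{id}$ and of the torsion part $T$ (via $\tau\mapsto\chi(\tau)^{\xx{0}^{\mathrm{ss}}}$, using that the eigenvalues of $\xx{0}^{\mathrm{ss}}$ are integers) is sound, and is in fact a useful explicit completion of the construction of $\mathbb{V}$ on all of $\R$.

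There is, however, a gap in the propagation step on the pro-$p$ part. You assert that two continuous homomorphisms $\R\to GL_K(V)$ which agree on an open subgroup and have the same differential at $e$ must agree on all of $U\ltimes\zp^d$. This principle is false: the trivial character and a nontrivial character of $\zp$ of order $p$ (valued in $\mu_p\subset K^\times$) agree on $p\zp$ and both have zero differential. Worse, the conclusion you want genuinely fails outside $\GGZ$ --- for a nontrivial finite-order character $\eta$ one has $\partial\eta=0$, so $\exp(\partial\log\gamma)=1\neq\eta(\gamma)$ --- so any correct argument must invoke the hypothesis $l(V)\in\oplus_n\mathbb{Z}\cdot n$ at precisely this point, which your sketch does not. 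A repair along your lines: write $\gamma=\exp(\mathfrak{X})$; for $n\gg0$ one has $\gamma^{p^n}\in\Gamma'$, hence $\rho(\gamma)^{p^n}=\exp(p^n\partial\mathfrak{X})$, so $A:=\exp(-\partial\mathfrak{X})\rho(\gamma)$ commutes with $\partial\mathfrak{X}$ and satisfies $A^{p^n}=1$. Since the Jordan--H\"older factors of $V$ are among the $\chi_{n_i}$, the eigenvalues of both $\rho(\gamma)$ and $\exp(\partial\mathfrak{X})$ lie in $\{\chi(\gamma)^{n_i}\}$, so each eigenvalue of $A$ is simultaneously a $p$-power root of unity and of the form $\chi(\gamma)^{n_i-n_j}$ with $\chi(\gamma)\in 1+2p^c\zp$ torsion-free; hence all eigenvalues of $A$ equal $1$, and a unipotent torsion element in characteristic $0$ is the identity. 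Alternatively, one can abandon the direct identification and fall back on an $\mathrm{Ext}^1$ d\'evissage as the paper does.
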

\begin{proof}
Let $\Gamma^0\trianglelefteq_{\mathrm{o}}\R$ be the image of the exponential map $\mathfrak{g}\to\R$ and define $\mathrm{Rep}_{K}^{\mathbb{Z}}{\Gamma^0}$ similar to Definition~\ref{def:z}. 
Since $\partial\circ\mathbb{V}=\mathrm{id}_{\LLZ}$, we have only to prove that $\mathbb{V}$ is an equivalence.

It is easy to see that the restriction functor $|_{\Gamma^0}:\GGZ\to\mathrm{Rep}_{K}^{\mathbb{Z}}{\Gamma^0}$ is faithful by Lemma~\ref{lem:tan}. Together with the fact that the composite functor
\[
\mathbb{V}|_{\Gamma^0}:\LLZ\to\GGZ\to\mathrm{Rep}_{K}^{\mathbb{Z}}{\Gamma^0}
\]
is an equivalence, $\mathbb{V}$ is fully faithful.

Since $\mathrm{Im}\mathbb{V}$ contains all the  irreducible representations of $\GGZ$, the assertion that $\mathbb{V}$ is essentially surjective reduces to, by d\'{e}vissage, proving that $\mathrm{Ext}^1_{\LLZ}(D_1,D_2)\to\mathrm{Ext}^1_{\GGZ}(\mathbb{V}(D_1),\mathbb{V}(D_2))$ is surjective for $D_1,D_2\in\LLZ$. To prove the latter assertion, we have only to prove the injectivity of $\mathrm{Ext}^1_{\GGZ}(V_1,V_2)\to\mathrm{Ext}^1_{\mathrm{Rep}_{K}^{\mathbb{Z}}{\Gamma^0}}(V_1|_{\Gamma^0},V_2|_{\Gamma^0})$ for $V_1,V_2\in\mathrm{Im}\mathbb{V}$. By Lemma~\ref{lem:tan}, we can assume $V_1=K$. By d\'{e}vissage and the inflation-restriction sequence, the assertion follows from $\chi_n^{\R^0}=0$ for $n\in\mathbb{Z}\setminus\{0\}$ and $\cc{1}{\R/\R^0}{K}\cong\mathrm{Hom}(\R/\R^0,K)=0$.
\end{proof}

\begin{dfn}[{\cite[$\S\S 2.3$]{Fon1}}]
Let $K[\log{t}]$ be a polynomial ring with a $\RC$-action given by
\[
\gamma(\log{t})=\log{t}+\log{\chi(\gamma)}.
\]
$(i.e.,\ t=\log{\ee}.)$ Note that $K[\log{t}]^{<r}:=\oplus_{0\le n<r}{K(\log{t})^n}\in\GCK$ and $\PP{K[\log{t}]^{<r}}\cong K[\xx{0}]/\xx{0}^r$ as a $\LcK$-module.
\end{dfn}

\begin{prop}\label{prop:clas}
If $V\in\GGZ$ satisfies $l(V)\in\mathbb{Z}\cdot 0$, then $V\cong K[\log{t}]^{<r_1}\oplus\dotsb\oplus K[\log{t}]^{<r_k}$ in $\GGZ$.
\end{prop}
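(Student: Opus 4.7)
The plan is to transport everything through the Tannakian equivalence $\partial:\GGZ \to \LLZ$ of Proposition~\ref{prop:equ}, which reduces the problem to a purely Lie-algebraic classification statement about $D := \partial V \in \LLZ$. The hypothesis $l(V) \in \mathbb{Z}\cdot 0$ means that every Jordan--H\"older factor of $V$ is the trivial character $\chi_0$, so every Jordan--H\"older factor of $D$ is $K = K[\xx{0}]/\xx{0}$. In particular $\xx{0}$ acts nilpotently on $D$, and all of $D$ is the generalized $0$-eigenspace of $\xx{0}$.

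The key step is to show that $\xx{j}$ acts by zero on $D$ for every $1 \le j \le d$. Using the commutation relation $[\xx{0},\xx{j}] = \xx{j}$ recalled in \S\ref{sec:pad}, we have $\xx{0}\xx{j} = \xx{j}(\xx{0}+1)$ in $U\Lk$, and by induction $\xx{0}^n\xx{j} = \xx{j}(\xx{0}+1)^n$. Consequently $\xx{j}$ sends the generalized $\lambda$-eigenspace of $\xx{0}$ into the generalized $(\lambda+1)$-eigenspace. Since $D$ is the generalized $0$-eigenspace and the generalized $1$-eigenspace is trivial, we conclude $\xx{j}(D) = 0$.

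Thus the $\Lk$-action on $D$ factors through $K[\xx{0}]$, and $D$ is a finite-dimensional $K[\xx{0}]$-module on which $\xx{0}$ acts nilpotently. By the structure theorem for modules over the PID $K[\xx{0}]$, we obtain $D \cong \bigoplus_{i=1}^k K[\xx{0}]/\xx{0}^{r_i}$ in $\LLZ$ for some $r_1,\dotsc,r_k \ge 1$. Since $\partial K[\log t]^{<r_i} \cong K[\xx{0}]/\xx{0}^{r_i}$ by the remark immediately preceding the proposition, the full faithfulness of $\partial$ (which reflects isomorphisms) lifts this decomposition to $V \cong \bigoplus_{i=1}^k K[\log t]^{<r_i}$ in $\GGZ$. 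I expect the eigenvalue-shift argument to be the only genuinely content-bearing step; everything else is either formal consequence of Proposition~\ref{prop:equ} or elementary linear algebra, so I do not anticipate a serious obstacle.
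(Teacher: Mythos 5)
Your proof is correct, and it reaches the conclusion by a somewhat more direct route than the paper. Both arguments begin identically, using Proposition~\ref{prop:equ} to reduce to the statement that $D\in\LLZ$ with $l(D)\in\mathbb{Z}\cdot 0$ is a direct sum of $K[\xx{0}]/\xx{0}^{r_i}$'s. From there the paper proceeds by d\'evissage on the length of $D$: it peels off one irreducible quotient at a time and shows that the canonical map $\mathrm{Ext}^{1}_{\LLCK}(K,D)\to\mathrm{Ext}^1_{\LK}(K,D)$ is an isomorphism, citing the Chevalley--Eilenberg computation of Lemma~\ref{lem:caL} (where the identity $d^q\delta(\xx{0}\wedge\cdots)=(\xx{0}-q)\delta(\cdots)$ encodes exactly the weight-shifting you exploit). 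You instead kill the action of $\xx{1},\dotsc,\xx{d}$ on all of $D$ in one stroke: since $(\xx{0}-(\lambda+1))^{N}\xx{j}=\xx{j}(\xx{0}-\lambda)^{N}$, the operator $\xx{j}$ shifts generalized $\xx{0}$-eigenspaces by $+1$, and $D$ is concentrated in generalized eigenvalue $0$, so $\xx{j}D=0$; the structure theorem for $K[\xx{0}]$-modules then finishes the Lie-algebra side, and full faithfulness of $\partial$ transports the decomposition back. The two mechanisms are the same relation $[\xx{0},\xx{j}]=\xx{j}$ in disguise, but your version avoids both the induction and the $\mathrm{Ext}^1$ comparison, at the price of invoking the eigenspace decomposition explicitly; the paper's version has the advantage of reusing a computation it needs elsewhere anyway. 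Either way the argument is complete.
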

\begin{proof}
Note that $K[\log{t}]^{<r}\in\GGZ$. Then, by Proposition~\ref{prop:equ}, we have only to prove the following: If $D\in\LLZ$ satisfies $l(D)\in\mathbb{Z}\cdot 0$, then $D\cong K[\xx{0}]/\xx{0}^{r_1}\oplus\dotsb\oplus K[\xx{0}]/\xx{0}^{r_k}$. Since in the case $\dim_{K}{D}=1$ we have $D\cong K$, by d\'{e}vissage, we have only to prove that, for $D=K[\xx{0}]/\xx{0}^{r_1}\oplus\dotsb\oplus K[\xx{0}]/\xx{0}^{r_k}$, the canonical map $\mathrm{Ext}^{1}_{\LLCK}(K,D)\to\mathrm{Ext}^1_{\LK}(K,D)$ is an isomorphism. This follows from the same calculation in the proof of Lemma~\ref{lem:caL}.
\end{proof}

Combining the following two lemmas, Proposition~\ref{prop:H1} follows.
\begin{lem}\label{lem:extone}
For $V\in\GZ$, the canonical map
\[
\kinf\otimes\partial:\kinf\otimes\mathrm{Ext}^1_{\G}(\kinf,V)\to\mathrm{Ext}^1_{\LL}(\kinf,\PP{V})
\]
is injective.
\end{lem}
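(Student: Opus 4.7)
The plan is to argue by strong induction on the length $\#l(V)$ of $V\in\GZ$.

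For the base case, $V$ is irreducible, so by Lemma~\ref{lem:fon}(iv) we have $V\cong\kinf(n)$ for some $n\in\mathbb{Z}$. The equivalences of Tannakian categories in Theorem~\ref{thm:sen} identify $\mathrm{Ext}^1_{\G}(\kinf,\kinf(n))$ with $\mathrm{Ext}^1_{\GC}(\cp,\cp(n))=\cc{1}{G_K}{\cp(n)}$, to which Theorem~\ref{thm:hyo} applies: the group vanishes for $n\notin\{0,1\}$, is $K$-spanned by $\log\chi$ for $n=0$, and is $K$-spanned by $\ee^{s_1},\dots,\ee^{s_d}$ for $n=1$. Lemma~\ref{lem:caL} provides the parallel descriptions of $\mathrm{Ext}^1_{\LL}(\kinf,\PP{V})\cong\coh{1}{\Ll}{\PP{V}}$, with bases $\delta_0$ and $\delta_1,\dots,\delta_d$. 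A direct differentiation shows that $\partial$ carries $\log\chi$ to a nonzero scalar multiple of $\delta_0$ and each $\ee^{s_j}$ to a nonzero scalar multiple of $\delta_j$, so the base-changed map is $\kinf$-linearly injective in each case.

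For the induction step, choose a short exact sequence $0\to V_1\to V\to V_2\to 0$ in $\GZ$ with $V_1$ irreducible, so that $V_2$ has strictly smaller length. Since $\kinf$ is flat over $K$, tensoring the long exact sequence of $\mathrm{Ext}^{\bullet}_{\G}(\kinf,-)$ with $\kinf$ preserves exactness, and $\partial$ maps this to the long exact sequence of $\mathrm{Ext}^{\bullet}_{\LL}(\kinf,\PP{-})$ (exact because $\partial$ is an exact functor of Tannakian categories by Lemma~\ref{lem:sen}(i)). Focusing on the four-term segment
\[
\mathrm{Ext}^0_{\G}(\kinf,V_2)\to\mathrm{Ext}^1_{\G}(\kinf,V_1)\to\mathrm{Ext}^1_{\G}(\kinf,V)\to\mathrm{Ext}^1_{\G}(\kinf,V_2)
\]
(tensored with $\kinf$) and its Lie-algebra counterpart, the leftmost vertical arrow is an isomorphism by Lemma~\ref{lem:sen}(iv), while the vertical maps over $V_1$ and $V_2$ on the $\mathrm{Ext}^1$ side are injective by the base case and the induction hypothesis respectively. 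The four-lemma for injectivity then forces the vertical map over $V$ to be injective.

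The main obstacle is the base case: while the dimensions and explicit bases are supplied by Theorem~\ref{thm:hyo} and Lemma~\ref{lem:caL}, one must verify that $\partial$ identifies Hyodo's cocycles with the corresponding Lie-algebraic derivations up to nonzero scalars, which is a careful (but routine) differentiation using $\chi(\exp(t\xx{0}))$ and $s_j(\exp(t\xx{k}))=t\delta_{jk}$. The descent of Hyodo's computation from $\cp$- to $\kinf$-coefficients is absorbed into the equivalence $\GC\cong\G$ of Theorem~\ref{thm:sen}.
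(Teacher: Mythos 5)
Your proposal is correct and follows essentially the same route as the paper: the paper's one-line ``by d\'evissage'' reduction to $V=\kinf(n)$ is exactly your four-lemma induction (using Lemma~\ref{lem:sen}(iv) for the degree-$0$ isomorphism and the exactness of $\partial$), and the base case is handled identically via Theorem~\ref{thm:sen}, Theorem~\ref{thm:hyo}(iii) and Lemma~\ref{lem:caL}. The differentiation you flag as the remaining obstacle is carried out explicitly in the paper by writing the matrices of $\xx{0},\dotsc,\xx{d}$ on the extensions attached to $\log\chi$ and $\ee^{s_j}$, confirming that $[\log\chi]\mapsto[\delta_0]$ and $[\ee^{s_j}]\mapsto[\delta_j]$.
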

\begin{proof}
By d\'{e}vissage, we can assume $V=\kinf(n)$ for $n\in\mathbb{Z}$. In this case, we will prove that the above map is an isomorphism. By Theorem~\ref{thm:sen}, the canonical map
\[
\cc{1}{\R}{\kinf(n)}\to\cc{1}{\gk}{\cp(n)}.
\]
is an isomorphism. Then, by Theorem~\ref{thm:hyo}, we have $\cc{1}{\R}{\kinf(n)}=0$ for $n\neq 0,1$.

$\bullet\ n=0$ : By Theorem~\ref{thm:hyo}(iii), $\cc{1}{\R}{\kinf}$ is a $1$-dimensional $K$-vector space generated by $\log{\chi}$. The corresponding extension is
\[
\xymatrix{
\log{\chi}:0\ar[r]&\kinf\ar[r]^(.4){\iota_1}&\kinf\oplus\kinf\ar[r]^(.6){\pi_2}&\kinf\ar[r]&0,\\
}
\]
where the $\R$-action on $\kinf\oplus\kinf$ is given by
\[
\gamma(1,0)=(1,0),\ \gamma(0,1)=(\log{\chi(\gamma)},1).
\]
Hence the matrix of the actions of $\xx{0},\xx{1},\dotsc,\xx{d}$ on the basis $\{(1,0),(0,1)\}$ of $\PP{\kinf\oplus\kinf}$ are given by
\[
\begin{pmatrix}
0&1\\
0&0
\end{pmatrix}_,
\begin{pmatrix}
0&0\\
0&0
\end{pmatrix}_{,\dotsc,}
\begin{pmatrix}
0&0\\
0&0
\end{pmatrix}_.
\]

Then, the canonical map $\kinf\otimes_{K}{\cc{1}{\R}{\kinf}}\to\coh{1}{\Ll}{\kinf}$ is an isomorphism by Lemma~\ref{lem:caL} since this map sends $[\log{\chi}]$ to $[\delta_0]$.

$\bullet\ n=1$ : By Theorem~\ref{thm:hyo}(iii), $\cc{1}{\R}{\kinf(1)}$ is a $d$-dimensional $K$-vector space generated by $\ee^{s_j}$ for $1\le j\le d$. The corresponding extension of $\ee^{s_j}$ is
\[\xymatrix{
\ee^{s_j}:0\ar[r]&\kinf(1)\ar[r]^(.4){\iota}&\kinf(1)\oplus\kinf\ar[r]^(.65){\pi}&\kinf\ar[r]&0,
}\]
where the $\R$-action on $\kinf(1)\oplus\kinf$ is given by
\[
\gamma(1,0)=(\log{\chi(\gamma)},0),\ \gamma(0,1)=(s_j(\gamma),1).
\]
Hence the matrix of the actions of $\xx{0},\xx{1},\dotsc,\xx{j},\dotsc,\xx{d}$ on the basis $\{(1,0),(0,1)\}$ of $\PP{\kinf(1)\oplus\kinf}$ are given by
\[
\begin{pmatrix}
1&0\\
0&0
\end{pmatrix}_,
\begin{pmatrix}
0&0\\
0&0
\end{pmatrix}_{,\dotsc,}
\begin{pmatrix}
0&1\\
0&0
\end{pmatrix}_{,\dotsc,}
\begin{pmatrix}
0&0\\
0&0
\end{pmatrix}_.
\]
Hence, the canonical map $\kinf\otimes_{K}\cc{1}{\R}{\kinf(1)}\to\coh{1}{\Ll}{\kinf[\xx{0}]/(\xx{0}-1)}$ is an isomorphism by Lemma~\ref{lem:caL} since this map sends $[\ee^{s_j}]$ to $[\delta_j]$ for $1\le j\le d$.
\end{proof}

\begin{lem}
For $V\in\GGZ$, there exists the following commutative diagram
\[\xymatrix{
\kinf\otimes\mathrm{Ext}^1_{\GGZ}(K,V)\ar[r]^(.47){\kinf\otimes\partial}_(.47){\cong}\ar[d]^{\mathrm{can.}}&\kinf\otimes\mathrm{Ext}^1_{\LLZ}(K,\PP{V})\ar[d]^{\mathrm{can.}}_{\cong}\\
\kinf\otimes\mathrm{Ext}^1_{\G}(\kinf,V_{\kinf})\ar[r]^(.51){\kinf\otimes\partial}&\mathrm{Ext}^1_{\LL}(\kinf,\PP{V}_{\kinf}).
}\]
\end{lem}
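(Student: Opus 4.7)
The plan is to verify three distinct claims encoded in the diagram: (a) the top arrow is an isomorphism, (b) the right vertical arrow is an isomorphism, and (c) the square commutes. The hard work for this lemma has already been done earlier in the section, so what remains is essentially a packaging step that combines the equivalence of Tannakian categories $\partial:\GGZ\to\LLZ$ with the base-change property of Lie algebra cohomology.

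For (a), I would invoke Proposition~\ref{prop:equ}: since $\partial:\GGZ\to\LLZ$ is an equivalence of abelian categories, it induces an isomorphism on Yoneda $\mathrm{Ext}^1(K,V)\xrightarrow{\cong}\mathrm{Ext}^1(K,\PP{V})$, and tensoring over $K$ with $\kinf$ preserves this.

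For (b), I would reduce to ordinary Lie algebra cohomology. By Corollary~\ref{cor:dec}, $\LLZ$ is a direct factor of $\LK$, so extensions in $\LLZ$ coincide with those in $\LK$, giving $\mathrm{Ext}^1_{\LLZ}(K,\PP V)\cong\coh{1}{\Lk}{\PP V}$; the analogous identification $\mathrm{Ext}^1_{\LL}(\kinf,\PP V_{\kinf})\cong\coh{1}{\Ll}{\PP V_{\kinf}}$ holds on the bottom-right. Writing $\Ll\cong\kinf\otimes_K\Lk$ and applying the Chevalley--Eilenberg complex of Definition~\ref{dfn:CE}, the flatness of $\kinf/K$ gives
\[
\kinf\otimes_K\coh{1}{\Lk}{\PP V}\cong \coh{1}{\Ll}{\kinf\otimes_K\PP V}=\coh{1}{\Ll}{\PP V_{\kinf}},
\]
which is exactly the right vertical arrow.

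For (c), I would simply trace an extension class $[\mathcal E]\in\mathrm{Ext}^1_{\GGZ}(K,V)$ around both sides of the square. The right-then-down route sends $[\mathcal E]$ to the class of the $\Ll$-extension obtained by first applying $\partial$ and then extending scalars to $\kinf$; the down-then-right route base-changes $\mathcal E$ to $\kinf$ in $\G$ first and then applies $\partial$. The two agree because $\partial$ is computed via the same one-parameter limit formula on the underlying vector space and this formula is insensitive to whether the coefficient field is $K$ or $\kinf$. No genuine obstacle is expected: the substantive ingredients (Proposition~\ref{prop:equ} and Corollary~\ref{cor:dec}) are already established, and the remaining content is the standard flat base change for the Chevalley--Eilenberg complex plus functoriality of $\partial$.
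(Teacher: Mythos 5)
Your proposal is correct, and it is exactly the argument the paper intends: the lemma is stated there without proof, and the ingredients you cite (Proposition~\ref{prop:equ} for the top arrow, Corollary~\ref{cor:dec} plus base change of the Chevalley--Eilenberg complex for the right arrow, and functoriality of $\partial$ under $\kinf\otimes_K-$ for commutativity) are precisely what the surrounding text has set up. The only point worth making explicit in step (c) is that $\PP{V_{\kinf}}=\kinf\otimes_K\PP{V}$ because $\LLl$ acts on $\kinf$ by $K$-derivations of an algebraic extension, hence trivially, so the limit formula on $V_{\kinf}$ is the $\kinf$-linear extension of that on $V$.
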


\begin{construction}
Let $V\in\GZ$. We will prove that there exists a maximum element $V_0$ in the set
\[
\{W\subset V|W \text{ is a finite dimensional }\R\text{-stable }K\text{-vector space such that }l(W)\in\oplus_{n\in\mathbb{Z}}{\mathbb{Z}\cdot n\subset Z(\GGG)}\}
\]
and the canonical map $\kinf\otimes V_0\to V$ is an isomorphism. We proceed by induction on $r=\dim_{\kinf}{V}$. When $r=1$, we can assume $V=\kinf$ by tensoring with $\V{n}$ for some $n\in\mathbb{Z}$. We have only to prove $V_0=K$. Let $W\subset\kinf$ be a non-zero $K$-vector space satisfying the condition above. 
Then, the action of $\R$ on $W$ factors through a finite quotient. Since the action of $\R$ on $\chi_n$ for $n\in\mathbb{Z}$ factors through a finite quotient if and only if $n=0$, we have $l(W)\in\mathbb{Z}\cdot 0$. By Proposition~\ref{prop:clas}, $W$ is a direct sum of representations of the form $K[\log{t}]^{<r}$. Since the action of $\R$ on $K[\log{t}]^{<r}$ factors through a finite quotient if and only if $r=1$, $\R$ acts on $W$ trivially, i.e., $W=K$. For general $r$, after tensoring with some $\V{n},\ n\in\mathbb{Z}$, we have an exact sequence in $\G$
\[\xymatrix{
\mathcal{E}:0\ar[r]&V'\ar[r]^{\alpha}&V\ar[r]^{\beta}&\kinf\ar[r]&0.
}\]
By Proposition~\ref{prop:H1}, there exists a unique extension
\[\xymatrix{
\mathcal{E}_0:0\ar[r]&V'_0\ar[r]&V_0\ar[r]&K\ar[r]&0
}\]
of $K$ by $V'_0$ in $\GGG$ such that $[\mathcal{E}]=\kinf\otimes[\mathcal{E}_0]$. If $V_0\subset W$ satisfies the condition above, then we have $\alpha^{-1}(W)\subset V'_0,\beta(W)\subset K$, i.e., $W=V_0$.
\end{construction}

Thus, we have
\begin{prop}
The functor $\GZ\to\GGZ;V\mapsto V_0$ induces an equivalence of Tannakian categories with a quasi-inverse $V\mapsto\kinf\otimes V$.
\end{prop}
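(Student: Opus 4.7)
The plan is to observe that essentially all the technical content needed for this proposition has already been assembled in the Construction immediately above, which attaches to each $V \in \GZ$ a canonical finite-dimensional $\R$-stable $K$-subspace $V_0 \subset V$, maximal among those whose Jordan--H\"older factors (viewed in $\GGG$) lie in $\oplus_{n \in \mathbb{Z}} \mathbb{Z}\cdot n$, and which furthermore satisfies $\kinf \otimes_K V_0 \xrightarrow{\sim} V$. The task is thus to promote $V \mapsto V_0$ into a functor of Tannakian categories and to verify it is a quasi-inverse to the base change $W \mapsto \kinf \otimes_K W$.

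First I would check functoriality. Given $f \colon V \to V'$ in $\GZ$, the image $f(V_0) \subset V'$ is a finite-dimensional $\R$-stable $K$-subspace whose Jordan--H\"older factors in $\GGG$ are a subcollection of those of $V_0$, hence again lie in $\oplus_n \mathbb{Z}\cdot n$. The maximality property defining $V'_0$ then forces $f(V_0) \subset V'_0$, so restriction yields a morphism $V_0 \to V'_0$ in $\GGZ$, and uniqueness of this restriction gives a bona fide functor.

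Next, essential surjectivity. For $W \in \GGZ$, set $V := \kinf \otimes_K W \in \GZ$; then $W \subset V$ is itself a candidate for the maximality property, so $W \subset V_0$, while the isomorphism $\kinf \otimes_K V_0 \cong V = \kinf \otimes_K W$ forces $\dim_K V_0 = \dim_K W$, whence $W = V_0$ and we have a natural isomorphism $(\kinf \otimes W)_0 \cong W$. Full faithfulness then follows formally: by $\kinf$-linear extension and the isomorphism $\kinf \otimes V_0 \cong V$, a morphism $V_0 \to V'_0$ in $\GGZ$ corresponds bijectively to a morphism $V \to V'$ in $\GZ$, inverse to restriction.

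Finally, compatibility with $\otimes$ and ${}^\vee$ is again an instance of the same maximality principle: $V_0 \otimes_K V'_0$ sits inside $V \otimes_{\kinf} V'$ as a finite-dimensional $\R$-stable $K$-subspace whose Jordan--H\"older factors are tensor products of integral Tate twists, hence again integral Tate twists; maximality gives $V_0 \otimes_K V'_0 \subset (V \otimes V')_0$, and equality follows by dimension since both become $V \otimes_{\kinf} V'$ after extension of scalars. The dual and unit cases are analogous. I do not expect a serious obstacle here: the genuine difficulty lay in the preceding Construction, whose inductive argument invokes Proposition~\ref{prop:H1} and the classification of Proposition~\ref{prop:clas}; once those are in hand, the present proposition is essentially a packaging statement.
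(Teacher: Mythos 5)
Your proposal is correct and matches the paper's intent exactly: the paper gives no separate proof of this proposition beyond the preceding Construction (it is introduced with ``Thus, we have''), and the routine verifications you supply — functoriality via the maximality of $V_0$, essential surjectivity via $W\subset(\kinf\otimes W)_0$ plus a dimension count, and $\otimes$-compatibility by the same maximality-plus-dimension argument — are precisely what the paper leaves implicit. No gaps.
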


\section{Calculation of Galois cohomology}\label{sec:Cal}
Our aim of this section is to prove the following theorem:
\begin{thm}\label{thm:van}
For $V\in\GCZp$, $\CC{\gk}{V}=0$.
\end{thm}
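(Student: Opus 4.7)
The plan is first to reduce by standard dévissage to the case of an irreducible $V$, then to transport the continuous Galois cohomology of $\gk$ to continuous cohomology of $\R$ on $V^H$, and finally to exhibit an element $\gamma_0\in\R$ whose action minus the identity is invertible on $V^H$, killing all cohomology via the two-term complex for topologically cyclic groups.

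By the long exact sequence of continuous Galois cohomology applied to a Jordan-H\"older filtration of $V$, it suffices to treat $V$ irreducible; by Lemma~\ref{lem:fon}(iv), then $V\cong\cp\otimes \V{\alpha}$ for some $\alpha\in\mathbb{Z}':=(\kbar/\gk)\setminus\mathbb{Z}$. By Theorem~\ref{thm:sen}(i), $V\cong\cp\otimes_{\widehat{\kinf}}V^H$; combining this with a continuous Hilbert~90 giving $\cc{q}{H}{V}=0$ for $q\ge 1$ (implicit in the proof of Brinon's equivalence) and the Hochschild-Serre spectral sequence for $H\trianglelefteq\gk$ reduces the claim to the vanishing of $\cc{*}{\R}{V^H}$. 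Applying Hochschild-Serre once more to the split extension $1\to\RG\to\R\to\RA\to1$, and recalling that $\RA$ is, up to a finite torsion factor $T$ of order prime to $p$, topologically generated by $\gamma_0=\exp(2p^c\xx{0})$, the problem reduces to showing that $\gamma_0-1$ acts invertibly on each $\cc{q}{\RG}{V^H}$, since continuous $\RA$-cohomology on a $\cp$-vector space is then computed by the two-term complex $(\gamma_0-1)$ and $T$-cohomology vanishes in positive degrees.

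For this invertibility, Remark~\ref{rem:fon}(iii) says that $\xx{0}$ acts on $\DS V$ with minimal polynomial dividing a power of $P_\alpha(X)$; since $\alpha\in\mathbb{Z}'$, this polynomial has no integer roots, so $\xx{0}$ is invertible on $\DS V$. On elements of $(V^H)^f$ where the exponential series converges, which is guaranteed by Lemma~\ref{lem:sen}(ii) after possibly passing to a sufficiently high $p$-power $\gamma_0^{p^n}$, one has $\gamma_0^{p^n}-1=2p^{c+n}\xx{0}\cdot u$ with $u$ a unit in the $K$-algebra generated by $\xx{0}$, whence $\gamma_0^{p^n}-1$ is invertible on $\DS V$; the invertibility then extends to $V^H$ by continuity of the semilinear action and finite-dimensionality of $V^H$ over $\widehat{\kinf}$. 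The main obstacle I anticipate is this final continuity/completion step: rigorously transferring invertibility from the locally finite part $(V^H)^f$, where the Lie-theoretic formula is clean, to the full $\widehat{\kinf}$-module $V^H$ with its semilinear $\R$-action. I expect this to follow from the exponential-convergence analysis already carried out in the construction preceding Proposition~\ref{prop:equ}.
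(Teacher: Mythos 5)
Your overall strategy belongs to the same family as the paper's (reduce to $\R$-cohomology of $\widehat{\kinf}\otimes\V{\alpha}$ and kill it by showing that a twisted $\gamma-1$ is bijective, \`a la Tate), but the step you yourself flag as the main obstacle is a genuine gap, and the fix you propose does not work. The operator $\gamma_0^{p^n}-1$ is only \emph{semilinear} over $\widehat{\kinf}$, so finite-dimensionality of $V^H$ over $\widehat{\kinf}$ does not reduce its invertibility to finite-dimensional linear algebra: as a $K$-linear operator it acts on an infinite-dimensional $p$-adic Banach space. Worse, the identity $\gamma(v)=\exp(\log\gamma)(v)$ of Lemma~\ref{lem:sen}(ii) holds only for $\gamma$ in an open subgroup \emph{depending on} $v$, so there is no single $n$ for which $\gamma_0^{p^n}-1=2p^{c+n}\xx{0}\cdot u$ holds on all of $(V^H)^{\mathrm{f}}$. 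And even granting bijectivity of $\gamma_0^{p^n}-1$ on the dense subspace $(V^H)^{\mathrm{f}}$, a bijective operator on a dense subspace of a Banach space extends to a bijection of the completion only if its inverse is \emph{bounded}; that boundedness is exactly Tate's normalized-trace estimate, which in the imperfect residue field case is the hard input (Lemma~\ref{lem:Tr}, quoting Brinon) and is emphatically not a consequence of the exponential-convergence computation preceding Proposition~\ref{prop:equ} --- that computation takes place on a finite-dimensional $K$-vector space, where every linear operator is automatically bounded. A further unaddressed point: your two-term-complex argument for $\RA$ needs $\cc{q}{\RG}{V^H}$ to be a reasonable topological module (Hausdorff, complete), which is not known a priori.

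For contrast, the paper makes all of this quantitative by working integrally. It first replaces $K$ by a finite extension (Epp's theorem) so that Assumption~\ref{ass} holds and Brinon's normalized traces are available; it then picks the lattice $\rho_{\alpha}$ of Construction~\ref{con:irr} and uses the $\varprojlim$--$\varprojlim^1$ exact sequence to reduce to showing that $\cc{q}{\R}{\kinf/\okinf\otimes_{\ok}\rho_{\alpha}}$ is killed by a fixed power of $p$; the $\RG$-cohomology is controlled up to bounded $p$-torsion by the boundedness of the trace maps $\theta^{(j)}$ (Lemma~\ref{lem:Tr}, Corollary~\ref{cor:hyo}); and the final $\RC$-step is Tate's Propositions~7--8 applied to the matrix $A$ of $\gamma_{\alpha}^{-1}$ on $\rho_{\alpha}$, with $\alpha\notin\mathbb{Z}$ entering through the invertibility of $1-A^{p^n}$ (Lemma~\ref{lem:tr}(ii)). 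If you want to keep your formulation, you must produce a bounded inverse of the semilinear operator on the completed module, and the normalized-trace decomposition is the mechanism that does this; without it the argument does not close.
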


For a finite extension $K'/K$, regard $\overline{K'}=\overline{K}$ and $G_{K'}\subset\gk$. Then, for $V\in\GCZp$, we have $V|_{G_{K'}}\in\mathrm{Rep}_{\cp}^{\mathbb{Z}'}{G_{K'}}$: To prove this, we can assume that $K'/K$ is Galois and $V$ is irreducible, i.e., $V=\cp\otimes\V{\alpha}$ for $\alpha\in(\kbar/\gk)\setminus\mathbb{Z}$. And we have only to prove that $\mathrm{Hom}_{\mathrm{Rep}_{\cp}{G_{K'}}}(\cp(n),\cp\otimes\V{\alpha})=0$ for all $n\in\mathbb{Z}$. By Lemma~\ref{lem:tan} and Remark~\ref{rem:fon}(ii), we can assume $n=0$. In this case, the assertion follows from
\[
\dim_{K'}{\cc{0}{G_{K'}}{\cp\otimes\V{\alpha}}}=\dim_{K}{\cc{0}{\gk}{\cp\otimes\V{\alpha}}}=0
\]
where the first equality follows from Hilbert 90 and the latter one follows from Corollary~\ref{cor:calirred}. Hence, to prove the theorem, we can replace $K$ by its finite extension satisfying Assumption~\ref{ass}. (Such an extension exists by a theorem of \cite{Epp}.) In particular, we have $\karith=\kcycl$ and we can apply the argument of \cite[\S 2]{Bri} (cf. Appendix). Moreover, by the canonical isomorphism $\CC{\gk}{V}\cong\CC{\R}{V^H}$ (see \cite[Lemma~(3-5)]{Hyo}) and d\'{e}vissage, we have only to prove
\begin{prop}\label{prop:vani}
For $\alpha\in(\kbar/\gk)\setminus\mathbb{Z}$, we have $\CC{\R}{\widehat{\kinf}\otimes\V{\alpha}}=0$.
\end{prop}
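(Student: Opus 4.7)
The plan is to reduce, via Shapiro's lemma and Hochschild--Serre, to a Tate--Sen type cohomological vanishing for a non-integer character twist; the final input is that $\alpha\notin\mathbb{Z}$ ensures that no integer Tate shift produces an integer exponent.

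First, Lemma~\ref{lem:fon}(i) gives that $\kcycl\otimes_{\ok}\rho_{\alpha}$ is a direct sum of copies of $\V{\alpha}$, so it suffices to prove $\CC{\R}{\widehat{\kinf}\otimes_{\ok}\rho_{\alpha}}=0$. By Construction~\ref{con:irr}, $\rho_{\alpha}=\mathcal{O}_{K_{\alpha}}[\RC]\otimes_{\mathcal{O}_{K_{\alpha}}[\RC_{\alpha}]}\mathcal{O}_{K_{\alpha}}$ is induced from the open subgroup $\RC_{\alpha}\subseteq\RC=\RA$ (using the reduction $\karith=\kcycl$), where $\mathcal{O}_{K_{\alpha}}$ carries the character $\chi_{\alpha}$. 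Viewing $\chi_{\alpha}$ as a character of $\R_{\alpha}:=\RC_{\alpha}\ltimes\RG\subseteq\R$ (trivial on $\RG$), Shapiro's lemma identifies the above cohomology with
\[
\CC{\R_{\alpha}}{\widehat{\kinf}\otimes_{K}K_{\alpha}(\chi_{\alpha})},
\]
where on the right, $\R_{\alpha}$ acts on $\widehat{\kinf}$ via restriction and on $K_{\alpha}$ via $\chi_{\alpha}$ (so $\RG$ acts trivially on $K_{\alpha}$).

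Next I would apply Hochschild--Serre to $1\to\RG\to\R_{\alpha}\to\RC_{\alpha}\to 1$. Since $\RG$ acts only on $\widehat{\kinf}$,
\[
\cc{q}{\RG}{\widehat{\kinf}\otimes_{K}K_{\alpha}(\chi_{\alpha})}=\cc{q}{\RG}{\widehat{\kinf}}\otimes_{K}K_{\alpha}(\chi_{\alpha}).
\]
By the imperfect Tate--Faltings--Hyodo calculation (compatible with Theorem~\ref{thm:hyo} via inflation-restriction along $G_{\karith}\twoheadrightarrow\RG$, and standard for Kummer-type towers; this is part of the machinery of \cite{Bri}), each $\cc{q}{\RG}{\widehat{\kinf}}$ is a finite direct sum of Tate twists $\widehat{\karith}(m)$ with $m\in\mathbb{Z}$ of $\widehat{\karith}$ as a $\RC_{\alpha}$-module. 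Consequently, it suffices to show $\cc{p}{\RC_{\alpha}}{\widehat{\karith}(n)\otimes_{K}K_{\alpha}(\chi_{\alpha})}=0$ for every $p\geq 0$ and every $n\in\mathbb{Z}$.

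Finally, since $\RC_{\alpha}$ is a $1$-dimensional $p$-adic Lie group, the cohomology is computed via the two-term complex $\gamma_{0}-1$ for a suitable topological generator $\gamma_{0}$ of an open pro-$p$-subgroup (up to a harmless finite quotient). Writing $\chi_{\alpha}(\gamma)=\exp(\overline{X}\log\chi(\gamma))$ as in Construction~\ref{con:irr}, the operator $\gamma_{0}-1$ on $\widehat{\karith}\otimes_{K}K_{\alpha}$ becomes $\chi(\gamma_{0})^{n+\overline{X}}\gamma_{0}-1$, where $\overline{X}\in K_{\alpha}$ acts on $K_{\alpha}$ by multiplication with eigenvalues equal to the roots of $P_{\alpha}(X)$. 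Since $\alpha\notin\mathbb{Z}$, none of these roots lies in $\mathbb{Z}$; hence for every $n\in\mathbb{Z}$ the exponents $n+\alpha_i$ are all non-integer, so a Tate--Sen style invertibility argument (as in \cite[\S 2]{Bri}) produces an explicit inverse of $\chi(\gamma_{0})^{n+\overline{X}}\gamma_{0}-1$ on $\widehat{\karith}\otimes_{K}K_{\alpha}$. The main obstacle is establishing this invertibility cleanly in the presence of both the $K_{\alpha}$-scalar action of $\overline{X}$ and the Tate twist $n$; this is exactly where the defining radius-of-convergence condition $v_p(\overline{X}\log\chi(\gamma))>1/(p-1)$ on $\RC_{\alpha}$ from Construction~\ref{con:irr} enters, ensuring that the twist is close enough to trivial to allow the geometric-series inverse to converge.
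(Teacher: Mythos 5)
Your overall strategy coincides with the paper's: reduce to $\rho_{\alpha}$ via Lemma~\ref{lem:fon}(i), kill the geometric direction $\RG$ by the Tate--Hyodo computation through Hochschild--Serre, and finish with a Tate-type invertibility of $\gamma_0-1$ on the remaining $\RC_{\alpha}$-module, using that $\alpha\notin\mathbb{Z}$ forces all exponents $n+\alpha_i$ to be nonzero (this is exactly the content of Lemma~\ref{lem:tr}(ii) and the appeal to Tate's Proposition~8). Your use of Shapiro's lemma to unwind the induced module is a clean cosmetic variant of what the paper does by writing the matrix $A$ of $\gamma_{\alpha}^{-1}$ on $\rho_{\alpha}$ in the explicit basis and checking the hypotheses of Lemma~\ref{lem:tr}(ii) directly; both are fine for an open subgroup of finite index.

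The one genuine gap is in how you run the middle step. You apply the Hochschild--Serre spectral sequence directly to the $p$-adic Banach module $\widehat{\kinf}\otimes_K K_{\alpha}(\chi_{\alpha})$ and assert that $\cc{q}{\RG}{\widehat{\kinf}}$ is a finite sum of Tate twists of $\widehat{\karith}$. Neither of these is available off the shelf: continuous cochain Hochschild--Serre for non-discrete topological coefficients requires justification, and the paper's Corollary~\ref{cor:hyo} computes $\cc{q}{\RG}{\kcycl/\okcycl\otimes\rho}$ only for the \emph{discrete} module $\kcycl/\okcycl$, and only up to bounded $p$-torsion. The paper avoids both issues by first invoking the $\varprojlim$--$\varprojlim^1$ exact sequence (the lemma quoted from \cite[Lemma~(3-6)]{Hyo}) to reduce the vanishing of $\CC{\R}{\widehat{\kinf}\otimes\rho_{\alpha}}$ to showing that $\cc{q}{\R}{\kinf/\mathcal{O}_{\kinf}\otimes\rho_{\alpha}}$ is killed by a power of $p$; at that torsion level Hochschild--Serre is classical and ``killed by $p^N$'' statements such as Lemma~\ref{lem:Tr}(ii) make sense. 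To make your argument rigorous you would either need to insert this same d\'evissage to torsion coefficients before invoking Hochschild--Serre, or supply a separate argument (e.g.\ Tate--Sen descent) that the spectral sequence and the splitting $\widehat{\kinf}=\widehat{\karith}\oplus X$ with $\CC{\RG}{X}=0$ hold at the Banach level. With that repair the proof goes through and is essentially the paper's.
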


For $1\le j\le d$, let $\R^{(j)}:=\overline{\langle\rr{j}\rangle}\trianglelefteq\RG$ and $\kinf^{(j)}:=K(\mu_{p^{\infty}},t_1^{p^{-\infty}},\dotsc,t_j^{p^{-\infty}})$. For $1\le j\le d$, let $\theta^{(j)}:=\varinjlim_{n}{{[\kinf^{(j-1)}(t_j^{p^{-n}}):\kinf^{(j-1)}]}^{-1}\mathrm{Tr}_{\kinf^{(j-1)}(t_j^{p^{-n}})/\kinf^{(j-1)}}}:\kinf^{(j)}\to\kinf^{(j-1)}$ be the normalized trace map of Tate.
\begin{lem}\label{lem:Tr}
\begin{enuroman}
\item $\theta^{(j)}$ is a bounded $\kinf^{(j-1)}$-linear map. Passing to the completion, denote again $\theta^{(j)}:\widehat{\kinf^{(j)}}\to\widehat{\kinf^{(j-1)}}$. Let $X_j:=\ker{\theta^{(j)}},X^{\mathrm{int}}_j:=X_j\cap\ocp$.

\item $\CC{\R^{(j)}}{X_j/X^{\mathrm{int}}_j}$ and $\CC{\RG}{X_j/X^{\mathrm{int}}_j}$ are killed by some power of $p$.
\end{enuroman}
\end{lem}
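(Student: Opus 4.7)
The plan is to adapt Tate's classical normalized-trace construction to the $j$-th step of the tower $\kinf^{(j-1)} \subset \kinf^{(j)}$, which is the totally ramified $\zp$-extension obtained by adjoining $p$-power roots of $t_j$. This is exactly the setting treated in \cite[\S 2]{Bri}, so I will appeal to the uniform estimates proved there.

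For (i), the first step is to note that each finite layer $\kinf^{(j-1)}(t_j^{p^{-n}})/\kinf^{(j-1)}$ is Galois of degree $p^n$, with Galois group a quotient of $\R^{(j)}$. The standard analysis of the different for such Kummer-type towers shows that $p^{-n}\mathrm{Tr}_{\kinf^{(j-1)}(t_j^{p^{-n}})/\kinf^{(j-1)}}$ is a bounded $\kinf^{(j-1)}$-linear projection onto $\kinf^{(j-1)}$, with operator norm uniform in $n$. Passing to the direct limit in $n$ and then to the $p$-adic completion will produce $\theta^{(j)} : \widehat{\kinf^{(j)}} \to \widehat{\kinf^{(j-1)}}$ as a continuous bounded $\widehat{\kinf^{(j-1)}}$-linear projector, which settles (i) and justifies the definitions of $X_j$ and $X_j^{\mathrm{int}}$.

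For (ii), the central consequence of Tate's method is that on $X_j$ the operator $\gamma_j - 1$ is bijective with bounded inverse, say $\|(\gamma_j-1)^{-1}\| \leq p^c$ for some integer $c = c(j)$. Granting this, the two-term complex $X_j \xrightarrow{\gamma_j - 1} X_j$ computing continuous cohomology of $\R^{(j)} = \overline{\langle\gamma_j\rangle} \cong \zp$ on $X_j$ is acyclic, so $\CC{\R^{(j)}}{X_j} = 0$. The long exact sequence associated to
\[
0 \to X_j^{\mathrm{int}} \to X_j \to X_j/X_j^{\mathrm{int}} \to 0
\]
will then identify $\CC{\R^{(j)}}{X_j/X_j^{\mathrm{int}}}$ up to a degree shift with $\CC{\R^{(j)}}{X_j^{\mathrm{int}}}$, and a direct check shows the latter is killed by $p^c$: given $y \in X_j^{\mathrm{int}}$, the element $z := (\gamma_j-1)^{-1}y$ satisfies $p^c z \in X_j^{\mathrm{int}}$, hence $p^c y = (\gamma_j-1)(p^c z) \in (\gamma_j-1)X_j^{\mathrm{int}}$, which bounds $\mathrm{H}^1$; the same inequality controls $\mathrm{H}^0$.

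Finally, since $\RG \cong \zp^d$ is abelian, $\R^{(j)} \trianglelefteq \RG$, and the Hochschild--Serre spectral sequence
\[
\cc{p}{\RG/\R^{(j)}}{\cc{q}{\R^{(j)}}{X_j/X_j^{\mathrm{int}}}} \Rightarrow \cc{p+q}{\RG}{X_j/X_j^{\mathrm{int}}}
\]
will transport the $p^c$-torsion bound from the $\R^{(j)}$-cohomology to the $\RG$-cohomology term by term on $E_2$, giving the desired bound on the abutment. The main obstacle is securing the uniform operator-norm estimates underlying both (i) and the boundedness of $(\gamma_j-1)^{-1}$; these rely on a careful computation of the different of the Kummer tower, which is precisely where Assumption~\ref{ass} and the reduction made just before Proposition~\ref{prop:vani} enter to let us quote \cite[\S 2]{Bri} verbatim.
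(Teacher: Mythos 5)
Your proposal is correct and follows essentially the same route as the paper: part (i) rests on the boundedness estimates for the normalized trace from \cite[\S 2]{Bri}, and part (ii) is exactly the argument of \cite[Propositions~7,~8]{Tate} (reduction to degrees $0,1$ for $\R^{(j)}\cong\zp$, bounded invertibility of $\gamma_j-1$ on $X_j$) followed by Hochschild--Serre for $\RG$.
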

\begin{proof}
(i) In \cite[Before Lemme~4]{Bri}, it is proved that the normalized trace map $\kinf\to \linebreak K(\mu_{p^{\infty}},t_1^{p^{-\infty}},\dotsc,\widehat{t_j^{p^{-\infty}}},\dotsc,t_d^{p^{-\infty}})$ (defined similarly as above) is a bounded $K(\mu_{p^{\infty}},t_1^{p^{-\infty}},\dotsc,\widehat{t_j^{p^{-\infty}}},\dotsc,t_d^{p^{-\infty}})$-linear map. By restricting this map to $K^{(j)}_{\infty}$, we obtain the assertion.

(ii) To prove the first assertion, we can assume that the degree is $0$ or $1$. Then, the assertion follows from the same argument of the proofs of \cite[Proposition~7,8]{Tate}.
The latter assertion follows from the first one by using the Hochschild-Serre spectral sequence.
\end{proof}

\begin{cor}[cf. {\cite[Proposition~(2-1)]{Hyo}}]\label{cor:hyo}
\begin{enuroman}
\item The kernel and cokernel of the canonical map
\[
\cc{q}{\RG}{\kcycl/\okcycl}\to\cc{q}{\RG}{\kinf/\okinf}
\]
are killed by some power of $p$.

\item Let $\rho$ be a finite free $\ok$-module with a continuous linear action of $\RC$. Then, we have a canonical $\RC$-equivariant isomorphism
\[
\cc{q}{\RG}{\kcycl/\okcycl\otimes\rho}\cong\wedge^q(\mathbb{Z}^d)\otimes\kcycl/\okcycl\otimes\rho(-q).
\]
\end{enuroman}
\end{cor}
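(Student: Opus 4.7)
For (i), I would induct along the tower $\kcycl=\kinf^{(0)}\subset\kinf^{(1)}\subset\cdots\subset\kinf^{(d)}=\kinf$ using the bounded normalized trace maps $\theta^{(j)}$ of Lemma~\ref{lem:Tr}(i). The key observation is that each $\theta^{(j)}$ is $\RG$-equivariant: it is $\widehat{\kinf^{(j-1)}}$-linear by construction, the generators $\rr{k}$ with $k\neq j$ fix $t_j$ and $\widehat{\kinf^{(j-1)}}$, and $\rr{j}$ merely permutes the elements averaged by the trace. Hence $\theta^{(j)}$ yields an $\RG$-equivariant splitting $\widehat{\kinf^{(j)}}=\widehat{\kinf^{(j-1)}}\oplus X_j$; and since $\kinf^{(j)}/\mathcal{O}_{\kinf^{(j)}}\to\widehat{\kinf^{(j)}}/(\widehat{\kinf^{(j)}}\cap\ocp)$ is an $\RG$-equivariant isomorphism, we may work with the completions throughout.

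At the integral level, boundedness of $\theta^{(j)}$ gives a constant $N_j$ with $Y_{j-1}\oplus X_j^{\mathrm{int}}\subset Y_j\subset p^{-N_j}(Y_{j-1}\oplus X_j^{\mathrm{int}})$, where $Y_j:=\widehat{\kinf^{(j)}}\cap\ocp$. The canonical surjection
\[
\widehat{\kinf^{(j-1)}}/Y_{j-1}\,\oplus\,X_j/X_j^{\mathrm{int}}\twoheadrightarrow\widehat{\kinf^{(j)}}/Y_j
\]
therefore has kernel annihilated by $p^{N_j}$. Taking the long exact sequence of $\RG$-cohomology and combining with Lemma~\ref{lem:Tr}(ii) (which kills $\CC{\RG}{X_j/X_j^{\mathrm{int}}}$ by a fixed power of $p$), the map $\cc{q}{\RG}{\widehat{\kinf^{(j-1)}}/Y_{j-1}}\to\cc{q}{\RG}{\widehat{\kinf^{(j)}}/Y_j}$ has kernel and cokernel killed by a bounded power of $p$. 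Iterating over $j=1,\dotsc,d$ yields (i).

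For (ii), under Assumption~\ref{ass} we have $\RG=G_{\kinf/\kcycl}$, hence $\RG$ acts trivially on $\kcycl$, while the $\RC$-action on $\rho$ factors through $\R\twoheadrightarrow\RC$ and restricts trivially to $\RG$. Thus $\kcycl/\okcycl\otimes\rho$ is a trivial $\RG$-module, and the Koszul complex for $\RG\cong\mathbb{Z}_p^d$ gives
\[
\cc{q}{\RG}{\kcycl/\okcycl\otimes\rho}\cong\mathrm{Hom}_{\mathbb{Z}_p}(\wedge^q\RG,\,\kcycl/\okcycl\otimes\rho).
\]
The conjugation formula $g\rr{j}g^{-1}=\rr{j}^{\chi(g)}$ for $g\in\RC$ shows $\RG\cong\mathbb{Z}_p(1)^d$ as $\RC$-module, so $(\wedge^q\RG)^{\vee}\cong\wedge^q(\mathbb{Z}^d)(-q)$; absorbing the Tate twist into $\rho$ produces the claimed $\RC$-equivariant isomorphism.

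The main obstacle lies in (i): while the rational splittings from the $\theta^{(j)}$ are strict, the integer rings $Y_j$ only decompose up to a bounded $p$-power defect, and propagating this defect cleanly through the long exact cohomology sequence at each of the $d$ steps (without letting the killing exponent blow up) is the technical heart of the argument.
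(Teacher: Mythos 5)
Your proposal is correct and follows essentially the same route as the paper, which deduces (i) from the trace-splitting decomposition of $\kinf/\okinf$ into $\kcycl/\okcycl$ and the pieces $X_j/X_j^{\mathrm{int}}$ together with Lemma~\ref{lem:Tr}(ii), and (ii) from the Koszul-type computation for $\RG\cong\zp(1)^d$ acting trivially; you merely make explicit the bounded integral defect and the conjugation action that the paper leaves as ``a direct computation.'' Note that the step you flag as the technical heart is harmless: there are only $d$ stages, each contributing a fixed power of $p$, so the killing exponent cannot blow up.
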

\begin{proof}
(i) follows from the decomposition of discrete $\RG$-modules $\kinf/\okinf=\kcycl/\okcycl\oplus X_1/X_1^{\mathrm{int}}\oplus\dotsb\oplus X_d/X_d^{\mathrm{int}}$ and (ii) follows from a direct computation.
\end{proof}

Let $\theta^{(0)}:=\varinjlim_{n}{{[K(\mu_{p^n}):K]}^{-1}\mathrm{Tr}_{K(\mu_{p^n})/K}}:\kcycl\to K$ and $\mathbb{K}$ be the $p$-adic completion of $K(t_1^{p^{-\infty}},\dotsc,t_d^{p^{-\infty}})$. Note that $\mathbb{K}$ is a CDVF with a perfect residue field.
\begin{lem}[{\cite[Remark of (3.1), Proposition~7]{Tate}}]\label{lem:tr}
${ }$
\begin{enuroman}
\item $\theta^{(0)}$ is a bounded $K$-linear map.

\item Let $A\in GL_r(\ok)\subset GL_r(\widehat{\kcycl})$ such that $A\equiv 1\mod{\pi_K}$ and $1-A^{p^n}$ is invertible for all $n\in\mathbb{N}$. Then, for $\gamma\in\RC\setminus\{1\}$, $\gamma-A$ induces an automorphism of a $p$-adic Banach space on $\widehat{\kcycl}^r$.
\end{enuroman}
\end{lem}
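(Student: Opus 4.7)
Both parts are adaptations of classical arguments of Tate. For (i), I would follow the computation of \cite[\S 3.1]{Tate}: under Assumption~\ref{ass}, each step $K_n/K_{n-1}$ in the cyclotomic tower $K_n := K(\mu_{p^n})$ is totally ramified of degree $p$ from some $n_0$ onward, with explicit different. A direct estimate yields a uniform bound of the form $v_p([K_n:K]^{-1}\mathrm{Tr}_{K_n/K}(x)) \ge v_p(x) - c$ for $x \in \mathcal{O}_{K_n}$, with $c$ independent of $n$. Passing to the inductive limit and then to the $p$-adic completion extends $\theta^{(0)}$ continuously to a bounded $K$-linear map $\widehat{\kcycl} \to K$.

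For (ii), part (i) gives a topologically direct sum decomposition $\widehat{\kcycl} = K \oplus X$ with $X := \ker \theta^{(0)}$, preserved by the $\RC$-action (since $\theta^{(0)}$ is $\RC$-equivariant). Tensoring, $\widehat{\kcycl}^r = K^r \oplus X^r$ is stable under $\gamma$ and under $A$; these commute as $K$-linear operators because $A$ has entries in $K \subset \widehat{\kcycl}^{\RC}$. So it suffices to invert $\gamma - A$ on each summand. On $K^r$, $\gamma$ acts trivially, so $\gamma - A$ becomes $1 - A$, invertible by the hypothesis applied with $n = 0$.

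On $X^r$, since $\gamma$ and $A$ commute, the telescoping identity
\[
\gamma^{p^n} - A^{p^n} = (\gamma - A)\sum_{i+j=p^n-1}\gamma^i A^j
\]
holds inside the commutative subalgebra of bounded $K$-linear operators on $X^r$ generated by $\gamma$ and $A$; once $\gamma^{p^n} - A^{p^n}$ is shown to be a unit in that subalgebra, so is each commuting factor, hence $\gamma - A$. To invert $\gamma^{p^n} - A^{p^n}$ on $X^r$, write it as $(1 - A^{p^n}) + (\gamma^{p^n} - 1)$: the first summand is invertible by hypothesis, while the analogue of \cite[Proposition~7]{Tate} on $X$ gives $\|\gamma^{p^n}-1\|_{X^r} \to 0$ as $n \to \infty$. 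Choose $n$ large enough that $\|(1-A^{p^n})^{-1}\|\cdot\|\gamma^{p^n}-1\|_{X^r} < 1$ and invert via geometric series.

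The main obstacle is the quantitative balance between the decay of $\|\gamma^{p^n}-1\|_{X^r}$ and the possible growth of $\|(1-A^{p^n})^{-1}\|$. The hypothesis $A \equiv 1 \pmod{\pi_K}$ forces $1 - A^{p^n}$ to be divisible only by a controlled power of $p$ (via the binomial expansion of $(1+(A-1))^{p^n}-1$), so $\|(1-A^{p^n})^{-1}\|$ grows at most polynomially in $p^n$, while the classical cyclotomic estimate for $\gamma$ acting on $X$ yields strictly faster decay for $\|\gamma^{p^n}-1\|_{X^r}$. The two rates therefore match in Tate's favor, allowing the geometric series to converge.
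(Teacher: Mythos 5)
For (i), your route differs from the paper's and glosses over the one point that is actually delicate here. You assume that the steps $K(\mu_{p^{n+1}})/K(\mu_{p^n})$ are eventually totally ramified of degree $p$ with explicit different; when the residue field of $K$ is imperfect this is exactly the kind of statement that can fail (compare the appendix's discussion of Brinon's Lemme~1, where inseparable residue extensions in the cyclotomic tower break the argument), so it would have to be justified from Assumption~\ref{ass} rather than asserted. The paper avoids the issue entirely: it applies Tate's classical bound to the normalized trace of $\mathbb{K}_{\mathrm{cycl}}/\mathbb{K}$, where $\mathbb{K}$ is the completion of $K(t_1^{p^{-\infty}},\dotsc,t_d^{p^{-\infty}})$ and hence has \emph{perfect} residue field, and then restricts to $\kcycl\subset\mathbb{K}_{\mathrm{cycl}}$.

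For (ii) there is a genuine gap. The skeleton (telescoping identity, the hypothesis on $1-A^{p^n}$, splitting off the subspace where the group acts trivially) is right, but the analytic input is backwards. The claim $\|\gamma^{p^n}-1\|_{X^r}\to 0$ is false: for $m\gg n$ the element $x_m=\zeta_{p^m}-\theta^{(0)}(\zeta_{p^m})\in X$ has norm bounded independently of $m$, while $(\gamma^{p^n}-1)x_m=\zeta_{p^m}\bigl(\zeta_{p^m}^{\chi(\gamma)^{p^n}-1}-1\bigr)$ is a primitive $p^{m-n-O(1)}$-th root of unity minus $1$, up to a unit, so its valuation tends to $0$ as $m\to\infty$; hence $\|\gamma^{p^n}-1\|_{X}$ is bounded below by a positive constant independent of $n$. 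At the same time $A^{p^n}\to 1$, so $\|(1-A^{p^n})^{-1}\|\to\infty$. Thus the quantity you need to make $<1$ in fact blows up, and your geometric series never converges. The correct argument reverses the roles of the two terms: it is $\|A^{p^n}-1\|$ that tends to $0$ (binomial expansion of $(1+(A-1))^{p^n}$ using $A\equiv 1\bmod\pi_K$), while $(\gamma^{p^n}-1)^{-1}$ exists with a norm bound \emph{uniform in $n$} --- but only on the kernel $X_{m+n}$ of the normalized trace down to $K(\mu_{p^{m+n}})$ (where $\gamma$ topologically generates $G_{\kcycl/K(\mu_{p^m})}$), not on your fixed $X=\ker\theta^{(0)}$; indeed $\gamma^{p^n}-1$ kills the nonzero subspace $K(\mu_{p^{m+n}})\cap X$, so it is not even injective on $X$. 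One therefore decomposes $\widehat{\kcycl}^r=K(\mu_{p^{m+n}})^r\oplus X_{m+n}^r$ for $n$ large, inverts $\gamma^{p^n}-A^{p^n}=(\gamma^{p^n}-1)\bigl(1-(\gamma^{p^n}-1)^{-1}(A^{p^n}-1)\bigr)$ on $X_{m+n}^r$ by a geometric series, uses the hypothesis that $1-A^{p^n}$ is invertible on the first summand (this is precisely why the hypothesis is needed for all $n$ and not only $n=0$), and only then applies your telescoping identity to descend from $\gamma^{p^n}-A^{p^n}$ to $\gamma-A$.
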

\begin{proof}
(i) Since the normalized trace map $\theta^{(0)}:\mathbb{K}_{\mathrm{cycl}}\to\mathbb{K}$ is a bounded $\mathbb{K}$-linear map by \cite[Remark of (3.1)]{Tate}, we obtain the assertion by restricting $\theta^{(0)}$ to $\kcycl$.

(ii) The same argument of \cite[Proposition~7]{Tate} works.
\end{proof}

\begin{lem}[{\cite[Lemma~(3-6)]{Hyo}}]
Let $G$ be a profinite group and $V$ be a topological $G$-module with a $G$-submodule $V'$ such that $V=\varprojlim_{n}{V/p^nV'}$. Then, for $q\ge 0$, we have canonical exact sequence
\[
\xymatrix{0\ar[r]&\varprojlim^1_n{{\cc{q-1}{G}{V/p^nV'}}}\ar[r]&\cc{q}{G}{V}\ar[r]&\varprojlim_{n}\cc{q}{G}{V/p^nV'}\ar[r]&0.}
\]
\end{lem}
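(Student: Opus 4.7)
The plan is to reduce this to the standard Milnor $\varprojlim^1$ sequence for an inverse limit of cochain complexes, applied to the inverse system of continuous cochain complexes $C^{\bullet}_{\mathrm{cont}}(G, V/p^nV')$.

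First I would identify the cochain complex for $V$ with the inverse limit of the cochain complexes for the quotients. Concretely, continuous cochains $G^q \to V$ correspond bijectively, via postcomposition with the projections $V \to V/p^nV'$, to compatible systems of continuous cochains $G^q \to V/p^nV'$; this is precisely the hypothesis $V=\varprojlim_n V/p^n V'$ (the topology on $V$ being the inverse limit topology). Hence one obtains an identification of cochain complexes
\[
C^{\bullet}_{\mathrm{cont}}(G,V)\;\cong\;\varprojlim_n C^{\bullet}_{\mathrm{cont}}(G,V/p^nV').
\]

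Second, I would check that the inverse system of cochain complexes $\{C^{\bullet}_{\mathrm{cont}}(G,V/p^nV')\}_n$ has surjective transition maps in each degree. Given a continuous cochain $c_n\colon G^q\to V/p^nV'$, one needs to lift it to a continuous cochain $c_{n+1}\colon G^q\to V/p^{n+1}V'$. The surjection $V/p^{n+1}V'\twoheadrightarrow V/p^nV'$ is a quotient by the discrete submodule $p^nV'/p^{n+1}V'$, and any set-theoretic section of such a quotient can be used to define a continuous lift on a sufficiently small neighborhood of each point of $G^q$; combined with the compactness of $G^q$ (profiniteness of $G$), this yields a global continuous lift. This is exactly the surjectivity argument that makes the Milnor sequence available.

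Third, with surjective transition maps in place, I would invoke the standard homological fact: for an inverse system of cochain complexes $(C^{\bullet}_n)_n$ of abelian groups indexed by $\mathbb{N}$ with termwise surjective transition maps, one has $\varprojlim^1 C^q_n=0$ for every $q$, and there is a natural short exact sequence
\[
0\longrightarrow \varprojlim_n{}^1\, \mathrm{H}^{q-1}(C^{\bullet}_n)\longrightarrow \mathrm{H}^q\bigl(\varprojlim_n C^{\bullet}_n\bigr)\longrightarrow \varprojlim_n \mathrm{H}^q(C^{\bullet}_n)\longrightarrow 0
\]
(see e.g.\ Weibel, Theorem 3.5.8). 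Specializing to $C^{\bullet}_n=C^{\bullet}_{\mathrm{cont}}(G,V/p^nV')$ and using the identification of the first step yields the desired exact sequence.

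The main obstacle is the surjectivity of the transition maps of cochains in Step~2; everything else is formal. The point is that continuous lifting of a continuous cochain along a surjection of topological $G$-modules is not automatic in general, but here the kernel $p^nV'/p^{n+1}V'$ is a quotient of $V'$ sitting inside a $p$-adic inverse limit topology, and the profiniteness (hence compactness) of $G$ ensures that a local continuous section exists and can be glued. Once this is granted, the conclusion is an application of the Milnor sequence.
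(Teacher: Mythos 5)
The paper offers no proof of this lemma at all---it is quoted verbatim from Hyodo's Lemma~(3-6)---and your Milnor-sequence argument is the standard proof of that result, so there is nothing to contrast: identify $C^{\bullet}_{\mathrm{cont}}(G,V)$ with $\varprojlim_n C^{\bullet}_{\mathrm{cont}}(G,V/p^nV')$, check termwise surjectivity of the transition maps, and apply the $\varprojlim^1$ exact sequence for an inverse system of complexes. One small imprecision in your Step~2: what makes the lifting work is not that the kernel $p^nV'/p^{n+1}V'$ is discrete but that the target $V/p^nV'$ is discrete (the $p^nV'$ are open in $V$, which is the intended reading of $V=\varprojlim_n V/p^nV'$ and holds in the paper's application, where $V'$ is an open lattice); a continuous cochain into a discrete module is locally constant, and since $G^q$ is profinite one refines the cover to a finite clopen partition and lifts the finitely many values set-theoretically, which also legitimizes the gluing you invoke.
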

\begin{proof}[Proof of Proposition~\ref{prop:vani}]
Recall Construction~\ref{con:irr}. Applying the above lemma to $V=\kinf\otimes_{\ok}\rho_{\alpha},V'=\mathcal{O}_{\kinf}\otimes_{\ok}\rho_{\alpha}$, we have only to prove that $\cc{q}{\R}{\kinf/\mathcal{O}_{\kinf}\otimes_{\ok}\rho_{\alpha}}$ is killed by some power of $p$. By the Hochschild-Serre spectral sequence and Corollary~\ref{cor:hyo} and Remark~\ref{rem:fon}(ii), we have only to prove that $\cc{q}{\RC}{\kcycl/\okcycl\otimes\rho_{\alpha}}$ is killed by some power of $p$ for $q=0,1$. To prove this, we can replace $\RC$ by its open subgroup $\Gamma^{\mathrm{cycl}}_{\alpha}$.

Let $r=\#\alpha$, $m=[\RC:\Gamma^{\mathrm{cycl}}_{\alpha}]$. For a topological generator $\gamma_{\alpha}\in\Gamma^{\mathrm{cycl}}_{\alpha}$, let $A\in GL_{r}(\ok)$ be the matrix of $\gamma_{\alpha}^{-1}$ on $\rho_{\alpha}$ with respect to the basis 
\[
\gamma_0^0\otimes \overline{1},\dotsc,\gamma_0^{m-1}\otimes \overline{1},\gamma_0^0\otimes\overline{X},\dotsc,\gamma_0^{m-1}\otimes\overline{X},\dotsc,\gamma_0^0\otimes\overline{X}^{r-1},\dotsc,\gamma_0^{m-1}\otimes\overline{X}^{r-1}.
\]
Then, one can easily see that $A$ satisfies the conditions of Lemma~\ref{lem:tr}(ii). Hence, the assertion follows from the same argument as \cite[Proposition~8]{Tate}.
\end{proof}

\begin{cor}\label{cor:main}
For $V\in\GC$, $\cc{q}{G_K}{V}$ is finite dimensional over $K$ and vanishes for $q>d+1$ and we have
\[
\sum_{q\in\mathbb{N}}{(-1)^q\mathrm{dim}_K\cc{q}{G_K}{V}=0}.
\]
\end{cor}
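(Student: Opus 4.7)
The plan is to reduce the problem to computing the Galois cohomology of the individual Tate twists $\cp(n)$, via the decomposition of Corollary~\ref{cor:dec} and the vanishing Theorem~\ref{thm:van}.

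First I would apply Corollary~\ref{cor:dec} to write $V \cong V^{\mathbb{Z}} \oplus V^{\mathbb{Z}'}$ with $V^{\mathbb{Z}} \in \GCZ$ and $V^{\mathbb{Z}'} \in \GCZp$. Since continuous Galois cohomology commutes with finite direct sums, it suffices to verify the three assertions separately for each summand. By Theorem~\ref{thm:van}, $\CC{\gk}{V^{\mathbb{Z}'}} = 0$ in all degrees, so this factor contributes nothing, and the three statements reduce to the case $V \in \GCZ$.

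For $V \in \GCZ$, by the very definition of $\GCZ$, $V$ is an iterated extension of objects of the form $\cp(n)$ with $n \in \mathbb{Z}$. A devissage argument using the long exact sequence of continuous Galois cohomology, valid for short exact sequences of $\cp$-representations since these are finite-dimensional Banach $\cp$-vector spaces, reduces finite-dimensionality and vanishing in degree $>d+1$ to the case $V=\cp(n)$. The Euler characteristic is additive over short exact sequences whenever the three associated cohomologies are finite-dimensional and eventually zero, so it too reduces to the case of a single Tate twist.

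For $V = \cp(n)$, everything follows from Hyodo's Theorem~\ref{thm:hyo}. Finite-dimensionality of each $\cc{q}{\gk}{\cp(n)}$ and vanishing for $q > d+1$ are direct readings of (i) and (ii). The alternating sum is then a short combinatorial check: if $n \notin \{0, 1, \dotsc, d\}$ the cohomology vanishes identically; for $n=0$ the only contributions are $\dim\cc{0}{\gk}{\cp}=\dim\cc{1}{\gk}{\cp}=1$, giving $1-1=0$; for $1 \le n \le d$ only $q = n$ and $q = n+1$ contribute, both with dimension $\binom{d}{n}$, so the alternating sum is again zero. There is no serious obstacle here: the only thing to be slightly careful about is the existence of the long exact sequence in continuous cohomology, which is unproblematic since all representations in question are finite-dimensional $\cp$-Banach spaces.
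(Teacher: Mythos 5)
Your proof is correct and follows essentially the same route as the paper: d\'evissage to the irreducible constituents, Theorem~\ref{thm:van} to kill the non-integer (i.e.\ $\mathbb{Z}'$) part, and Hyodo's Theorem~\ref{thm:hyo} plus the binomial-coefficient cancellation for the Tate twists $\cp(n)$. The only cosmetic difference is that you first split $V$ via Corollary~\ref{cor:dec} before doing d\'evissage, whereas the paper appeals directly to the classification of irreducibles in Lemma~\ref{lem:fon}(iv); both amount to the same reduction.
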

\begin{proof}
By d\'{e}vissage, this follows from Lemma~\ref{lem:fon}(iv), Theorem~\ref{thm:hyo} and Theorem~\ref{thm:van}.
\end{proof}

\section{Proof of main theorem}\label{sec:Pro}
\begin{lem}\label{lem:ef}
The $\delta$-functor
\[
\CH{\Ll}{\DS{-}}:\GC\to\VEC{\kinf}
\]
is effaceable.
\end{lem}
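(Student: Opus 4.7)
The plan is to invoke Koszul–Leger (Lemma~\ref{lem:KL}) for the solvable Lie algebra $\Ll$ to get effaceability of $\CH{\Ll}{-}:\LL\to\VEC{\kinf}$, then transport this to $\GC$ via $\DSS$. The Lie algebra $\Ll$ is solvable because Section~\ref{sec:pad} presents it as an extension of the abelian $\La$ by the abelian ideal $\Lg$. For the transport, I would split $\GC\cong\GCZ\times\GCZp$ using Corollary~\ref{cor:dec} and handle the two factors separately.

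For $V\in\GCZp$, I expect the target cohomology to vanish outright, so effaceability is automatic. Every such $V$ is an iterated extension of irreducibles $\cp\otimes\V{\alpha}$ with $\alpha\in(\kbar/\gk)\setminus\mathbb{Z}$, and by Remark~\ref{rem:fon}(iii) together with Theorem~\ref{thm:sen}, $\DS{\cp\otimes\V{\alpha}}$ is a direct sum of copies of $\kinf[\xx{0}]/P_\alpha(\xx{0})$. The Chevalley–Eilenberg computation in the proof of Lemma~\ref{lem:caL} then gives $\coh{n}{\Ll}{\kinf[\xx{0}]/P_\alpha(\xx{0})}=0$ for all $n\geq 1$ when $\alpha\notin\mathbb{Z}$, since the nonvanishing cases require $\alpha=n$ or $\alpha=n-1$. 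A d\'evissage yields $\coh{n}{\Ll}{\DS{V}}=0$ for all $V\in\GCZp$ and $n\geq 1$.

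For $V\in\GCZ$ I would use the chain of equivalences $\GCZ\cong\GZ\cong\GGZ\cong\LLZ$ assembled from Theorem~\ref{thm:sen}, the final proposition of Section~\ref{sec:Dec}, and Proposition~\ref{prop:equ}. After the compatible base change from $K$ to $\kinf$, this identifies the essential image of $\DSS|_{\GCZ}$ in $\LL$ with the $\mathbb{Z}$-weight subcategory $\LZ$. Koszul–Leger in $\LL$ produces an embedding $\DS{V}\hookrightarrow D'$ with $\coh{n}{\Ll}{D'}=0$, but $D'$ a priori escapes $\LZ$; this is the main technical obstacle.

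I would resolve this by establishing the analogue of Corollary~\ref{cor:dec} for $\LL$, namely the decomposition $\LL\cong\LZ\times\LZp$, whose proof is formally identical, substituting Lemma~\ref{lem:caL} for Corollary~\ref{cor:calirred} to verify the vanishing of the cross-Ext groups. Projecting $D'$ onto its $\LZ$-component $D'_{\mathbb{Z}}$ still yields an embedding of $\DS{V}$ (since $\DS{V}\in\LZ$), and $\coh{n}{\Ll}{D'_\mathbb{Z}}$ remains zero as a direct summand of the vanishing $\coh{n}{\Ll}{D'}$. Since $D'_\mathbb{Z}\in\LZ$ lies in the essential image of $\DSS|_{\GCZ}$, it lifts through the equivalence to the required $W\in\GCZ$ with $V\hookrightarrow W$, effacing $\coh{n}{\Ll}{\DS{V}}$ and completing the proof.
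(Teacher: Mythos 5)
Your decomposition $\GC\cong\GCZ\times\GCZp$ and your treatment of the $\mathbb{Z}'$-part (outright vanishing of $\coh{n}{\Ll}{\DS{V}}$ for $n\ge 1$ via Remark~\ref{rem:fon}(iii) and the Chevalley--Eilenberg computation) agree with the paper. The gap is in the last step of your $\mathbb{Z}$-part: you assert that the essential image of $\DSS|_{\GCZ}$ is all of $\LZ$, so that the effacing object $D'_{\mathbb{Z}}$ lifts to some $W\in\GCZ$. But the paper only proves that $\partial$ is fully faithful \emph{after tensoring the Hom-groups with $\kinf$} (Lemma~\ref{lem:sen}(iv)) and injective on isomorphism classes (Lemma~\ref{lem:sen}(v)); essential surjectivity onto $\LZ$ is neither claimed nor true in general. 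Concretely, for $d\ge 2$: $\mathrm{Ext}^1_{\LL}(\kinf,\kinf[\xx{0}]/(\xx{0}-1))$ is a $d$-dimensional $\kinf$-vector space with basis $[\delta_1],\dotsc,[\delta_d]$, whereas the classes arising from $\G$ span only the $d$-dimensional $K$-subspace generated by the $\partial[\ee^{s_j}]=[\delta_j]$ (proof of Lemma~\ref{lem:extone}, using $\cc{1}{\gk}{\cp(-1)}=0$ to rule out extensions in the other order). Since a nonsplit extension with class $\mu=\sum\mu_j[\delta_j]$ determines, up to isomorphism, exactly the line $\kinf\mu$ (the two irreducible constituents are non-isomorphic, so their automorphisms are scalars), any $\mu$ whose line is not $K$-rational, e.g.\ $[\delta_1]+\lambda[\delta_2]$ with $\lambda\in\kinf\setminus K$, gives an object of $\LZ$ that is not isomorphic to any $\DS{V}$. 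So your $D'_{\mathbb{Z}}$ need not descend, and the required $W$ need not exist.

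The repair is to apply Koszul--Leger one level down, over $K$ instead of over $\kinf$ --- which is what the paper does. Since Lie algebra cohomology commutes with extension of the base field, the chain of genuine equivalences $\GCZ\cong\GZ\cong\GGZ\cong\LLZ$ identifies $V\mapsto\coh{n}{\Ll}{\DS{V}}$ with $D\mapsto\kinf\otimes_K\coh{n}{\Lk}{D}$ on $\LLZ$, so it suffices to efface the $\delta$-functor $\CH{\Lk}{-}:\LLZ\to\VEC{K}$. Lemma~\ref{lem:KL} applied to the solvable $\Lk$ effaces in $\LK$, and the projection $p^{\mathbb{Z}}$ of the already-established decomposition $\LK\cong\LLZ\times\LLZp$ (Corollary~\ref{cor:dec}) keeps the effacing monomorphism inside $\LLZ$, exactly as in your projection argument; the effacing object then lifts to $\GCZ$ automatically because every functor in the chain is an equivalence. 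Your proposed decomposition $\LL\cong\LZ\times\LZp$ is correct but becomes unnecessary once the argument is run over $K$.
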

\begin{proof}
We have an equivalence $(p^{\mathbb{Z}},p^{\mathbb{Z}'}):\GC\cong\GCZ\times\GCZp$ (Corollary~\ref{cor:dec}) and for $V\in\GCZp$, we have $\DS{V}\in\LZp$ by Lemma~\ref{lem:fon}(iii) and $\CH{\Ll}{\DS{V}}=0$ by Lemma~\ref{lem:caL}(i),(ii). So we have only to see the effaceability for $V\in\GCZ$.
By the commutative diagram
\[\xymatrix{
\GCZ\ar[d]^{\DSS}\ar[r]^{((-)^H)^{\mathrm{f}}}_{\cong}&\GZ\ar[r]^{(-)_0}_{\cong}&\GGZ\ar[d]^{\partial}_{\cong}\\
\LZ&&\LLZ\ar[ll]^{\kinf\otimes},
}\]
we have only to check the effaceability of the $\delta$-functor $\CH{\Lk}{-}:\LLZ\to\VEC{K}$. This follows from the equivalence $(p^{\mathbb{Z}},p^{\mathbb{Z}'}):\LK\cong\LLZ\times\LLZp$ (Corollary~\ref{cor:dec}) and Lemma~\ref{lem:KL}, since $\LLl$ is solvable. 
\end{proof}

Let $\iota$ be the morphism of $\delta$-functors from $\GC$ to $\mathrm{Vec}_{\kinf}$
\[
\CH{\Ll}{\DS{-}}\to\kinf\otimes\CC{\gk}{-}
\]
induced by Lemma~\ref{lem:sen}(iv) and Lemma~\ref{lem:ef}.

\begin{lem}\label{lem:cup}
$\iota$ is compatible with the cup product.
\end{lem}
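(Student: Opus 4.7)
The plan is to fix one of the two cup-product arguments and reduce the desired compatibility $\iota(u \cup v) = \iota(u) \cup \iota(v)$ to a degree-zero comparison via the effaceability of the source $\delta$-functor. Fix $V_1 \in \GC$ and a class $u \in \coh{p}{\Ll}{\DS{V_1}}$, and for $V_2 \in \GC$ and $q \ge 0$ consider the two natural transformations
\[
\alpha^q(v) := \iota(u \cup v), \qquad \beta^q(v) := \iota(u) \cup \iota(v),
\]
both valued in $\kinf \otimes \cc{p+q}{\gk}{V_1 \otimes V_2}$ (we use $\DS{V_1 \otimes V_2} \cong \DS{V_1} \otimes \DS{V_2}$ from Lemma~\ref{lem:sen}(i)). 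Compatibility of $\iota$ with the cup product amounts to $\alpha^\bullet = \beta^\bullet$ for every such $V_1$ and $u$.

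Both $\alpha^\bullet$ and $\beta^\bullet$ are morphisms of (shifted) $\delta$-functors in the variable $V_2$, up to the common Leibniz sign $(-1)^p$. Indeed, for a short exact sequence in $\GC$, applying the exact Tannakian functor $\DSS$ yields a short exact sequence in $\RL$, and the standard rule $\delta(u \cup -) = (-1)^p\, u \cup \delta(-)$ for cup product with a fixed class of degree $p$ holds for both Lie-algebra cohomology (via the Chevalley--Eilenberg complex) and continuous Galois cohomology (via continuous cochains). Since $\iota$ itself is a morphism of $\delta$-functors, post-composition yields the claim for $\alpha^\bullet$, and the analogous statement on the Galois side gives it for $\beta^\bullet$. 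The two signs are identical, so $\alpha^\bullet - \beta^\bullet$ is an honest morphism of $\delta$-functors $\coh{\bullet}{\Ll}{\DS{-}} \to \kinf \otimes \cc{p+\bullet}{\gk}{V_1 \otimes -}$.

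By Lemma~\ref{lem:ef}, the source $\coh{\bullet}{\Ll}{\DS{-}}$ is effaceable, so any morphism of $\delta$-functors out of it is determined by its degree-zero component. It thus suffices to verify $\alpha^0 = \beta^0$. For $v \in \coh{0}{\Ll}{\DS{V_2}}$, the isomorphism $\DS{V_2}^{\Ll} \cong \kinf \otimes_K V_2^{\gk}$ of Lemma~\ref{lem:sen}(iv) identifies $v$ with $\iota^0(v)$, and cup product with a degree-$0$ class is just the functorial pushforward along the morphism $\mathrm{id}_{\DS{V_1}} \otimes f_v$ in $\RL$ (resp.\ $\mathrm{id}_{V_1} \otimes g_v$ in $\GC$), where $f_v : \kinf \to \DS{V_2}$ and $g_v : \cp \to V_2$ are the morphisms corresponding to $v$ and to $\iota^0(v)$. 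Under $\DSS$ these two morphisms correspond, and the naturality of $\iota$ in its coefficient argument then gives $\iota(u \cup v) = \iota(u) \cup \iota(v)$.

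The main obstacle is bookkeeping rather than conceptual: one must confirm that the $(-1)^p$ Leibniz sign is produced identically by both cohomology theories under the matching cup-product conventions, and that the degree-zero cup product is really the functorial pushforward just described so that naturality of $\iota$ applies directly. Once these routine points are settled, effaceability closes the argument with no further input.
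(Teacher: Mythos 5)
Your argument is correct and is essentially the paper's own proof: the paper reduces to degree~$0$ "by dimension shift" and then invokes the tensor-compatibility of $\DSS$, which is exactly what you do, with the dimension shift made precise via the universality of the effaceable $\delta$-functor in the second variable. The sign bookkeeping and the identification of the degree-$0$ cup product with a pushforward (using Lemma~\ref{lem:sen}(iv) to transport the morphism $\kinf\to\DS{V_2}$ back to $\GC$) are routine, as you say.
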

\begin{proof}
By dimension shift, we have only to check at degree~$0$. In this case, the lemma follows from the fact that $\DSS$ is compatible with tensor product (Theorem~\ref{thm:sen} and Lemma~\ref{lem:sen}(i)).
\end{proof}

\begin{lem}\label{lem:degone}
$\iota$ is an isomorphism at degree~$1$.
\end{lem}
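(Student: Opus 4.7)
The plan is to split along $\GC\cong\GCZ\times\GCZp$ from Corollary~\ref{cor:dec}, dispose of the $\GCZp$ factor by direct vanishing, and for $V\in\GCZ$ read off the isomorphism from the commutative diagram at the end of Section~\ref{sec:Dec}.

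On $\GCZp$, both sides of $\iota_1$ vanish: the target by Theorem~\ref{thm:van}, and the source because $\DSS$ carries $\GCZp$ into $\LZp$ (a consequence of Lemma~\ref{lem:fon}(iii)) and Lie algebra cohomology on $\LZp$ vanishes by Lemma~\ref{lem:caL}(i),(ii) combined with d\'{e}vissage (applied after base change from $\Lk$ to $\Ll$). So $\iota_1$ is trivially an isomorphism on $\GCZp$, and we reduce to $V\in\GCZ$.

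Let $W_0\in\GGZ$ be the object corresponding to $V$ under the Tannakian equivalences $\GCZ\cong\GZ\cong\GGZ$ provided by Theorem~\ref{thm:sen} and the final proposition of Section~\ref{sec:Dec}, so that $(V^H)^{\mathrm{f}}=\kinf\otimes_K W_0$ and $\DS{V}=\kinf\otimes_K\PP{W_0}$. Apply to $W_0$ the commutative diagram of the lemma following Lemma~\ref{lem:extone}. In that diagram, the top horizontal $\kinf\otimes\partial$ is an isomorphism by the equivalence $\partial:\GGZ\cong\LLZ$ of Proposition~\ref{prop:equ}; the right vertical is an isomorphism by flat base change of Lie algebra cohomology along $K\to\kinf$; and the left vertical is the base change along $K\to\kinf$ of the isomorphism of Proposition~\ref{prop:H1}. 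By commutativity, the bottom horizontal $\kinf\otimes\partial$ is therefore an isomorphism, and via Theorem~\ref{thm:sen} it identifies with the canonical map $\kinf\otimes_K\cc{1}{\gk}{V}\to\coh{1}{\Ll}{\DS{V}}$.

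Finally, I would identify this canonical isomorphism with $\iota_1^{-1}$ by universality: the explicit construction and $\iota$ both yield morphisms of $\delta$-functors $\CH{\Ll}{\DS{-}}\to\kinf\otimes\CC{\gk}{-}$ (at the relevant degrees) that agree at degree~$0$ with the isomorphism of Lemma~\ref{lem:sen}(iv), and the effaceable source $\delta$-functor (Lemma~\ref{lem:ef}) admits a unique such extension, so the two must coincide. The main obstacle is this last compatibility check, but it reduces to the naturality of the auxiliary results used above (Sen's theorem, Proposition~\ref{prop:H1}, flat base change, and Proposition~\ref{prop:equ}), which ensures that the chain of isomorphisms commutes with connecting maps of short exact sequences in $\GCZ$.
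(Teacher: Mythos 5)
Your proposal is correct, and it reaches the same structural endpoint as the paper --- namely, that the canonical map $\xi:\kinf\otimes\mathrm{Ext}^1_{\GC}(\cp,V)\to\coh{1}{\Ll}{\DS{V}}$ induced by $\DSS$ is bijective, after which effaceability forces $\iota_1=\xi_1^{-1}$ --- but you establish the bijectivity of $\xi_1$ by a different route. The paper does not invoke Proposition~\ref{prop:H1} or the decomposition of Corollary~\ref{cor:dec} at this point: it quotes only the \emph{injectivity} of $\xi_1$ (Lemma~\ref{lem:extone}), and obtains surjectivity for free from the identity $\xi\circ\mu=\mathrm{id}$, which holds everywhere because it holds at degree~$0$ and the source is effaceable. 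You instead run the commutative diagram following Lemma~\ref{lem:extone} backwards: since the top horizontal (Proposition~\ref{prop:equ}), right vertical (base change of Lie algebra cohomology) and left vertical (Proposition~\ref{prop:H1}) are isomorphisms, so is the bottom map. This is not circular --- Proposition~\ref{prop:H1} is established before this point --- but note that it is the paper's own derivation of Proposition~\ref{prop:H1} from Lemma~\ref{lem:extone} read in reverse, so the two arguments ultimately rest on the same computation with $\log\chi$ and $\ee^{s_j}$. What your route buys is that you never need the section $\mu$ for surjectivity; what it costs is your final ``compatibility check'': to apply uniqueness of morphisms out of an effaceable $\delta$-functor you must know that your explicitly constructed map is a morphism of $\delta$-functors in degrees $0$--$1$, i.e.\ that the entire chain $\GCZ\to\GZ\to\GGZ\to\LLZ$ and its inverse commute with connecting homomorphisms. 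The paper sidesteps this by composing in the other order ($\xi\circ\mu$ is an endomorphism of the effaceable $\delta$-functor equal to $\mathrm{id}$ in degree~$0$, hence equal to $\mathrm{id}$), so that only the canonical $\xi$ --- manifestly a $\delta$-morphism since $\DSS$ is exact --- needs to be checked, together with the one point the paper is explicit about and you gloss over: that the identification $\mathrm{Ext}^1_{\GC}(\cp,-)\cong\cc{1}{\gk}{-}$ of cocycles with Yoneda extensions is compatible with connecting maps. I would recommend adopting that ordering in your last step; with it, your argument is complete.
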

\begin{proof}
Let $\mu$ be the morphism of $\delta$-functors from $\GC$ to $\mathrm{Vec}_{\kinf}$
\[
\mu:\CH{\Ll}{\DS{-}}\to\kinf\otimes\mathrm{Ext}_{\GC}(\cp,-)
\]
induced by the effaceability of $\CH{\Ll}{\DS{-}}$. Note that the functorial isomorphism $\mathrm{Ext}_{\GC}^{q}(\cp,-)\to\cc{q}{\gk}{-}$ for $q=0,1$ is compatible with the connecting homomorphisms, that is, for an exact sequence in $\GC$
\[\xymatrix{
0\ar[r]&V'\ar[r]&V\ar[r]&V''\ar[r]&0,
}\]
there is a commutative diagram
\[\xymatrix{
\mathrm{Hom}_{\GC}(\cp,V'')\ar[r]^{\delta}\ar[d]^{\mathrm{can.}}&\mathrm{Ext}^{1}_{\GC}(\cp,V')\ar[d]^{\mathrm{can.}}\\
\cc{0}{\gk}{V''}\ar[r]^{\delta}&\cc{1}{\gk}{V'}.
}\]
where $\delta$ denote the connecting homomorphisms. So, we have only to prove that $\mu$ is isomorphism at degree $1$. Let $\xi:\kinf\otimes\mathrm{Ext}_{\GC}(\cp,-)\to\CH{\Ll}{-}$ be the morphism of $\delta$-functors defined by the composition $\kinf\otimes\mathrm{Ext}_{\GC}(\cp,-)\stackrel{\kinf\otimes\DSS}{\to}\mathrm{Ext}_{\LL}(\kinf,\DS{-})\stackrel{\mathrm{can.}}{\to}\mathrm{Ext}_{U\Ll}(\kinf,\DS{-})=\CH{\Ll}{\DS{-}}$. Then, $\xi\circ\mu$ is identity at degree $0$, which implies $\xi\circ\mu=\mathrm{id}$. Since $\xi$ is injective at degree $1$ (Lemma~\ref{lem:extone}), we obtain the assertion.
\end{proof}

\begin{proof}[Proof of Theorem~\ref{thm:Main}]
We will prove that $\iota$ is an isomorphism. The case degree $\le 1$ is already proved. By d\'{e}vissage and Lemma~\ref{lem:caL}(ii), Theorem~\ref{thm:van}, we can assume $V=\cp(n),n\in\mathbb{Z}$ and in this case, the assertion follows from Theorem~\ref{thm:hyo}(iv), Lemma~\ref{lem:caL}(iv) and Lemma~\ref{lem:cup}.
\end{proof}

\begin{rem}
Define the functor $D:\GC\to\LK$ as the composite
\[\xymatrix{
\GC\ar[r]^{p^{\mathbb{Z}}}&\GCZ\ar[r]^{((-)^H)^{\mathrm{f}}}&\GZ\ar[r]^{(-)_0}&\GGZ\ar[r]^{\partial}&\LLZ\ar[r]^{\mathrm{can.}}&\LK.
}\]
Then, the $\delta$-functor $\CH{\Lk}{D(-)}:\GC\to\VEC{K}$ is effaceable by the same argument as Lemma~\ref{lem:ef} and we have a canonical isomorphism $\cc{0}{\gk}{V}\to\coh{0}{\Lk}{D(V)}$ by the construction. Hence, we have a canonical isomorphism of $\delta$-functors $\CC{\gk}{-}\to\CH{\Lk}{D(-)}$ (note that the $\delta$-functor $\CC{\gk}{-}:\GC\to\VEC{K}$ is effaceable by Theorem~\ref{thm:Main} and Lemma~\ref{lem:ef}).
\end{rem}

\section{Appendix (Errata on \cite{Bri})}\label{sec:App}
Let the notation be as in \cite{Bri}. In this appendix, we point out the following errors in the paper \cite{Bri} and give the argument to fix them.

$\bullet$ p.795, l.16-17. $\dotsb$ $\Gamma'$ s'identifie \`{a} un sous-groupe (distingu\'{e}). On note $\kinf=K_{(\infty)}^{\Gamma'_{\mathrm{tors}}}$, $\dotsc$ 

First, $\Gamma'$ is not distinguished in $G_{K_{(\infty)}/K}$: $\Gamma'$ is not even normal since $K_{(\infty)}^{\Gamma'}=K(t_1^{p^{-\infty}},\dotsc,t_d^{p^{-\infty}})$ is not Galois over $K$. Second, this is more serious, $\Gamma'_{\mathrm{tors}}$ is not normal in $G_{K_{(\infty)/K}}$ in general: $\Gamma'_{\mathrm{tors}}$ is normal if and only if $\Gamma'_{\mathrm{tors}}=1$, that is, $\zeta\in K$ where $\zeta=\zeta_p$ if $p$ is odd, $\zeta=\zeta_{p^2}$ if $p$ is even. So, we cannot put $\Gamma=\mathrm{Gal}(\kinf/K)$ as in l.19 in general.

$\bullet$ p.798, Lemme~1.

In the proof, the author seems to assume that the residue field extension of $K_{m+1}^{(i)}/K_{m}^{(i)}$ is generated by the image of $t_{i}^{p^{-(m+1)}}$, but since the residue field extension of $K(\mu_{p^{m+1}})/K(\mu_{p^m})$ can be inseparable, this does not happen in general. So, the proof of Lemme~1 does not work in general.

To fix these problems, we put the following assumption, similar to \cite[(0-5)]{Hyo}, in $\S 2,\S 3$. Recall that the canonical subfield $\kcan$ of $K$ is the algebraic closure of the fractional of the Witt ring of the maximal perfect subfield of $k$ in $K$.
\begin{ass}\label{ass}
$\zeta\in K$ and the ramification index of $K/\kcan$ is $1$.
\end{ass}
Note that under this assumption, the original proof of Lemme 1 works. Since the original proofs of Th\'{e}or\`{e}me~1, Th\'{e}or\`{e}me~2 in $\S\S 3.1$ are verified only under this assumption, we add the following proofs of general case.
\begin{proof}[Proof of Th\'{e}or\`{e}me~1]
Just put $\mathbb{K}$ as the $p$-adic completion of $K(t_1^{p^{-\infty}},\dotsc,t_h^{p^{-\infty}})$, then the original proof works.
\end{proof}

\begin{dfn}
Let $K$ be a CDVF, $L/K$ be a Galois extension. Let $\mathrm{Rep}_{\ast}{\ast}$ as in notation and convention. For $V\in\mathrm{Rep}_{\widehat{L}}G_{L/K}$, let $V^{\mathrm{f}}$ be the sub-$L$-vector space of $V$ generated by $v\in V$ such that $G_{L/K}\cdot (Kv)$ is finite dimensional over $K$. We call $L/K$ satisfies $\ast$ if
\[
\mathrm{Rep}_{L}G_{L/K}\to\mathrm{Rep}_{\widehat{L}}G_{L/K};V\mapsto\widehat{L}\otimes_{L}V
\]
is an equivalence with a quasi-inverse $V\mapsto V^{\mathrm{f}}$.
\end{dfn}

\begin{lem}
Let $K$ be a CDVF and $L/M/K$ be algebraic extensions with $L/K$ Galois.
\begin{enuroman}
\item If $L/M$ is finite and $M/K$ is Galois, then
\[
L/K\text{ satisfies }\ast\Leftrightarrow M/K\text{ satisfies }\ast.
\]

\item If $M/K$ is finite, then
\[
L/K\text{ satisfies }\ast\Leftrightarrow L/M\text{ satisfies }\ast.
\]
\end{enuroman}
\end{lem}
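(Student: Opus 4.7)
The plan is to treat the two parts separately, using Hilbert~90 for (i) and (co)induction in the semi-linear setting for (ii).

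For (i), I would use Hilbert's Theorem~90 applied to the finite Galois extension $L/M$ (equivalently, to $\widehat{L}/\widehat{M}$, which is still finite Galois with the same group $H := G_{L/M}$). Taking $H$-fixed points should produce compatible equivalences
\[
\mathrm{Rep}_{L} G_{L/K}\xrightarrow{\sim}\mathrm{Rep}_{M} G_{M/K},\qquad \mathrm{Rep}_{\widehat{L}} G_{L/K}\xrightarrow{\sim}\mathrm{Rep}_{\widehat{M}} G_{M/K},
\]
with quasi-inverses given by base change. One then needs to check that these equivalences commute with the completion functor via $(\widehat{L}\otimes_{L}V)^{H}=\widehat{M}\otimes_{M}V^{H}$, and with $(-)^{\mathrm{f}}$ via $(V^{\mathrm{f}})^{H}=(V^{H})^{\mathrm{f}}$ (the point being that for $v\in V^{H}$ the $G_{L/K}$- and $G_{M/K}$-orbits of $v$ span the same $K$-subspace). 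This will make the $\ast$-conditions for $L/K$ and $M/K$ formally equivalent.

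For (ii), set $H := G_{L/M}\subset G := G_{L/K}$, of finite index $[M:K]$. The easy direction $L/M \Rightarrow L/K$ will rest on the identity that $V^{\mathrm{f}}$ computed with respect to $G$ equals $V^{\mathrm{f}}$ computed with respect to $H$, for any $V\in\mathrm{Rep}_{\widehat{L}} G$: the nontrivial inclusion is that if the $H$-orbit of $v$ spans a finite-dimensional $K$-subspace $W$, then the $G$-orbit lies in $\sum_{i} g_{i}(W)$ over finitely many coset representatives $g_{i}$ of $G/H$, still finite-dimensional over $K$. Applying the hypothesis to $V|_{H}$ then gives $V=\widehat{L}\otimes_{L}V^{\mathrm{f}}$, which is $\ast$ for $L/K$.

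For the converse $L/K \Rightarrow L/M$, given $V\in\mathrm{Rep}_{\widehat{L}} H$, I would form the induced representation $\widetilde{V}:=(\widehat{L}\#G)\otimes_{\widehat{L}\#H} V$ as a module over the twisted group algebra $\widehat{L}\#G$ (with product $(l_{1}g_{1})(l_{2}g_{2})=l_{1}g_{1}(l_{2})\,g_{1}g_{2}$), in order to make sense of induction in the semi-linear setting. As an $\widehat{L}$-vector space $\widetilde{V}$ has dimension $[M:K]\dim_{\widehat{L}}V$. By hypothesis $\widetilde{V}$ admits an $L$-form $\widetilde{V}^{\mathrm{f}}$, and Mackey decomposition should realize $V$ as a direct $H$-summand of $\widetilde{V}|_{H}$ via a projection idempotent intrinsic to the $G$-action (hence defined already over $L$). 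Applying this idempotent to $\widetilde{V}^{\mathrm{f}}$ would then yield the desired $L$-form of $V$. The hard part will be making sense of the induced representation $\widetilde{V}$ and its Mackey decomposition in the semi-linear setting, verifying the continuity of the $G$-action and interpreting the projection idempotent over $L$; once the twisted group algebra framework is set up correctly, the rest will be a formal consequence of Hilbert~90 and standard adjoint functor arguments.
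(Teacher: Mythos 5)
Your proposal is correct and follows essentially the same route as the paper: Hilbert~90 for (i), the commutation of restriction along $G_{L/K}\supset G_{L/M}$ with $(-)^{\mathrm{f}}$ for the easy implication of (ii), and induction via the semilinear group ring $\widehat{L}\langle G_{L/K}\rangle\otimes_{\widehat{L}\langle G_{L/M}\rangle}(-)$ for the hard one. The only divergence is the last step: the paper sidesteps the Mackey/idempotent splitting by applying left-exactness of $(-)^{\mathrm{f}}$ and the snake lemma to $0\to V\to\widehat{L}\langle G_{L/K}\rangle\otimes_{\widehat{L}\langle G_{L/M}\rangle}V\to V'\to 0$, whereas your retract argument works equally well because the trivial coset is fixed under left multiplication by $G_{L/M}$, so $V$ is a canonical $G_{L/M}$-stable direct summand of the restricted induced module and functoriality of $(-)^{\mathrm{f}}$ finishes the proof.
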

\begin{proof}
(i) This follows from the equivalences
\[
(-)^{G_{L/M}}:\mathrm{Rep}_{L}{G_{L/K}}\to\mathrm{Rep}_{M}{G_{M/K}},\ \mathrm{Rep}_{\widehat{L}}{G_{L/K}}\to\mathrm{Rep}_{\widehat{M}}{G_{M/K}}
\]
(Hilbert 90) and the commutativity of $(-)^{\mathrm{f}}$ and $(-)^{G_{L/M}}$.

(ii) Since the restriction $G_{L/K}\to G_{L/M}$ commutes with $(-)^{\mathrm{f}}$, the RHS implies the LHS. Assume $L/K$ satisfies $\ast$. Let $\widehat{L}\langle G_{L/K}\rangle,\widehat{L}\langle G_{L/M}\rangle$ be the semilinear group rings of $G_{L/K},G_{L/M}$ over $\widehat{L}$. For $V\in\mathrm{Rep}_{\widehat{L}}G_{L/M}$, let $V':=(\widehat{L}\langle G_{L/K}\rangle\otimes_{\widehat{L}\langle G_{L/M}\rangle} V)|_{G_{L/M}}/V\in\mathrm{Rep}_{\widehat{L}}G_{L/M}$. Then, by the short exact sequence in $\mathrm{Rep}_{\widehat{L}}G_{L/M}$
\[\xymatrix{
0\ar[r]&V\ar[r]&{\widehat{L}\langle G_{L/K}\rangle}\otimes_{\widehat{L}\langle G_{L/M}\rangle}{V}\ar[r]&V'\ar[r]&0,
}\]
we have, by the left-exactness of $(-)^{\mathrm{f}}$ and the snake lemma,
\begin{gather*}
\ker{(\widehat{L}\otimes_{L}(V^{\mathrm{f}})\to V)}=0\\
\mathrm{coker}(\widehat{L}\otimes_{L}(V^{\mathrm{f}})\to V)\subset\ker{(\widehat{L}\otimes_{L}(V')^{\mathrm{f}}\to V')}.
\end{gather*}
This implies that the canonical map $\widehat{L}\otimes_{L}(V^{\mathrm{f}})\to V$ is an isomorphism.

\end{proof}

\begin{proof}[Proof of Th\'{e}or\`{e}me~2]
Follow the notation of \cite{Bri}. Let $K_0$ be a Cohen subfield of $K$ such that $K_0$ contains $t_1,\dotsc,t_h$. (Such a $K_0$ exists by \cite[Theorem~2.1]{Whi}.) Then, we have $(K_0)_{\infty}\subset\kinf$ and $(K_0)_{\infty}/K_0(\zeta)$ satisfies $\ast$. Let $L/K$ be a Galois closure of $\kinf/K_0$. By applying the above lemma to the diagram of algebraic extensions
\[\xymatrix{
(K_0)_{\infty}\ar@{-}[r]\ar@{-}[d]&\kinf\ar@{-}[r]\ar@{-}[dd]&L\\
K_0(\zeta)\ar@{-}[d]&&\\
K_0\ar@{-}[r]&K,
}\]
we obtain the assertion.
\end{proof}

\end{document}